\definecolor{ffqqqq}{rgb}{1,0,0}
\definecolor{qqffqq}{rgb}{0,1,0}
\definecolor{ffffff}{rgb}{1,1,1}
\definecolor{ttqqqq}{rgb}{0.07,0.07,0.07}
\colorlet{ColorGray}{gray!30}
\tikzset{cross/.style={cross out, draw=black, minimum size=2*(#1-\pgflinewidth), inner sep=0pt, outer sep=0pt}, cross/.default={1pt}}
\DeclareMathSymbol{\leqslant}{\mathalpha}{AMSa}{"36} 
\DeclareMathSymbol{\geqslant}{\mathalpha}{AMSa}{"3E} 
\DeclareMathSymbol{\eset}{\mathalpha}{AMSb}{"3F}     
\renewcommand{\b}{\beta}
\def\1{\ifmmode {1\hskip -3pt \rm{I}} \else {\hbox {$1\hskip -3pt \rm{I}$}}\fi}
\newcommand{\var}{\operatorname{Var}}
\newcommand{\tc}{\thinspace |\thinspace}
\newcommand{\id}{\mathds{1}}
\newcommand{\D}{\Delta}
\renewcommand{\b}{\beta}
\renewcommand{\l}{\lambda}
\renewcommand{\L}{\Lambda}
\renewcommand{\l}{\lambda}
\renewcommand{\a}{\alpha}
\renewcommand{\d}{\delta}
\renewcommand{\t}{\tau}
\newcommand{\g}{\gamma}
\newcommand{\G}{\Gamma}
\newcommand{\e}{\varepsilon}
\newtheorem*{theorem*}{Theorem}
\newtheorem{maintheorem}{Theorem}
\newtheorem{maincorollary}{Corollary}
\newtheorem*{question*}{Question}
\newtheorem*{remark*}{Remark}
\newcommand{\N}{\mathbb N}
\newcommand{\cA}{\ensuremath{\mathcal A}}
\newcommand{\cB}{\ensuremath{\mathcal B}}
\newcommand{\cD}{\ensuremath{\mathcal D}}
\newcommand{\cF}{\ensuremath{\mathcal F}}
\newcommand{\cG}{\ensuremath{\mathcal G}}
\newcommand{\cH}{\ensuremath{\mathcal H}}
\newcommand{\cL}{\ensuremath{\mathcal L}}
\newcommand{\cT}{\ensuremath{\mathcal T}}
\newcommand{\bbE}{{\ensuremath{\mathbb E}} }
\newcommand{\bbN}{{\ensuremath{\mathbb N}} }
\newcommand{\bbP}{{\ensuremath{\mathbb P}} }
\newcommand{\bbR}{{\ensuremath{\mathbb R}} }
\newcommand{\bbZ}{{\ensuremath{\mathbb Z}} }
\newcommand{\Z}{{\ensuremath{\mathbb Z}} }
\let\a=\alpha \let\b=\beta   \let\d=\delta  \let\e=\varepsilon
 \let\g=\gamma     \let\k=\kappa  \let\l=\lambda
      \let\o=\omega      
  \let\s=\sigma \let\t=\tau   
 \let\x=\xi 
\let\D=\Delta   \let\G=\Gamma  \let\L=\Lambda 
\let\O=\Omega      
\begin{document}






\section{Introduction}
The East\footnote{The nickname ``East'' here is only to keep up with
  the tradition. In two dimension ``South-or-West'' would be more appropriate. }
process on $\bbZ^d$ (see \cite{Aldous},\cite{East-Rassegna} and references
therein for $d=1,$ and \cite{CFM2,CFM3,Laure2} for $d\ge 2$), is a keynote example of the class of
\emph{facilitated interacting particle systems} or \emph{kinetically constrained
models} (KCM) which play an important role in several  qualitative
and quantitative approaches to describe the complex behaviour of
glassy dynamics (see e.g.\ \cite{CG} and references therein). It  
is the interacting particle system with state space $\O=\{0,1\}^{\bbZ^d}$
(a continuous time Markov chain on $\{0,1\}^\L$ if restricted to a
finite $\L\subset \bbZ^d$) which
is informally described as follows. Each vertex
$x\in \bbZ^d$, with rate one and independently across $\bbZ^d$, is
resampled from $\{0,1\}$ according to the Bernoulli($p$)-measure,
$p=1-q,$ iff the current state carries at least one vacancy (i.e.\ a
state $``0"$) among the
neighbours of $x$ of the form $y=x-\mathbf e,\, \mathbf e\in \cB, $
where $\cB=(\mathbf e^{(1)},\dots,\mathbf e^{(d)})$ is the canonical basis
of $\bbZ^d$. The product Bernoulli($p$) measure on $\O$ is a
reversible measure for this process and the parameter $q$ is the equilibrium density of
the vacancies, i.e.\ of the facilitating vertices. In the physical
applications $q\simeq e^{-\b},$ where $\b$ is the inverse temperature.  

Thanks to the oriented character of its kinetic constraint (i.e.\ the
requirement that has to be fulfilled in order to permit the update of a vertex), the East
process is one of the few KCM for which a rigorous analysis of the actual
evolution of the process with some arbitrary initial distribution has
been accessible for any value of $q\in (0,1)$
\cite{Blondel,CMST,CFM,CFM2,CFM3,FMRT-cmp,Laure1,Laure2}. In this
paper, building in particular on \cite{CFM2, CFM3}, we make some progress in the analysis of a natural
front evolution problem in $\bbZ^d_+=\{x=(x_1,\dots,x_d)\in \bbZ^d:\
x_i\ge 0\}$ for $q\ll 1$ (i.e.\ low temperature) and $d\ge 2$. We refer the reader
to Section 2 for a precise formulation of the problem and of the main results.

\subsection{Notation}\ 
\begin{enumerate}[$\bullet$]
\item Let $\bbR^d_+=\{x=(x_1,\dots,x_d)\in \bbR^d:\ x_i\ge 0\}$ and for any
$x\in \bbR^d_+$  let $\lfloor
x\rfloor \in \bbZ^d_+$ be such that $\lfloor
x\rfloor_i= \lfloor
x_i\rfloor\ \forall i$. Unit vectors of $\bbR^d_+$ will be written in
bold. Given $x,y\in \bbZ^d_+$ we will write $x\prec
y$ iff $x_i\le y_i\ \forall i,$ $x\prec V, V\subset \bbZ^d_+,$ if
$x\prec y \ \forall y\in V,$ and $\|x-y\|_1:=\sum_i|x_i-y_i|$ for
their $\ell_1$-distance. We shall also write $x=0$ to denote the origin of
$\bbZ^d_+$.  
\item For any $\L \subset
\bbZ_+^d$ we define its \emph{oriented boundary}   $\partial_{\downarrow} \L $ as 
$
\partial_{\downarrow} \L\triangleq \{ x \in \bbZ_+^d \setminus \L\,:\,
x+\mathbf e \in \L \text{ for
  some } \mathbf e\in \cB\}.$ Notice that vertices of $\bbZ^d\setminus
\bbZ^d_+$  are \emph{not} part of the oriented boundary.
\item $\O_\L$ will denote for the product space
$ \{0,1\}^\L$ endowed with the product topology.  If $\L = \bbZ_+^d$ we
simply write $\O$. We will write
$\o_x\in \{0,1\}$ for the state at $x\in \L$ of the configuration
$\o\in \O_\L$ and we will refer to the vertices of $\L$
where $\o\in \O_\L$ is equal to one (zero) as
the \emph{particles} (\emph{vacancies}) of $\o$.  If $V\subset \L$
we will write $\o\restriction_{V}$ for the
restriction of $\o\in \O_\L$ to $V$. In particular we will write
$\o\restriction_V=1$ if $\o(x)=1\ \forall\ x\in V$.  
\item For any $\L\subset \bbZ_+^d$, a configuration
$\s\in \O_{\partial_{\downarrow} \L}$ will be referred to as a \emph{boundary
  condition for $\L$.} If $\s$ contains no particles it will be referred to as
\emph{maximal boundary condition}. Finally, for any given boundary
condition $\s\in \O_{\partial_{\downarrow} \L}$ and $\o\in \O_\L,$ we
will write $\s\cdot\o \in \Omega_{  \partial_{\downarrow} \L\cup \L}$ for the configuration equal
to $\s$ on $\partial_{\downarrow} \L$ and to $\o$ on $\L$.
\item Given $\L\subset \bbZ^d_+$ we will write $\mu_\L$ for the product 
Bernoulli($p$) measure on $\O_\L$ and
$\mu_\L(f),\var_\L(f)$ for the average and variance of $f:\O_\L\mapsto
\bbR$ w.r.t.\ $\mu_\L$. As for $\Omega_{\L}$, if $\L=\bbZ^d_+$ we omit the subscript $\L$
from the notation.
\end{enumerate}

\subsection{The \texorpdfstring{$d$}{d}-dimensional East process}
\label{sec:East process}
 Given $\L \subset \bbZ_+^d,\ \s\in \O_{\partial_{\downarrow} \L}$ and $\o\in \O_\L$, define the
\emph{constraint} $c_x^{\L,\s} (\o)$ at $x\in \L$ as
\[
  c_x^{\L,\s} (\o)=
  \begin{cases}
    1 & \text{if either $x=0$ or $\exists\ \mathbf e\in \cB:
      x-\mathbf e\in \partial_{\downarrow} \L\cup \L \text{ and } (\s\cdot\o)(x-\mathbf e)=0$},\\
    0 & \text{otherwise}.
  \end{cases}
\]
\begin{remark}
Notice that the origin is \emph{unconstrained}.  
\end{remark}

The
infinitesimal generator $\cL^\s _\L$ of the East process in $\L$ with vacancy density parameter $q \in (0,1)$ and boundary
 configuration $\s$ has the form
 \begin{align}
  \label{eq:gen}
  \cL^\s _\L f(\o) &= \sum _{x \in \L} c_x^{\L, \s}(\o)\, \bigl[
  \o_x q+ (1-\o_x)p \bigr] \cdot \bigl[ f(\o^x) -f(\o) \bigr]
  \nonumber\\
  &= \sum _{x \in \L} c_x^{\L, \s}(\o) \bigl[\mu_x (f)- f\bigr]
  (\omega),
\end{align}
where $\o^x$ is the configuration in $\O_\L$ obtained
from $\o$ by flipping its value at $x$. We refer the
reader to \cite{CMRT}.
As the local constraint $c_x^{\L, \s}(\cdot)$ does not depend on
the state of the process \emph{at $x$}, $\mu_\L$ is a reversible measure. Actually, thanks to the orientation
of the constraints a stronger property of local stationarity holds
\cite[Proposition 3.1]{CFM3} together with local exponential ergodicity (see \cite[Theorem 4.1]{CFM3} and \cite[Theorem 2.2]{Laure2}).
When the initial law of the process is
$\nu$ we will write $\bbP^{\L,\s}_\nu(\cdot), \bbE^{\L,\s}_\nu(\cdot)$ for the law and the associated expectation of
the process. When $\nu$ is the Dirac mass at one configuration $\o$ we
will simply write $\bbP^{\L,\s}_\o(\cdot)$ and
$\bbE^{\L,\s}_\o(\cdot)$. The superscript $\L$ will be dropped from the notation if
$\L=\bbZ^d_+$. Similarly for the superscript $\s$ if
$\partial_{\downarrow}\L=\emptyset$. Finally, $\cD_\L^\s(f),\ f:\O_\L\mapsto
\bbR$ denotes the Dirichlet form of the process (i.e.\ the quadratic form of $-\cL_\L^\s$). By construction,
$\cD_\L^\s(f)=\sum_{x\in \L}\mu_\L\big(c_x^{\L,\s}\var_x(f)\big)$.
\begin{remark}
  \label{rem:1}
For $d\ge 2$ and any integer $d'\in [1,d-1]$ the
projection of the East process on $\bbZ_+^d$ onto 
$\bbZ_+^{d'}=\{x\in \bbZ_+^d:
 x_j=0 \ \forall j>d'\}$ coincides with the East process on
 $\bbZ_+^{d'}$. Similarly, for any finite $V\subset \bbZ^d_+$ and any
 box $\L\supset V$ the
 projection of the East process
 on $\bbZ^d_+$ onto $V$ coincides with the same projection of the
 East chain on $\L$.
 \end{remark}
 \subsection{Structure of the paper}
 \begin{itemize}
     \item In Section 2 we formulate the front evolution problem on the positive quadrant of $\bbZ^d$ and state our main result as $q\to 0$ on smallest/largest front velocity in a given direction (cf. Theorem \ref{thm:1}). In turn, Theorem \ref{thm:1} implies the main result on the local equilibrium behind the front (cf. Theorem \ref{thm:2}) together with the mixing time cutoff for the East chain on a box with sides along the coordinate axes (cf. Theorem \ref{thm:3}). 
     \item In Section 3 we develop the two main technical tools needed for the proof of the main results, namely a sharp lower bound on a suitable Dirichlet eigenvalue of the Markov generator (cf. section \ref{sec:RG eigenvalue}) and a bottleneck result (cf. Section \ref{sec:bottleneck}).
     \item Section \ref{sec:proofs} is devoted to the proof of the three main theorems, while Section \ref{sec:Proof of Proposition 3.1} contains the proof of Proposition \ref{prop:1}, the key technical result from Section 3.  
     \item Finally the Appendix contains the proof of a couple lemmas.
 \end{itemize}
\section{The front evolution problem and main result}
Let $\o^*\in \O$ be the configuration identically to one and 
write $\t_x, x\in \bbR^d_+,$ for the hitting time of the set $\{\o:\ \o_{\lfloor
x\rfloor}=0\}$. Sometimes we will refer to $\t_x$ as the \emph{infection time} of $x$. More generally, for any $A\subset \bbZ^d_+$ we will write $\t_A$ for the
hitting time of the set $\{\o:\ \o\restriction_A\neq 1\}.$ Given a unit vector $\mathbf x\in \bbR^d_+,$ it is known 
\cite[Theorem 5.1]{CFM3} that for any $q\in (0,1)$ 
\begin{equation}
  \label{eq:1}
  \bbE_{\o^*}(\t_{n\mathbf x})= \Theta(n), \quad \text{as $n\to +\infty$},
\end{equation}
and that the mixing time of the East chain in
$\{0,\dots,n-1\}^d$ is $\Theta(n)$. It is then natural to define 
\[
\frac{1}{v_{\rm max}(\mathbf x)}=\liminf_{n\to 
\infty}\frac{\bbE_{\o^*}\big(\t_{n\mathbf x}\big)}{n}, \qquad \frac{1}{v_{\rm min}(\mathbf x)}=\limsup_{n\to 
\infty}\frac{\bbE_{\o^*}\big(\t_{n\mathbf x}\big)}{n},
\]
and
denote them  as the maximal and minimal front velocity in the
direction of $x$ respectively. Using \eqref{eq:1} $0<v_{\rm min}(\mathbf x)\le
v_{\rm max}(\mathbf x)<+\infty$ for all $\mathbf{x}$.
\begin{remark}
\label{rem:0}Using the strong Markov property and subadditivity, it is not difficult to see that
$\hat v(\mathbf x)^{-1}:=\lim_{n\to
    \infty}\max_{\o}\bbE_\o(\t_{n\mathbf x})/n$
  exists. Clearly $v_{\rm min}(\mathbf x)\ge \hat v(\mathbf x).$
\end{remark}
In analogy with the classic
\emph{shape theorem} for e.g.\ first passage percolation (see e.g.\ \cite{ADH})
we conjecture that $v_{\rm max}(\mathbf x)=v_{\rm min}(\mathbf
x):=v(\mathbf{x})$
and in that case $v(\mathbf{x})$ represents the
\emph{front velocity in the direction $\mathbf{x}$}. Similarly, for any $t>0$ we could define
the random set (see \cref{fig:0})
\[
  S(t)= \{x\in \bbR^d_+:\ \t_x\le t\},
  \]
and conjecture that there exists a compact subset $\hat S\subset
\bbR^d_+$ such that
\[
\forall \, \e>0\quad   \lim_{t\to \infty} \bbP_{\o^*}\big((1-\epsilon)t\hat S\subseteq
  S(t) \subseteq (1+\epsilon)t\hat S \big)=1.
\]
\begin{figure}
\centering
\begin{tikzpicture}
\node at (7.8,7.8)
    {\includegraphics[scale=0.35]{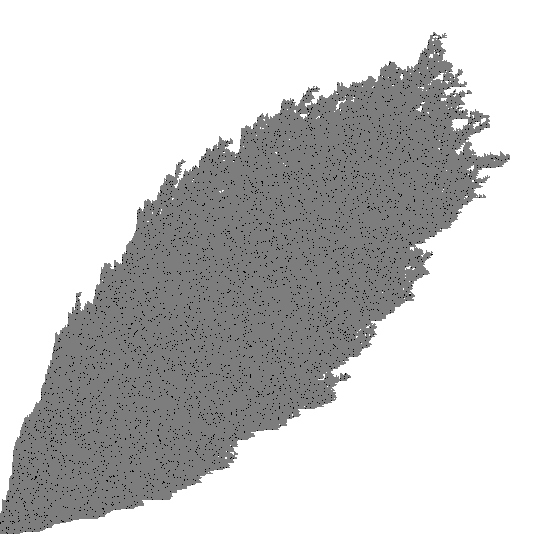}};
    \draw (4.5,4.5) rectangle (11,11);
  \end{tikzpicture}
\caption{A simulation of the random set $S(t)$ for $q=0.04$ suggesting
  the existence of a limit shape. The grey region corresponds to vertices that have been updated at least once before time $t$, while the black dots denote the actual infected sites at time $t$.}
\label{fig:0}
\end{figure}
\begin{remark}
\label{rem:one dim}Using coupling arguments, it has been proved for $d=1$
\cite{Blondel} that $\forall q\in (0,1)$ the position $\xi_t$ of the
rightmost vacancy for
the process started from $\o^*$ obeys a law
of large numbers $\lim_{t\to \infty} \xi_t/t= v \ \text{a.s.}$ and that the law of the East process to the left of $\xi_t$
converges exponentially fast to a limiting law. A precise CLT for
$\xi_t$ was later proved in
\cite{Ganguly-Lubetzky-Martinelli} together with a cutoff result for
the mixing time in a finite interval. In particular, for $d=1$ both conjectures are known to be true. 
For $d\ge 2,$
Remark \ref{rem:1} together with the law of large numbers in $d=1$ imply that
$v_{\rm max}(\mathbf e)=v_{\rm min}(\mathbf e)=v \ \forall \mathbf e \in \cB.$ For all other
directions both conjectures are still widely open. 
\end{remark}
In this paper, for any $d\ge 2$ we
provide a contribution towards the understanding of the front
evolution problem as the vacancies equilibrium density $q\to 0.$ Specifically, our main result concerns the small $q$
behaviour of $v_{\rm max}(\mathbf x),v_{\rm min}(\mathbf x)$ as a function of
$\mathbf{x}\in \bbR^d_+$. We will distinguish between the case
in which the direction $\mathbf{x}$ is fixed independent of $q$ and all its coordinates are positive, and the case in
which $\mathbf{x}=\mathbf{x}(q)$ and $\min_i \mathbf{x}_i\to 0$ 
as $q\to 0$. In the first case we will say that $\mathbf x$ points towards the \emph{bulk of $\bbR^d_+$,} while in the second case $\mathbf x$ points to the \emph{boundary of $\bbR^d_+$}. In the sequel $\theta_q:=|\log_2 q|$ will be the relevant
parameter.
  \begin{maintheorem}
    \label[theorem]{thm:1} Fix $d\ge 2$.  
  \begin{enumerate}[(A)]
  \item Let $\mathbf x\in \bbR^d_+$ be a unit vector with $\min_i\mathbf{x}_i>0$. Then
\[
  \lim_{q\to 0}-\frac{2}{\theta_q^2}\log_2(v_{\rm max}(\mathbf x))=\lim_{q\to 0}-\frac{2}{\theta_q^2}\log_2(v_{\rm min}(\mathbf x))=\frac{1}{d}.
\]
\item Let $0<\b< 1, \k\ge 1 $ and let $\{\mathbf x(q)\}_{q\in (0,1)}$ be a
  family of unit
  vectors in $\bbR^d_+$ such that $\max_{i,j} \mathbf x_i(q)/\mathbf x_j(q)\le
  \k 2^{\b\theta_q}.$  Then
\[
 \limsup_{q\to 0}-\frac{2}{\theta_q^2}\log_2(v_{\rm min}(\mathbf x(q)))<1.
\]
\item Assume $d=2$ and let $\a>0.$ Let $\{\mathbf x(q)\}_{q\in (0,1)}$ be a
  family of unit
  vectors in $\bbR^2_+$ such that $\max_{i,j} \mathbf x_i(q)/\mathbf x_j(q) \ge
  2^{\a \theta_q^2}$. 
 Then
    \begin{align*}
\liminf_{q\to 0}-\frac{2}{\theta_q^2}\log_2(v_{\rm max}(\mathbf x(q))) \ge \frac{(1+4\a) \wedge 2}{2}.
    \end{align*}
Moreover, if $\a>1/4$ then
   \[
  \lim_{q\to 0}-\frac{2}{\theta_q^2}\log_2(v_{\rm max}(\mathbf x(q)))=\lim_{q\to 0}-\frac{2}{\theta_q^2}\log_2(v_{\rm min}(\mathbf x(q)))=1.
\]
  \end{enumerate}
  The same results apply to $\hat v(\mathbf x)$ defined in Remark \ref{rem:0}.
\end{maintheorem}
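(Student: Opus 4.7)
My plan is to reduce all three parts of \cref{thm:1} to asymptotics of the expected infection time $\bbE_{\omega^*}(\tau_{n\mathbf x})$ and to exploit the two technical tools announced in Section~3: a sharp lower bound on a principal Dirichlet eigenvalue of $-\cL^{\sigma}_{\Lambda}$ on suitable ``escape'' regions (i.e.\ \cref{prop:1}) and a bottleneck (Poincar\'e/conductance) estimate for the complementary inequality. Roughly speaking, the Dirichlet eigenvalue gives an \emph{upper} bound on hitting times, hence a \emph{lower} bound on velocity, while the bottleneck result gives a \emph{lower} bound on hitting times, hence an \emph{upper} bound on velocity. In all three parts the matching exponents come from the interplay between these two estimates, with the role of the renormalisation approach of \cite{CFM2} being to pin down the eigenvalue at the same sharp exponential scale as the spectral gap $\gamma_d=2^{-\theta_q^2(1+o(1))/(2d)}$.

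For Part~(A), where the direction $\mathbf x$ is independent of $q$ and all its coordinates are bounded away from zero, the upper bound on $\bbE_{\omega^*}(\tau_{n\mathbf x})$ is obtained by covering a lattice path from the origin to $\lfloor n\mathbf x\rfloor$ with $\Theta(n)$ overlapping translates of a reference box of fixed ``renormalisation side-length'' and applying \cref{prop:1} box by box together with the strong Markov property and \cref{rem:1}. The matching lower bound on $\bbE_{\omega^*}(\tau_{n\mathbf x})$ comes from the bottleneck result applied to a family of $\Omega(n)$ hyperplanes transverse to $\mathbf x$: any trajectory from $\omega^*$ to a configuration with a vacancy at $\lfloor n\mathbf x\rfloor$ must cross each of these cut-sets, and each crossing is rate-limited at the same exponential scale as $\gamma_d$. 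Together they give the common value $1/d$.

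Parts~(B) and~(C) are obtained by refining this scheme in anisotropic geometries. In Part~(B), under the anisotropy bound $\kappa\,2^{\beta\theta_q}$ with $\beta<1$, the construction of Part~(A) survives essentially unchanged: the thinnest transverse side of the tube along $\mathbf x(q)$ remains much larger than the natural renormalisation scale (of order $q^{-o(1)}$), so the $d$-dimensional eigenvalue estimate still applies and yields a velocity strictly larger than $\gamma_1^{1+o(1)}$, which is exactly what the $\limsup<1$ statement encodes. For Part~(C), specific to $d=2$, the tube becomes too thin for two-dimensional renormalisation when $\alpha$ is sufficiently large, and the bottleneck result then forces the process to behave essentially like the one-dimensional East chain. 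Quantitatively, the threshold $\alpha=1/4$ is pinned down by comparing the transverse thickness $\sim n\cdot 2^{-\alpha\theta_q^2}$ with the length scale at which the 2D renormalisation actually improves upon the 1D spectral gap, and the intermediate bound $(1+4\alpha)\wedge 2$ interpolates between the 2D ($\alpha=0$) and 1D ($\alpha\ge 1/4$) regimes. The matching upper bound on velocity for $\alpha>1/4$ is then obtained by an explicit one-dimensional construction inside a width-one strip, yielding speed $\gamma_1^{1+o(1)}$.

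The hard step is \cref{prop:1} itself, namely obtaining the sharp leading-order $q\to 0$ asymptotics of the principal Dirichlet eigenvalue of the constrained generator on the relevant subsets of $\bbZ^d_+$ with maximal boundary condition. The spectral-gap bounds of \cite{CFM2} cannot be invoked directly, since one must restrict to functions vanishing on the boundary event that a distinguished vertex has already been flipped to $0$. Extending the renormalisation cascade of \cite{CFM2} to this setting demands constructing test functions that correctly capture the cost of this boundary, iterating through several natural length scales while tracking how the Dirichlet condition propagates, and, for Part~(C), quantifying precisely how the leading exponent degrades from $1/(2d)$ to $1/2$ as the transverse scale falls below the $d$-dimensional renormalisation threshold; it is this degradation that pins down the critical value $\alpha=1/4$ appearing in the statement.
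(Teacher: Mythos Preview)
Your outline for Parts~(A) and~(B) matches the paper's approach closely: the upper bound on the hitting time is obtained by chaining strong Markov steps along a discretised path from $0$ to $\lfloor n\mathbf x\rfloor$, each step controlled by \cref{lem:1} (which in turn rests on \cref{prop:1}), and the lower bound is a Laplace--transform iteration of the bottleneck estimate (\cref{cor:2}) over $\Theta(n/2^{\theta_q/d})$ successive simplicial regions $H_{y,\ell_q}$. One small correction: \cref{prop:1} is not a statement about the principal Dirichlet eigenvalue; it bounds the quantity $\phi(\beta;d)$, defined purely in terms of \emph{spectral gaps} $\gamma(V)$ of well--chosen subsets $V\subset\Lambda$. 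The passage from $\gamma(V)$ to $\lambda^D(\Lambda)$ is the elementary \cref{claim:0}, so no ``tracking of the Dirichlet condition through the renormalisation'' is needed.

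Part~(C) has a genuine gap. The anisotropy in~(C) is $\max_{i,j}\mathbf x_i/\mathbf x_j\ge 2^{\alpha\theta_q^2}$, which in the language of $(\beta,\kappa)$--outstretched boxes corresponds to $\beta\sim\alpha\theta_q\to\infty$; this regime is entirely outside the scope of \cref{prop:1}, which only distinguishes $\beta<1$ from $\beta\ge 1$ with $\beta$ fixed. In particular, neither the bound $(1+4\alpha)\wedge 2$ nor the threshold $\alpha=1/4$ comes from a degradation of the renormalisation eigenvalue estimate. The paper instead introduces a separate combinatorial construction: for each target $y$ one builds a carved set $U_y$ (a box of side $L\sim 2^{\alpha\theta_q^2(1-\e/2)}$ with a short segment removed from its top edge), defines the random \emph{infection sequence} $\xi^{(0)}=y,\xi^{(1)},\dots$ by iteratively recording the first vertex of $\partial_\downarrow U_{\xi^{(i-1)}}$ to be infected, and classifies the steps as ``good'' (the next point is $h(\xi^{(i)})=\xi^{(i)}-(\lfloor 1/q\rfloor+1)\mathbf e^{(1)}$) or ``bad''. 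One then shows (\cref{lem:2}) that there are at most $y_2$ bad steps and at least $\lfloor y_1 q/2\rfloor$ good ones, bounds the number of infection sequences (\cref{lem:3}), and proves (\cref{lem:4}) that each good step costs time at least $T_\alpha=2^{\frac{\theta_q^2}{4}((1+4\alpha)\wedge 2)(1-o(1))}$. The exponent $(1+4\alpha)\wedge 2$ arises here from a dichotomy inside the proof of \cref{lem:4}: either the infection of $z$ from $h(z)$ stays in the one--dimensional top row, in which case the $d=1$ bottleneck forces time $2^{\theta_q^2/2}$, or it detours through the row below, in which case the $d=2$ bottleneck combined with the distance $n_A\sim L$ to the nearest potentially infected vertex forces time $\gtrsim L\cdot 2^{\theta_q^2/4}\sim 2^{(\alpha+1/4)\theta_q^2}$. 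The crossover $\alpha+1/4=1/2$ is what gives $\alpha=1/4$. Your proposal does not contain this mechanism, and the heuristic ``compare the transverse thickness $n\cdot 2^{-\alpha\theta_q^2}$ with the renormalisation scale'' does not lead to it.
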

\begin{remark}
  Part (C) is presented here only for $d=2$ for simplicity. Remark \ref{rem:1} and
the same proof ideas give similar, although more involved, results also for
$d\ge 3$.   
\end{remark}
By combining (A) above together with Remark \ref{rem:1} we immediately
get
\begin{maincorollary}
Fix $d\ge 2$ and  let $\mathbf x\in \bbR^d_+$ be a unit vector such
that $\min_i \mathbf x_i=0$. Then
\[
   \lim_{q\to 0}-\frac{2}{\theta_q^2}\log_2(v_{\rm max}(\mathbf x))=\lim_{q\to 0}-\frac{2}{\theta_q^2}\log_2(v_{\rm min}(\mathbf x))=\frac{1}{d(\mathbf x)},
 \]
 where $d(\mathbf{x}):=\#\{i\in [d]:\ \mathbf{x}_i >0\}$\footnote{Here
  $[n]:=\{1,2,\dots,n\}$ for any positive integer $n$}.
\end{maincorollary}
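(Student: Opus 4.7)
The plan is to reduce the $d$-dimensional problem along a degenerate direction to the full-dimensional problem in a lower dimension, where Theorem~\ref{thm:1}(A) already applies.

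First I would set $d':=d(\mathbf x)\in\{1,\dots,d-1\}$ and, after relabeling the coordinate axes if necessary, assume that the non-zero entries of $\mathbf x$ are exactly $\mathbf x_1,\dots,\mathbf x_{d'}$. Write $\mathbf x'\in\bbR^{d'}_+$ for the vector of these $d'$ entries. Since only zero entries have been dropped, $\|\mathbf x'\|_2=\|\mathbf x\|_2=1$, so $\mathbf x'$ is a unit vector with $\min_i\mathbf x'_i>0$. Under the natural identification of $\bbZ^{d'}_+$ with the subspace $\{x\in\bbZ^d_+:x_j=0\text{ for all }j>d'\}$, one has $\lfloor n\mathbf x\rfloor=\lfloor n\mathbf x'\rfloor$ for every $n\ge 1$, so the target of the hitting time $\t_{n\mathbf x}$ sits entirely in the lower-dimensional slice.

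Next I would invoke Remark~\ref{rem:1}: the projection of the $d$-dimensional East process on $\bbZ^d_+$, started from $\o^*$, onto $\bbZ^{d'}_+$ coincides in law with the $d'$-dimensional East process on $\bbZ^{d'}_+$ started from its own all-ones configuration. Since the event $\{\o_{\lfloor n\mathbf x\rfloor}\neq 1\}$ involves only the single site $\lfloor n\mathbf x\rfloor\in\bbZ^{d'}_+$, the hitting time $\t_{n\mathbf x}$ has the same distribution under the two processes. Consequently the quantities $v_{\rm max}(\mathbf x)$ and $v_{\rm min}(\mathbf x)$, originally defined through the $d$-dimensional East process, coincide with the corresponding maximal and minimal front velocities of the $d'$-dimensional East process in the direction $\mathbf x'$.

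Finally I would split into two sub-cases. If $d'\ge 2$, then $\mathbf x'$ is a bulk direction of $\bbR^{d'}_+$ and Theorem~\ref{thm:1}(A) applied in dimension $d'$ delivers directly the claimed limit $\tfrac{1}{d'}=\tfrac{1}{d(\mathbf x)}$. If instead $d'=1$, then $\mathbf x'$ is the unique coordinate direction of $\bbZ_+$, Remark~\ref{rem:one dim} yields $v_{\rm max}(\mathbf x')=v_{\rm min}(\mathbf x')=v$, and the known one-dimensional asymptotic $v=\g_1^{1+o(1)}$ (with $\g_1$ the one-dimensional spectral gap, which satisfies $-\tfrac{2}{\theta_q^2}\log_2\g_1\to 1$ as $q\to 0$) closes the argument. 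There is essentially no analytic obstacle here: the heavy lifting lives inside Theorem~\ref{thm:1}(A) and the one-dimensional results recalled in Remark~\ref{rem:one dim}, while Remark~\ref{rem:1} is precisely the bridge that legitimises the dimensional reduction; the extension to $\hat v(\mathbf x)$ is identical, since it too is inherited from the corresponding statement in Theorem~\ref{thm:1}.
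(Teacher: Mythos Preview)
Your proposal is correct and follows exactly the approach the paper intends: the paper merely says the corollary is obtained ``by combining (A) above together with Remark~\ref{rem:1}'', and you have spelled out precisely this reduction, including the necessary case split between $d'\ge 2$ (where Theorem~\ref{thm:1}(A) applies) and $d'=1$ (where the one-dimensional front result and the known asymptotic $v=2^{-\frac{\theta_q^2}{2}(1+o(1))}$ are used, as the paper itself does later in the proof of Theorem~\ref{thm:2}).
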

\begin{remark}
In order to better understand \cref{thm:1}, let us recall a key
feature of the East process on the \emph{full lattice} $\bbZ^d, d\ge 1.$ It is a reversible process
with a positive spectral gap $\g_d$ satisfying (see \cite{Aldous,CMRT} for $d=1$ and \cite{CFM2} for
$d\ge 2$): 
\[
  \lim_{q\to 0} -\frac{2}{\theta_q^2}\log_2(\g_d)=1/d.
\]
Notice that $\g_{d+1}=\g_d^{\,(1+o(1))d/(d+1)}$. Then the three statements of the theorem can be interpreted respectively as follows:
\begin{enumerate}[(A)]
\item if the direction $\mathbf{x}$ points towards the
\emph{bulk} of $\bbR^d_+$, then
$v_{\rm max}(\mathbf x)=v_{\rm min}(\mathbf x)^{1+o(1)}=\g_d ^{1+o(1)}$;
\item if $\mathbf x=\mathbf x(q)$ points to the boundary of $\bbR^d_+$  \emph{slowly enough} as $q\to 0$, then $v_{\rm min}(\mathbf x)$ is much larger than the velocity $
v(\mathbf e), \mathbf e \in \cB,$ in any coordinate direction;
\item for $d=2$ if $\mathbf x=\mathbf x(q)$ points to the boundary of $\bbR^2_+$ \emph{fast
  enough}, then $v_{\rm max}(\mathbf x)$ is much smaller than
the minimal velocity associated to any direction pointing to the bulk of $\bbR^2_+$ and,  if sufficiently fast then $v_{\rm max}(\mathbf x)=
v_{\rm min}(\mathbf x) ^{1+o(1)}=v(\mathbf e) ^{1+o(1)}, \mathbf e\in \cB.$
\end{enumerate}
\end{remark}
\begin{remark}
\cref{thm:1} has been largely motivated by \cite[Theorem 3]{CFM2}. There the authors considered $\L=\{0,\dots,L\}^d, \bbN\ni L\le 2^{\theta_q/d},$ and, using capacity methods combined with a sophisticated combinatorial analysis, analysed the asymptotic behaviour as $q\to 0$ of the mean hitting time $\bbE_{\o^*}(\t_x)$ for two special vertices:
$x_\L=(L,\dots,L)$
and $x'_\L=(L,0,\dots,0).$ One of the main outcomes was that for
$L=2^{\theta_q/d}$ and as $q\to 0$ 
$\bbE_{\o^*}(\t_{x'_\L})=\bbE_{\o^*}(\t_{x_\L})^{d(1+o(1))}$. In other words, for $q$ small enough
and at the length scale $2^{\theta_q/d},$ there is a big time scale
separation between the two mean hitting times. The restriction $L\le 2^{\theta_q/d}$ was dictated by the need of having at equilibrium a constant number of vacancies in the box $\L$ and it was basically unavoidable.  

Extending the analysis of the mean hitting time $\bbE_{\o^*}(\t_x)$ to vertices $x$ of the form $x = n\mathbf x$, where $\mathbf x $ is any direction of $\bbR^d_+$ and $n\in \bbN$ is \emph{arbitrary}, using capacity methods as in \cite{CFM2} seems prohibitive. Therefore, in order to prove \cref{thm:1} we must to appeal to large deviations combined with a fine analysis of certain principal Dirichlet eigenvalues of the process using the renormalization group ideas developed in \cite{CFM2}. The latter technique is illustrated in \cref{sec:RG eigenvalue}.
\end{remark}
The second result analyses the law at time $t\gg 0$ of the East process with initial condition
$\o^*$. It proves that for $q$ small enough the region of $\bbZ^d_+$ where the
East process at time $t$ has relaxed to the reversible measure $\mu$
is extremely elongated in the bulk of $\bbZ^d_+$ (see \cref{fig:0}). 
\begin{maintheorem}
  \label[theorem]{thm:2}Fix $d\ge 2, 0\le \d <1$ and $\e>0$. Let
  \[
    \L(\d,\e,t)=\{x\in \bbZ^d_+:
\min_{i,j}x_i/x_j \ge \d \text{ and } \|x\|_1\le 
2^{-\frac{\theta_q^2}{2d}(1+\e)}\times t\},\quad t>0,
\]
and let
$\nu^{\d,\e}_t$ be
the marginal on $\O_{\L(\d,\e,t)}$ of the law of the East process at time $t$
with initial condition $\o^*$. Then, 
\begin{align}
  \label{eq:2}
\limsup_{\e\to 0}\limsup_{q\to 0}\limsup_{t\to \infty}\|\nu_t^{\d,\e} -
  \mu_{\L(\d,\e,t)}\|_{TV}&=0 \quad \text{if $\d>0,$}\\
\label{eq:2.1}  \liminf_{\e\to 0}\liminf_{q\to 0}\liminf_{t\to \infty}\|\nu_t^{\d,\e} -
  \mu_{\L(\d,\e,t)}\|_{TV}&=1 \quad \text{if $\d=0.$}
  \end{align}
\end{maintheorem}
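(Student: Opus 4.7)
The plan is to treat the upper bound \eqref{eq:2} (bulk case $\d>0$) and the lower bound \eqref{eq:2.1} (boundary case $\d=0$) separately, using rather different ingredients.

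For the upper bound I would reduce to a finite box via \cref{rem:1}. Pick $B=\{0,1,\dots,L\}^d$ with $L:=\lceil t\cdot 2^{-\theta_q^2(1+\e)/(2d)}\rceil$, so that $\L(\d,\e,t)\subseteq B$. By \cref{rem:1}, $\nu_t^{\d,\e}$ coincides with the marginal on $\L(\d,\e,t)$ of the East chain in $B$ started from $\o^*\restriction_B$, and the task reduces to showing that this finite-volume chain is close to $\mu_B$ at time $t$. \cref{thm:1}(A) gives $v_{\rm min}(\mathbf x)^{-1}=2^{\theta_q^2/(2d)(1+o(1))}$ as $q\to 0$ for any bulk direction $\mathbf x$, and the cutoff bound of \cref{thm:3} for boxes with minimal boundary condition yields $\tmix(B)\le L\cdot v_{\rm min}^{-1}(1+o_L(1))$. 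Combined with the choice of $L$ this produces
\[
\frac{t}{\tmix(B)}\ \ge\ 2^{\theta_q^2(\e-o(1))/(2d)},
\]
which diverges as $q\to 0$ for each fixed $\e>0$. The standard geometric decay of TV distance past the mixing time then gives $\|\mathrm{Law}(\o_t^B)-\mu_B\|_{TV}\to 0$ in the iterated order $t\to\infty,\ q\to 0,\ \e\to 0$, and contraction of TV under the projection onto $\L(\d,\e,t)$ closes the argument for \eqref{eq:2}.

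For the lower bound, $\d=0$ guarantees that $\L(0,\e,t)$ contains long segments of the coordinate axes. By \cref{rem:1}, the projection of the East process onto a coordinate axis is itself the one-dimensional East process, and by the one-dimensional LLN recalled in \cref{rem:one dim} the rightmost vacancy $\xi_t^{(1)}$ on the axis satisfies $\xi_t^{(1)}/t\to v_1=2^{-\theta_q^2/2(1+o(1))}$ in probability. Writing $N:=\lfloor t\cdot 2^{-\theta_q^2(1+\e)/(2d)}\rfloor$ and $E_t:=\{\xi_t^{(1)}\le 2v_1 t\}$, one has $\bbP_{\o^*}(E_t)\to 1$ as $t\to\infty$; on $E_t$ the $K:=N-\lfloor 2v_1 t\rfloor$ axis vertices lying in $(2v_1 t, N]$ are all equal to one (since $\xi_t^{(1)}$ is the rightmost vacancy). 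The exponent comparison $(1+\e)/(2d)<1/2$, valid for $d\ge 2$ and $\e$ small, forces $K\sim N\to\infty$ as $t\to\infty$ for each fixed $q,\e$. Letting $A$ be the cylinder event that $\o$ equals one on these $K$ vertices, one obtains
\[
\|\nu_t^{0,\e}-\mu_{\L(0,\e,t)}\|_{TV}\ \ge\ \nu_t^{0,\e}(A)-\mu(A)\ \ge\ \bbP_{\o^*}(E_t)-(1-q)^K,
\]
which tends to $1$ as $t\to\infty$ for each fixed $q>0$, hence also under the triple $\liminf$, establishing \eqref{eq:2.1}.

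The main technical obstacle is the sharpness of the mixing-time estimate used in the upper bound: the naive spectral bound $\tmix(B)\le \gamma_B^{-1}\log|\O_B|$ carries an $L^d$ prefactor and is far too weak in the regime where $L$ grows polynomially in $t$, since it gives $t/\tmix(B)\to 0$ as $t\to\infty$ with $q,\e$ fixed. One therefore genuinely needs the cutoff-type bound from \cref{thm:3}, whose proof is itself one of the technical contributions of the paper. A minor subsidiary point is that the corner box $B$ has empty oriented boundary; should this fall outside the precise framework of \cref{thm:3}, one can instead work with a slightly shifted box and combine \cref{thm:1}(A) with the local exponential ergodicity of \cite[Theorem~4.1]{CFM3} on the region behind the front to reach the same conclusion.
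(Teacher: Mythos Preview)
Your lower bound argument for $\d=0$ is fine and essentially matches the paper's: you use the one-dimensional LLN for $\xi_t$ where the paper uses the bottleneck estimate of Remark~\ref{rem:3}, but the logic is the same---a long stretch of the coordinate axis inside $\L(0,\e,t)$ has not yet been reached by vacancies at time $t$.

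The upper bound argument, however, has a genuine gap. \cref{thm:3} does \emph{not} give $\tmix(B)\le L\,v_{\rm min}^{-1}(1+o(1))$ with $v_{\rm min}$ the bulk velocity of \cref{thm:1}(A); it gives cutoff at $T_L=L/v$ where $v$ is the \emph{one-dimensional} front velocity along the coordinate axes, $v=2^{-\frac{\theta_q^2}{2}(1+o(1))}$. With your choice $L\asymp t\,2^{-\frac{\theta_q^2}{2d}(1+\e)}$ this yields
\[
\tmix(B)\asymp L/v \asymp t\cdot 2^{\frac{\theta_q^2}{2}\big(1-\frac{1+\e}{d}\big)}\gg t
\]
for $d\ge 2$ and small $\e$, so $t/\tmix(B)\to 0$, not $\to\infty$. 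The whole point of the theorem is that the full box $B$ mixes slowly because it contains the coordinate axes, whereas the bulk region $\L(\d,\e,t)$ with $\d>0$ relaxes much faster precisely because it excludes them; projecting down from $B$ throws this distinction away. There is also a circularity issue: the proof of \cref{thm:3} in the paper relies on Lemma~\ref{lemma:LD}, which is established inside the proof of \cref{thm:2}.

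The paper instead works directly on $\L(\d,\e,t)$. It first shows (via \cite[Lemmas~5.3 and~5.5]{CFM3}) that
\[
\|\nu_t^{\d,\e}-\mu_{\L(\d,\e,t)}\|_{TV}\le \e+\sum_{x\in\L(\d,\e,t)}\bbP_{\o^*}(\t_x>t/3),
\]
and then proves a large-deviation estimate (Lemma~\ref{lemma:LD}) of the form $\bbP_{\o^*}(\t_x>t/3)\le e^{-c(q)\log^2 t}$ uniformly over $x\in\L(\d,\e,t)$, using the Dirichlet-eigenvalue bound of Proposition~\ref{prop:1}(i) along a discretised ray from $0$ to $x$. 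Your fallback sentence (``local exponential ergodicity behind the front'') is pointing in this direction, but the substance of the argument---the uniform tail bound on $\t_x$ over bulk directions---is what is actually needed and is missing from your proposal.
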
 
\begin{remark}
A slightly more refined formulation of \cref{thm:2} avoiding
the $\limsup$ on $\e,q$
would have been possible. However, we
opted for the present version for simplicity.      
\end{remark}
Finally we analyse the \emph{mixing time} (see e.g. \cite{Levin2008})
of the East chain on the sequence of boxes $\L_n=\{0,\dots,n\}^d, d\ge 2$. For $q$ small enough and any $n$ large enough we prove total variation cutoff -- i.e. a sharp transition in mixing (see \cite{AldousDiaconis, Diaconis} and references therein) -- around the time 
\begin{equation}
\label{eq:mixing time}
    T_n= n/v,
\end{equation} where $v$ is the front velocity along any coordinate direction $\mathbf e\in \cB$ (see Remark \ref{rem:one dim}).  
More precisely, let 
$d_n(t)=\max_{\o\in \O_{\L_n}}\|\bbP_\o^t(\cdot)-\mu_{\L_n}\|_{TV}$,
where $\bbP_\o^t(\cdot)$ denotes the law at time $t$ of the East process on $\L_n$ with initial condition $\o$.
\begin{maintheorem}
\label[theorem]{thm:3}There exists $q_0\in (0,1)$ such that for any $0<q\le q_0$ 
\begin{align}
\label{eq:cutoff1}    \lim_{\a\to \infty}\liminf_{n\to +\infty}d_n(T_n-\a \sqrt{n})&=1\\
\label{eq:cutoff2}  \limsup_{n\to +\infty}d_n(T_n+n^{2/3} )&=0  
\end{align}
\end{maintheorem}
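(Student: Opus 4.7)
\textbf{Proof plan for Theorem~\ref{thm:3}.}

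\emph{Lower bound \eqref{eq:cutoff1}.} The strategy is to project onto a single coordinate axis. By Remark~\ref{rem:1}, the projection of the East chain on $\L_n$ onto any coordinate axis $\{k\mathbf e^{(i)}:0\le k\le n\}$ coincides with the one-dimensional East chain on $\{0,\dots,n\}$; since total variation distance is non-increasing under projection, $d_n(t)\ge d_n^{(1D)}(t)$, where $d_n^{(1D)}$ denotes the analogous worst-case TV distance in one dimension. The one-dimensional cutoff theorem of \cite{Ganguly-Lubetzky-Martinelli}, valid for $q$ sufficiently small, asserts that $\lim_{\a\to\infty}\liminf_{n}d_n^{(1D)}(T_n-\a\sqrt n)=1$, and \eqref{eq:cutoff1} follows.

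\emph{Upper bound \eqref{eq:cutoff2}.} The guiding picture is that for fixed small $q$ the box $\L_n$ splits naturally into a \emph{bulk region} $B$, where mixing happens well before $T_n$ because bulk sites are reached at the fast velocity $\sim\g_d$ (Theorem~\ref{thm:1}(A)) and relax locally on a timescale that is constant in $n$, and narrow \emph{axial tubes} $A_i=\{x\in\L_n:x_j\le r_n\ \forall j\ne i\}$ with $r_n=\lfloor\log n\rfloor$, whose mixing is dictated by the slow 1D dynamics along the $i$-th axis. Set $t=T_n+n^{2/3}$. On $B=\L_n\setminus\bigcup_i A_i$, an application of Theorem~\ref{thm:2} --- or rather of the arguments in its proof, run uniformly for $q\le q_0$ --- yields that the marginal of the law at time $t$ on $B$ is within $o(1)$ TV distance of $\mu_B$, since $B\subset\L(\d,\e,t)$ for suitable $\d,\e>0$ and $n$ large enough. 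On each axial tube $A_i$, the projection onto the axis is mixed by time $T_n+O(\sqrt n)\ll t$ (1D cutoff), and the transverse $r_n^{d-1}$-dimensional directions, being bulk-like, relax in constant time, so that the marginal on $A_i$ is also $o(1)$-close to $\mu_{A_i}$ at time $t$.

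\emph{Main obstacle.} Pasting the preceding local TV bounds on $B$ and on each $A_i$ into a uniform joint bound $\max_\o\|\bbP_\o^t-\mu_{\L_n}\|_{TV}=o(1)$ is the principal technical difficulty. The natural route is a coalescence coupling via the graphical construction: a chain started from $\o$ and a stationary chain, run with shared Poisson clocks and Bernoulli bits, coalesce at site $x$ once a cascade of vacancies originating at the origin has reached $x$. By Theorem~\ref{thm:1} and its corollary, the slowest coalescence times correspond to the axial corners $n\mathbf e^{(i)}$, and a 1D front CLT (from \cite{Ganguly-Lubetzky-Martinelli}, applied via projection onto each axis) shows that by time $t=T_n+n^{2/3}$ coalescence has occurred at every site of $\L_n$ with probability $1-o(1)$. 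Carrying this out rigorously --- in particular, controlling the fact that initial disagreements in the axial tubes cannot propagate back into the already-mixed bulk through the non-monotone East dynamics --- is the main technical step requiring care.
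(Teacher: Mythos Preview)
Your lower bound argument is correct and identical to the paper's.

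For the upper bound there is a genuine gap in your decomposition. The claim that $B\subset\L(\d,\e,t)$ for some fixed $\d>0$ is false: with $r_n=\lfloor\log n\rfloor$, a vertex such as $x=(n,2r_n)$ in $d=2$ lies in your bulk $B$ (since both coordinates exceed $r_n$) but has $\min_{i,j}x_i/x_j=2\log n/n\to 0$. Neither Theorem~\ref{thm:2} nor Lemma~\ref{lemma:LD} applies to such points, because both require the ratio to be bounded below by a fixed $\d>0$. The region of ``nearly-axial'' vertices lying outside your thin tubes but with one coordinate much smaller than another is precisely where the argument is missing, and it cannot be absorbed into either piece of your decomposition.

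The paper avoids this by a different reduction. First, via an analogue of \cite[Lemmas~5.3,~5.5]{CFM3}, the upper bound is reduced to showing $\max_\o\sum_{x\in\L_n}\bbP_\o(\t_x\ge \hat T_n)\to 0$ with $\hat T_n=T_n+\tfrac12 n^{2/3}$; this is the precise form of the coalescence idea you gesture at, and it removes the need to ``paste'' marginal TV bounds. Second, and this is the geometric idea you are missing, for each $x$ (say with $x_2\le x_1$) one routes through $\hat x=(x_1-x_2-1,0)$ on the axis: the hitting time of $\hat x$ is controlled by the 1D cutoff/CLT of \cite{Ganguly-Lubetzky-Martinelli}, while the remaining step from $\hat x$ to $x$ is exactly in the diagonal direction $(\tfrac{1}{\sqrt2},\tfrac{1}{\sqrt2})$, so Lemma~\ref{lemma:LD} applies with $\d=\tfrac13$. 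Because $v_{\min}(\text{diagonal})\gg v$ for small $q$, the diagonal leg fits inside the available time budget for every $x\in\L_n$, including the nearly-axial ones. Your worry about disagreements ``propagating back'' is unnecessary: the East constraint is oriented, so once the hitting-time reduction is in place no such issue arises.
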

\begin{remark}
Above we didn't try to optimise the cutoff window size. Using \cite[Theorem 2]{Ganguly-Lubetzky-Martinelli} $T_n$ is the mixing time of the standard one dimensional East chain on the interval $\{0,\dots,n\}$. Hence, in a very precise sense, the one dimensional evolution along the coordinate axes dominates the mixing process of the multidimensional East chain in $\L_n$.  
\end{remark}
\cref{thm:3} may look a bit surprising given that we don't know the existence of the front velocity in any direction $\mathbf x$. However, here we exploit the geometry of the boxes $\L_n$ together with the chosen boundary conditions for the East chain (only the origin is unconstrained), and the fact that for small $q$ the front velocity along the coordinate axes is much smaller than the minimal velocity in any other direction pointing towards the bulk of $\L_n$ (cf. part A of \cref{thm:1}). A cutoff result with e.g. a different choice of the geometry of $\L_n$ or of the boundary conditions (e.g. any vertex on the coordinate axes is unconstrained) would require proving at least the existence of the front velocity.

\section{Two key tools}
In this section we describe the two main tools that we use in order to
get upper and lower bounds on $v_{\rm max}(\mathbf x), v_{\rm min}(\mathbf x)$. 
\subsection{Lower bounds on a Dirichlet eigenvalue}
\label{sec:RG eigenvalue}In the sequel we adopt the following convention for the process on
$\L\subset \bbZ^d_+$ with boundary condition $\s$. If either $\s$
is absent because $\partial_{\downarrow}\L=\emptyset$ or
$\s\equiv 1$, then the superscript $\s$ is dropped from the notation.
Given integers
$(L_1,\dots,L_d)$ the set $\L=\prod_{i=1}^d \{0,\dots, L_i\}$ will be called the \emph{box} with
side lengths $(L_1,\dots,L_d).$ We will write $x_\L$ for the vertex $(L_1,\dots,L_d).$ Notice that
$\partial_{\downarrow}\L=\emptyset$. Given a box $\L$ with
side lengths $(L_1,\dots,L_d)$ the set $x + \L$ will be called the \emph{box} with
side lengths $L_1,\dots,L_d$ and origin at $x$. Unless otherwise
specified a box will always have its origin at $x=0$.

Recall now that the origin is always unconstrained. Given a box $\L$ possibly depending on $q,$ it is well
known (see e.g.\ \cite[Section 6]{AB}) that the
hitting time $\t_{x_\L}$ satisfies
\begin{equation}
  \label{eq:4}
  \bbP_\mu(\t_{x_\L}>t)\le e^{-\l^D(\L)t},
\end{equation}
where 
\begin{equation}
  \label{eq:27}
\l^D(\L)=\inf\{\cD_\L(f)/\mu_\L(f^2): \ f:\O_\L\mapsto \bbR,\ f\restriction_{\{\o:\o_{x_\L}=0\}}=0\}
\end{equation}
is the smallest eigenvalue for the Dirichlet problem
\[
  -\cL_\L f = \l f, \quad f\restriction_{\{\o:\ \o_{x_\L}=0\}}=0.
\]
A lower bound on $\l^D(\L)$ is obtained via the
spectral gap $\g(\L)>0$ of the East
chain in $\L$. Using $\var_{\L}(f) \ge q\mu_\L(f^2)$ for all $f$ such that
$f\restriction_{\{\o:\o_{x_\L}=0\}}=0$, we get immediately 
\begin{equation}
  \label{eq:2bis}
\l^D(\L)\ge q\, \g(\L).
\end{equation}
Using Lemma \ref{lem:East}
it follows that $\g(\L)=\g_{d=1}^{(1+o(1))} $
as soon as $\max_i L_i\ge 2^{\theta_q}$ because of the slow relaxation
process mode along the
edges of $\L$ on the coordinate axes. 

If $\max_{i,j}(L_i\vee 1)/(L_j\vee 1) =O(1)$ as $q\to
  0,$ \eqref{eq:2bis} is a very pessimistic bound when $d\ge 2$ because $\l^D(\L)$
should be mostly influenced by the \emph{$d$-dimensional bulk dynamics} rather than
by the \emph{one dimensional dynamics} along the edges of $\L$. In this case 
it is natural to conjecture that, to the leading order as $q\to 0,$
$\l^D(\L)$ is lower bounded by $\g_d$. In order to prove the conjecture the following provides a
better bound than \eqref{eq:2bis}.

For any $V\subset \bbZ^d_+$ let $\g(V)$ be the spectral gap of the East chain in $V$ with boundary conditions identically
        equal to $1$ on $\partial_{\downarrow}V$.  
        \begin{claim}
          \label{claim:0}
          \begin{align}
            \label{eq:00}
  \l^D(\L)&\ge \max\{\l^D(V):\, V\subseteq \L,\,  V\supset \{0,
            x_\L\}\}\nonumber\\
            & \ge q\max\{\g(V):\, V\subseteq \L,\,  V\supset \{0, x_\L\}\}>0.
            \end{align}
\end{claim}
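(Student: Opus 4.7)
\smallskip

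\noindent\textbf{Proof plan for Claim \ref{claim:0}.} The proof is a standard Dirichlet-form monotonicity argument, combined with the bound already used in \eqref{eq:2bis}. I would split it into two independent bounds.

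\emph{Step 1 (first inequality $\lambda^D(\Lambda)\ge \lambda^D(V)$).} Fix $V\subseteq\Lambda$ with $\{0,x_\Lambda\}\subset V$, and let $f:\Omega_\Lambda\to\mathbb R$ be any test function for $\lambda^D(\Lambda)$, i.e.\ $f\restriction_{\{\omega_{x_\Lambda}=0\}}=0$. Using $\mu_\Lambda=\mu_V\otimes\mu_{\Lambda\setminus V}$, decompose $\omega=(\omega_V,\omega_{\Lambda\setminus V})$ and, for each fixed $\omega_{\Lambda\setminus V}$, let $f_{\omega_{\Lambda\setminus V}}:\Omega_V\to\mathbb R$ denote the corresponding section. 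Since $x_\Lambda\in V$, this section still satisfies $f_{\omega_{\Lambda\setminus V}}\restriction_{\{\omega_{x_\Lambda}=0\}}=0$, so it is admissible for $\lambda^D(V)$. The key observation is the pointwise comparison of constraints: for every $x\in V$,
\[
c_x^{V}(\omega_V)\le c_x^{\Lambda}(\omega_V\cdot\omega_{\Lambda\setminus V}),
\]
because the all-one boundary condition on $\partial_\downarrow V$ used in defining $c_x^V$ provides no facilitating vacancy, while any vacancy in $V$ that unblocks $x$ also unblocks it inside $\Lambda$. Dropping all terms of $\mathcal D_\Lambda(f)$ outside $V$ and using this inequality,
\[
\mathcal D_\Lambda(f)\ \ge\ \sum_{x\in V}\mu_\Lambda\bigl(c_x^{V}\,\mathrm{Var}_x(f)\bigr)\ =\ \mu_{\Lambda\setminus V}\!\left(\mathcal D_V(f_{\omega_{\Lambda\setminus V}})\right).
\]
Applying the variational definition of $\lambda^D(V)$ to each section and integrating,
\[
\mathcal D_\Lambda(f)\ \ge\ \lambda^D(V)\,\mu_{\Lambda\setminus V}\!\left(\mu_V(f_{\omega_{\Lambda\setminus V}}^2)\right)\ =\ \lambda^D(V)\,\mu_\Lambda(f^2).
\]
Optimizing over $f$ and over admissible $V$ yields the first bound in \eqref{eq:00}.

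\emph{Step 2 (second inequality $\lambda^D(V)\ge q\,\gamma(V)$).} This is the same trick as in \eqref{eq:2bis}, applied to $V$ instead of $\Lambda$. For any $g:\Omega_V\to\mathbb R$ with $g\restriction_{\{\omega_{x_\Lambda}=0\}}=0$, Cauchy--Schwarz on the indicator of $\{\omega_{x_\Lambda}=1\}$ gives
\[
\mu_V(g)^2\ =\ \mu_V\!\bigl(g\,\mathbf 1_{\omega_{x_\Lambda}=1}\bigr)^2\ \le\ p\,\mu_V(g^2),
\]
hence $\mathrm{Var}_V(g)\ge q\,\mu_V(g^2)$. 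Combined with the Poincaré inequality $\mathcal D_V(g)\ge \gamma(V)\,\mathrm{Var}_V(g)$, this gives $\mathcal D_V(g)/\mu_V(g^2)\ge q\,\gamma(V)$, so $\lambda^D(V)\ge q\,\gamma(V)$.

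\emph{Step 3 (positivity).} For any finite $V\supset\{0,x_\Lambda\}$ with the all-one boundary condition, the East chain on $V$ is irreducible (the origin is unconstrained, which allows propagating vacancies into the bulk of $V$) and reversible on a finite state space, so $\gamma(V)>0$, and since $q\in(0,1)$ the right-hand side of \eqref{eq:00} is strictly positive. The main (mild) subtlety in the argument is the constraint comparison $c_x^V\le c_x^\Lambda$, which relies on the fact that the maximal boundary condition for $V$ is $\sigma\equiv 1$ (no facilitating vacancies from $\partial_\downarrow V$), exactly matching the notational convention adopted at the start of this section; all the rest is routine conditioning.
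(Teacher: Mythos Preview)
Your Steps 1 and 2 are correct and follow exactly the paper's route: constraint monotonicity $c_x^{V}\le c_x^{\Lambda}$ to bound $\mathcal D_\Lambda(f)$ below by the $\mu_{\Lambda\setminus V}$-average of $\mathcal D_V(f_{\omega_{\Lambda\setminus V}})$, then the variational definition of $\lambda^D(V)$ applied sectionwise, and finally the $\mathrm{Var}_V(g)\ge q\,\mu_V(g^2)$ trick for the second inequality.

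Your Step 3, however, overclaims. It is \emph{not} true that the East chain on every $V\supset\{0,x_\Lambda\}$ with all-one boundary is irreducible: take $V=\{0,x_\Lambda\}$ with $\|x_\Lambda\|_1>1$. Then every site $x_\Lambda-\mathbf e$ lies in $\partial_\downarrow V$ with boundary value $1$, so $c_{x_\Lambda}^{V}\equiv 0$, the vertex $x_\Lambda$ is frozen, and $\gamma(V)=0$. The conclusion you want---positivity of the \emph{maximum}---is still correct, but for the simpler reason the paper gives: $V=\Lambda$ is itself admissible in the max, and $\gamma(\Lambda)>0$.
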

    \begin{proof}[Proof of the claim]
Clearly $\max\{\g(V):\, V\subseteq \L,\,  V\supset \{0, x_\L\}\}\ge \g(\L)>0$.
Now fix $\L\supseteq V\ni \{0, x_\L\}$ together with $f$ such that $f\restriction_{\{\o:\o_{x_\L}=0\}}=0,$ and observe that monotonicity in
the constraints implies that
        \[
          \cD_\L(f)\ge \sum_{\o\in \O_{\L\setminus
              V}}\mu_{\L\setminus V}(\o)
            \cD_V(f(\o\,\cdot)).
          \]
Since $V\ni x_\L,$ for any $\o \in \O_{\L\setminus V}$ the function $\O_V\ni \o'\mapsto f(\o\cdot \o') $ vanishes if $\o'_{x_\L}=0$. Therefore, \eqref{eq:27}
  implies that for any $\o\in \O_{\L\setminus V}$
  \[\cD_V(f(\o\,\cdot))\ge \l^D(V)\mu_V(f^2(\o\,\cdot).
  \]
By averaging over $\o$ both sides of the above inequality w.r.t.\ $\mu_{\L\setminus V}(\o)$ we conclude that $\cD_\L(f)\ge \l^D(V)\mu_\L(f^2)$ and the first inequality of
 the claim follows.
 The second inequality follows from the general inequality \eqref{eq:2bis}. 
  \end{proof}
In order to bound from below the r.h.s. of \eqref{eq:00} according to
whether $\max_{i,j}(L_i\vee 1)/(L_j\vee 1) =O(1)$ as $q\to
  0$ or not, it is
convenient to introduce the following geometrical definition.
  \begin{definition}
Fix $d\ge 2, \b \ge 0,$ and $ \kappa\ge 1$. For any given $q\in (0,1)$ let
$S(\b,\kappa;\theta_q)$ be the
collection of $d$-tuple of integers $(L_1,\dots,L_d)$ such that
$\max_{i,j}(L_i\vee 1)/(L_j\vee 1)\le \kappa 2^{\b \theta_q}$.  We say that a box $\L$ with side lengths
$(L_1,\dots,L_d)$ is $(\b,\kappa;\theta_q)$-outstretched if
$(L_1,\dots,L_d)\in S(\b,\kappa;\theta_q)$, i.e. the maximum aspect ratio between its sides does not exceed $\kappa 2^{\b \theta_q}$.
 Notice that $S(\b,\kappa;\theta_q)\subseteq S(\b',\kappa;\theta_q)$ if $\b\le \b'$.
 \end{definition}
 \begin{remark}
Although the class of $(\b,\kappa;\theta_q)$-outstretched boxes contains very regular boxes, e.g. cubes, our focus will be on the most extreme cases where the aspect ratio between the box's sides is close to  $\kappa 2^{\b \theta_q}$.     
 \end{remark}
In the sequel, the parameters $\b,\k$ will always be chosen
independent of $q$. Moreover, whenever the value of $q$ is understood we will simply
write $(\b,\kappa)$-outstretched instead of $(\b,\k;\theta_q)$-outstretched. 
\begin{definition}
\label{def:2}    
Given $\b \ge 0$ we say that $\l>0$ satisfies condition
$\cH(\b)$ and write $\l\sim \cH(\b)$ if for any $\kappa\ge 1, \e>0$ there exists
$q(\b,\kappa,\e)>0$ such that $\forall q\le q(\b,\kappa,\e)$  the
following occurs: $\forall\ (\b,\kappa;\theta_q)$-outstretched box $\L$ 
$\exists\ V\subset \L$ with $V\supset \{0,x_\L\}$
such that $\g(V)\ge  2^{- (1  +\e)\l\frac{\theta_q^2}{2}}.$
We then let $\phi(\b;d)=\min\{\l>0: \l \sim 
\cH(\b)\}
$.
\end{definition}
\begin{remark}
For $d=1$ any box $\L_L=\{0,1,\dots,L\},$ is $(\b,\k)$-outstretched for all
$\b\ge 0,\kappa \ge 1$. Therefore, $\phi(\b;1)=1$ because
$\inf_L\g(\L_L)=2^{-\frac{\theta_q^2}{2}(1+o(1))}$ \cite{CMRT}.   
\end{remark}
Thus, if $\l\sim \cH(\b)$ then Claim \ref{claim:0} implies that for all
$\e>0$ the Dirichlet eigenvalue $\l^D(\L)$ is greater than $2^{- (1
  +\e)\l\frac{\theta_q^2}{2}}$ for \emph{all} $(\b,\kappa;\theta_q)$-outstretched
box $\L$ and for all $q$ small enough depending \emph{only} on $\b,\k,\e$. In
particular,
\begin{equation}
  \label{eq:4bis}
\l^D(\L)\ge 2^{- (1 +\e) \phi(\b;d)\frac{\theta_q^2}{2}}.
\end{equation}
A major problem is then to bound the constant $\phi(\b;d)$ for $d\ge 2$. Lemma \ref{lem:East} implies that $\phi(\b,d)\le
1$. The next result, which in a sense represents the technical core of the paper
and whose proof is
deferred to Section \ref{sec:Proof of Proposition 3.1}, goes beyond
this bound. 
\begin{proposition}\ 
  \label{prop:1} For $d\ge 2$ the coefficient $\phi(\b;d)$ satisfies:
  \begin{align*}
    &(i) &\phi(0;d)
      &=1/d;\\
     &(ii) & \phi(\b;d) &<1 \quad \forall \b\in (0,1);\\
     & (iii) &\phi(\b;d) &=1 \quad  \forall \b\ge 1.
    \end{align*}
In particular, for any $d\ge 2$ and any $(\b,\k)$-outstretched box $\L$
with $\b<1$ the Dirichlet eigenvalue $\l^D(\L)\gg \g_{d=1}$ as $q\to 0$.      
  \end{proposition}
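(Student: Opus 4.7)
The plan is to treat the three items by a combination of the renormalisation group scheme of \cite{CFM2}, which produces lower bounds on $\gamma(V)$ for cleverly chosen subsets $V$, and capacity/bottleneck arguments in the spirit of Section \ref{sec:bottleneck}, which produce matching upper bounds on $\max_V \gamma(V)$ for carefully chosen witness boxes $\Lambda$.

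I start with part (i). When $\beta=0$ the side lengths of $\Lambda$ differ by at most the $q$-independent factor $\kappa$, so $\Lambda$ is essentially cubic. Taking $V=\Lambda$ and invoking the multidimensional renormalisation of \cite{CFM2} yields $\gamma(\Lambda)\ge 2^{-(1+o(1))\theta_q^2/(2d)}$, which gives $\phi(0;d)\le 1/d$. The matching lower bound $\phi(0;d)\ge 1/d$ follows from the capacity lower bound on the mean hitting time of the far corner of a cube of side $\simeq 2^{\theta_q/d}$ proved in \cite{CFM2}: such a bound forces $\gamma(V)\le 2^{-(1-o(1))\theta_q^2/(2d)}$ for every admissible $V$ spanning that cube, via the variational characterisation of $\gamma(V)$ together with a test function peaked near the origin.

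For part (iii), the upper bound $\phi(\beta;d)\le 1$ is free from Lemma \ref{lem:East}. The matching lower bound is established by specialising to the degenerate box $\Lambda=\{0,1,\dots,\lfloor 2^{\beta\theta_q}\rfloor\}\times\{0\}^{d-1}$, for which every admissible $V$ reduces to a one-dimensional subset of this interval containing its two endpoints. Since for $\beta\ge 1$ the interval has length at least $2^{\theta_q}$, the one-dimensional East spectral gap estimate of \cite{CMRT} yields $\gamma(V)\le 2^{-(1-o(1))\theta_q^2/2}$ for every such $V$, hence $\phi(\beta;d)\ge 1$.

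Part (ii) is the technical heart of the proposition and the main obstacle. Given $\beta\in(0,1)$ and a $(\beta,\kappa)$-outstretched box $\Lambda$ whose longest axis is, after relabelling, $\mathbf e^{(1)}$, I would construct $V$ as a concatenation of $N\asymp 2^{\beta\theta_q}$ overlapping sub-boxes aligned along $\mathbf e^{(1)}$, each of aspect ratio $O(1)$ and of short side proportional to $\min_i L_i$. Each block has spectral gap at least $2^{-(1+o(1))\theta_q^2/(2d)}$ by part (i), and the gap of the full chain is estimated through the one-dimensional renormalisation identity of \cite{CFM2}, which interprets the chained system as an effective one-dimensional East process of length $N$ with effective vacancy density equal to the block gap. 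Because $\beta<1$, the effective chain length $N=2^{\beta\theta_q}$ lies strictly below the length at which the one-dimensional gap saturates to $2^{-\theta_q^2/2}$, and a careful tracking of the renormalisation exponent produces $\gamma(V)\ge 2^{-(1+o(1))\lambda(\beta)\theta_q^2/2}$ with $\lambda(\beta)<1$ strictly. The main difficulty is to run this scheme uniformly in $\kappa$ and to handle boxes whose anisotropy is distributed over several coordinates, in which case the single-direction construction must be iterated along each elongated axis while verifying at each stage that the intermediate sub-boxes remain outstretched at a parameter $\beta'<\beta$ for which the previous step applies; controlling the accumulated $o(1)$ corrections and keeping the exponent strictly below $1$ as $\beta\uparrow 1$ is the most delicate step of the entire scheme.
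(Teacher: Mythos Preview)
Your treatment of part (iii) matches the paper's argument and is fine. Your lower bound for part (i) (that $\phi(0;d)\ge 1/d$) is also in the right spirit, though the paper implements it via an explicit bottleneck event $A_*$ from \cite{CFM2} used as a test function rather than via a hitting-time capacity bound.

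The serious gap is in your upper bound for part (i). You write that taking $V=\L$ and invoking \cite{CFM2} gives $\g(\L)\ge 2^{-(1+o(1))\theta_q^2/(2d)}$. This is false in general and is precisely the obstacle the proposition is designed to overcome. For an equilateral box $\L$ with side length $L\ge 2^{\theta_q}$ and boundary condition $\s\equiv 1$ (only the origin unconstrained), the one-dimensional dynamics along the coordinate axes forces $\g(\L)=\g_{d=1}^{1+o(1)}=2^{-(1+o(1))\theta_q^2/2}$, not $2^{-(1+o(1))\theta_q^2/(2d)}$; see the discussion following \eqref{eq:2bis} and Lemma~\ref{lem:East}. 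The entire point of Definition~\ref{def:2} is that one must find a \emph{proper} subset $V\subsetneq\L$ that avoids the slow edge modes. The paper obtains $\phi(0;d)\le 1/d$ by an iterative bootstrap: starting from the trivial $\l=1\sim\cH(0)$, one proves $\l\sim\cH(0)\Rightarrow F(\l)\sim\cH(0)$ for the explicit contraction $F(\l)=\big((2d-1)\l-1\big)/\big(d^2\l-1\big)$ with attractive fixed point $1/d$. Each step of the bootstrap constructs $V$ as the enlargement of a set living on an auxiliary \emph{Knight graph}, which is what allows the effective $d$-dimensional gap to emerge despite the bad axes. None of this machinery appears in your sketch.

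Your outline for part (ii) has the right renormalisation flavour but contains a conceptual slip: the effective vacancy density $q^*$ of the coarse-grained chain is the $\mu$-probability that a mesoscopic block contains a vacancy (so $q^*=1-(1-q)^{|\text{block}|}$), \emph{not} the spectral gap of the block. The block gap enters separately, multiplicatively, when one converts the coarse Poincar\'e inequality back to the original variables via Lemma~\ref{lem:trick}. Moreover, for $d\ge 3$ the paper does not iterate along each elongated axis as you suggest; it reduces to lower dimension by splitting $\L$ into two pieces $\L_1,\L_2$ lying in $(d-1)$- and $2$-dimensional coordinate slices and applying Lemma~\ref{lem:2 blocks}, which gives the clean bound $\phi(\b;d)\le\phi(\b;d-1)\vee\phi(\b;2)$. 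The base case $d=2$ is then handled by a single coarse-graining into one-dimensional vertical strips of height $\ell\asymp 2^{\b\theta_q}$, after which the coarse box is $(0,2)$-outstretched and part (i) applies with parameter $q^*$ satisfying $\theta_{q^*}\approx(1-\b)\theta_q$; tracking the exponents yields the explicit bound $\phi(\b;2)\le\tfrac12(1-\b)^2+2\b-\b^2<1$.
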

A first consequence for the hitting times $\t_x,\ x\in \bbZ^d_+,$ is
provided by the next result. 
\begin{lemma}
  \label{lem:1}
Fix $\e>0, \b\ge 0,\k\ge 1.$ Then there exists $q(\e,\b,\k)$ such that for any $q\le q(\e,\b,\k)$ and any $\L=\L_q$ a
$(\b,\k;\theta_q)$-outstretched box of side lengths $(L_1,\dots,L_d)$ satisfying $2^{\theta_q^{3/2}}/2 \le \min_i
L_i \le 2^{\theta_q^{3/2}}$, the following holds: 
\[
\sup_{x\in \bbZ^d_+} \sup_{\o\in \{\o:\,\o_x=0\}}\bbE_\o(\t_{x+x_\L}) \le 2^{(1+\e)\phi(\b;d)\frac{\theta_q^2}{2}}.
\]
\end{lemma}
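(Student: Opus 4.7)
The plan is to reduce the $\bbZ^d_+$-dynamics to the East chain on the box $\L$ via a coupling argument that exploits the oriented nature of the constraints, and then to apply Proposition \ref{prop:1} together with a standard spectral estimate for hitting times of reversible Markov chains.

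\emph{Step 1 (spectral estimate on $\L$).} Fix $\e'\in(0,\e)$. By Proposition \ref{prop:1} applied to the $(\b,\k;\theta_q)$-outstretched box $\L$, one has $\l^D(\L)\ge 2^{-(1+\e')\phi(\b;d)\theta_q^2/2}$ for every $q$ sufficiently small (depending on $\b,\k,\e'$). Denote by $P^D_t$ the sub-Markov semigroup of the East chain on $\L$ killed upon hitting $\{\o_{x_\L}=0\}$. The variational definition of $\l^D(\L)$ gives $\|P^D_t\|_{L^2(\mu_\L)\to L^2(\mu_\L)}\le e^{-\l^D(\L)t}$. A standard Cauchy--Schwarz argument based on the semigroup identity $P^D_t=P^D_{t/2}\circ P^D_{t/2}$ and on reversibility of the killed semigroup then yields
\[
\bbP^\L_\eta(\t_{x_\L}>t)\le \mu_\L(\eta)^{-1/2}\,e^{-\l^D(\L)t/2}\qquad\forall\,\eta\in\O_\L.
\]
Using $\mu_\L(\eta)\ge q^{|\L|}$ and integrating over $t\ge 0$ gives $\bbE^\L_\eta(\t_{x_\L})\le C(|\L|\,\theta_q+1)/\l^D(\L)$ for a universal constant $C$. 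Since $\L$ is $(\b,\k;\theta_q)$-outstretched with $\min_i L_i\le 2^{\theta_q^{3/2}}$, one has $|\L|\le \k^d 2^{d(\b\theta_q+\theta_q^{3/2})}=2^{o(\theta_q^2)}$, and so for $q$ small enough,
\[
\sup_{\eta\in\O_\L}\bbE^\L_\eta(\t_{x_\L})\le 2^{(1+\e)\phi(\b;d)\theta_q^2/2}.
\]

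\emph{Step 2 (reduction).} For $x=0$ the claim follows immediately from Step 1 combined with Remark \ref{rem:1}, which gives $\bbE_\o(\t_{x_\L})=\bbE^\L_{\o\restriction_\L}(\t_{x_\L})$. For $x\neq 0$ and $\o\in\O$ with $\o_x=0$, we compare the full $\bbZ^d_+$-dynamics with an auxiliary East chain on the translated box $x+\L$, equipped with the maximal (all-$1$) boundary condition on $\partial_\downarrow(x+\L)$, started from $\o\restriction_{x+\L}$. Under this boundary condition the constraint at $x$ is never satisfied (all south-west neighbours of $x$ either lie outside $\bbZ^d_+$ or belong to $\partial_\downarrow(x+\L)$, where they carry the frozen value $1$), so $\o_x$ remains equal to $0$ throughout; after the translation $y\mapsto y-x$, the auxiliary chain is isomorphic to the East chain on $\L$ with the origin frozen at $0$. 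A standard graphical-construction coupling with common Poisson clocks and marks inside $x+\L$, together with the oriented structure of the constraints, shows both that the full dynamics on $\bbZ^d_+$ hits $\{\o_{x+x_\L}=0\}$ no later in stochastic order than the auxiliary chain (additional vacancies outside $x+\L$ in the full process can only further relax constraints inside), and that the auxiliary chain hits $\{\o_{x_\L}=0\}$ no later than the standard East chain on $\L$ with the same initial data (a constant vacancy at the origin provides only extra facilitation). Combining these comparisons with Step 1 yields the desired uniform bound.

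The main technical point is the stochastic domination in Step 2: since the East process is not attractive in the naive componentwise order on configurations, the two couplings above have to be set up carefully using the oriented structure of the constraints, in the spirit of the graphical-coupling arguments developed in the East-model literature (see, e.g., \cite{CFM3,Laure2}).
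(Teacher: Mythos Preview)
Your Step~1 is correct and clean: the $L^2$-spectral estimate $\bbP^\L_\eta(\t_{x_\L}>t)\le\mu_\L(\eta)^{-1/2}e^{-\l^D(\L)t/2}$, combined with $\log_2|\L|=O(\theta_q^{3/2})=o(\theta_q^2)$, does give the right bound for the East chain on $\L$ with unconstrained origin. The case $x=0$ is then immediate via Remark~\ref{rem:1}.

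The gap is in Step~2 for $x\neq 0$, and it is not a matter of ``setting up the coupling carefully.'' Your claim (i)---that the full $\bbZ^d_+$-process hits $\{\o_{x+x_\L}=0\}$ no later than the auxiliary chain---is false already in expectation. Take $d=1$, $x=1$, $\L=\{0,1\}$, so the target is vertex $2$. In the auxiliary chain (all-$1$ boundary at $0$) vertex $1$ is never facilitated, so $\o_1$ is frozen at $0$ and vertex $2$ is permanently facilitated; thus $\t_2^{\mathrm{aux}}$ is exponential with mean $1/q$. In the full process on $\bbZ_+$, vertex $1$ \emph{can} be updated (whenever $\o_0=0$) and will flip to $1$ with positive probability, blocking vertex $2$. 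Since, conditionally on $\{\o_1(s)\}_{s\le t}$, one has $\bbP_\o(\t_2>t)=\exp(-q\int_0^t\id_{\{\o_1(s)=0\}}\,ds)\ge e^{-qt}$ with strict inequality on a set of positive probability, the full process is strictly \emph{slower} in distribution than the auxiliary one. The heuristic ``extra vacancies outside only relax constraints'' fails precisely because a relaxed constraint at $x$ can destroy the anchor vacancy at $x$. Neither \cite{CFM3} nor \cite{Laure2} contains a monotonicity of this type. (Your claim (ii) suffers from the same defect at the level of the pathwise coupling; even if its conclusion were true in expectation, the chain of inequalities collapses once (i) fails.)

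The paper avoids any monotonicity comparison. It proves a separate estimate (Lemma~\ref{lem:election}): using \cite[Corollary~4.2]{CFM3}, starting from any $\o$ with $\o_x=0$ one shows that with high probability some vertex $\xi$ in a box $V_{x,\ell}$ of side $\ell$ around $x$ satisfies $c_\xi(\o(s))=1$ on a time-fraction at least $\ell^{-d}$ of $[0,t]$. One then conditions on the full history on $\partial_\downarrow\L_\xi$, which makes the process inside $\L_\xi$ a \emph{time-inhomogeneous} East chain with deterministic boundary; on each sub-interval where $c_\xi=1$ one applies the Dirichlet-eigenvalue bound (using monotonicity of $\l^D$ in the boundary condition, which \emph{is} valid), and on the others one uses the trivial bound $0$. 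The passage from a fixed initial configuration to $L^2$ costs $\mu_{\L_\xi}(\o)^{-1}\le 2^{\theta_q|\L_\xi|}$, and this is where the size restriction $\min_iL_i\le 2^{\theta_q^{3/2}}$ is actually used. Your Step~1 already contains the right $L^2$ ingredient; what is missing is a correct reduction from $\bbZ^d_+$ to a box, and for that the time-inhomogeneous argument seems unavoidable.
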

\begin{proof}
Fix $x\in \bbZ^d_+, \e>0$ and let $T(\e)= 2^{(1+ \e)\phi(\b;d)
  \frac{\theta_q^2}{2}},\ T^*=2^{2\theta^2_q}.$
Then
\begin{gather}
 \bbE_\o(\t_{x+x_\L})=\int_0^{T(\e)} dt \,\bbP_\o(\t_{x+x_\L}>t) + \int_{T(\e)}^{T^*}dt\,
  \bbP_\o(\t_{x+x_\L}>t) +\int^{+\infty}_{T^*}dt
  \,\bbP_\o(\t_{x+x_\L}>t) \nonumber \\
  \le T(\e) + T^*\bbP_\o(\t_{x+x_\L} >T(\e))
  + \int^{+\infty}_{T^*}dt \,\bbP_\o(\t_{x+x_\L}>t).
  \label{eq:5}
\end{gather}
We will now prove that the supremum over $\o\in \{\o:\,\o_x=0\}$ of the second
and third term in the r.h.s.\ of
\eqref{eq:5} tend to zero as $q\to 0$.
We first need the following general bound whose proof will be provided shortly.
\begin{lemma}
\label{lem:election}
There exist positive constants $c,c'$ independent of $q$ such that the following holds. Fix $\ell\in \bbN$ and for $x\in \bbZ^d_+$ write  $V_{x,\ell}=\{x_1-\ell,\dots,x_1\}\times \dots
\times \{x_d-\ell,\dots,x_d\}\cap\bbZ^d_+$. Then for any box $\L$ with side lengths $(L_1,\dots,L_d)$ and any $t>0$ it holds that
\begin{equation}
\label{eq:24}
\sup_{\o:\ \o_x=0}\bbP_\o(\t_{x+x_\L}>t) \le c't\ell^d e^{-c q \ell}+ 2^{\theta_q(\ell+\max_iL_i)^d -
    t \ell^{-d} \min_{y\in V_{x,\ell}} \l^D(\L_y)},
\end{equation}
where $\L_y= \{y_1,\dots, x_1+L_1\}\times\dots\times
 \{y_d,\dots,x_d+L_d\}.$
\end{lemma}
\begin{remark}
The length scale $\ell$ in the lemma is a free parameter that in the applications we will suitably choose depending on $x,t,\L.$
\end{remark}
Consider now the second term in the r.h.s. of \eqref{eq:5}. In this case we apply Lemma \ref{lem:election} with  $t= T(\e)$ and $\ell=\lfloor \frac 12 \min_i L_i\rfloor$ to bound from above $\bbP_\o(\t_{x+x_\L}>T(\e))$ 
The assumption $\min_i L_i=\Theta\big(2^{\theta_q^{3/2}}\big)$
and the choice of $\ell$ imply that the first term in the r.h.s. of \eqref{eq:24} after multiplication by $T^*$ is $o(1)$ as $q\to 0$. Moreover, the
 fact that $\L$ is $(\b,\k)$-outstretched implies that $\L+y$ is
 $(\b,\k+1)$-outstretched for any $y\in V_{x,\ell}$. In particular, for all $q$ small enough depending only on $\e,\b,\k,$ and for any $y\in V_{x,\ell}$
 \begin{equation}
    \label{eq:8bis}
  \l^D(\L_y)\ge 2^{-(1+\e/2)\phi(\b;d)\frac{\theta_q^2}{2}}.
  \end{equation}
Hence, as
    $q\to 0$
$$
T^*\times(\text{the second term in the r.h.s. of \eqref{eq:24}})\le 2^{2\theta_q^2 + \theta_q 2^{O(\theta_q^{3/2})}} e^{-2^{\e\phi(\b;d)
    \frac{\theta_q^2}{4}}} = o(1).$$ 
We finally consider the third term in the r.h.s.\ of \eqref{eq:5}. In this case, for any $t>T^*$ we apply \eqref{lem:election} with $\ell=\ell_t=t^{1/4d}$. Observe that for some $y\in V_{x,\ell}$ the  box $\L_y$ could be extremely outstretched
in some direction preventing us from using 
Proposition \ref{prop:1}. Hence we are forced to use the spectral gap bound
\eqref{eq:2}
\[
  \min_{y\in V_{x,\ell}} \l^D(\L_y)\ge 2^{-(1+\e)\frac{\theta_q^2}{2}}
  \]
to get that for any $t\ge T^*$ 
  \begin{align*}
\bbP_\o(\t_{x+x_\L}>t) &\le c't\ell^d_te^{-cq t^{1/4d}} +
e^{O(\theta_q)t^{1/4} - t^{3/4}2^{-\frac{\theta_q^2}{2}(1+\e)}}\\
&\le c't^{5/4}e^{-cq t^{1/4d}} + e^{- t^{3/4} 2^{-\frac{\theta_q^2}{2}(1+\e)}/2}.
      \end{align*}
     It now suffices to observe that
      \[
      \int_{T^*}^{+\infty} dt\ \big[c't^{5/4}e^{-cq t^{1/4d}} + e^{- t^{3/4}
        2^{-\frac{\theta_q^2}{2}(1+\e)}/2}\big]=o(1) \quad \text{as $q\to 0$}.
      \]     
\end{proof}
\begin{proof}[Proof of Lemma \ref{lem:election}]
Given $\ell\in \bbN$ and $x\in \bbZ^d_+$ let $\cG(t,\ell), t>0,$ be the event
that there exists $z\in V_{x,\ell}$ such that
\[
  \cT_t(z)=\int_0^t ds\,
  1_{\{c_z(\o(s))=1\}} >t/\ell^d.
  \]
  In other words $z$ is unconstrained for a fraction $\ell^{-d}$ of the time
  $t$. When such a vertex exists we will write $\xi\in V_{x,\ell}$ for the
  smallest  one in the lexicographical order. In
  \cite[Corollary 4.2]{CFM3} it has been proved that
there exist  constants $c,c'>0$ such that
\begin{equation}
  \label{eq:15}
   \sup_{\o\in \{\o:\,\o_x=0\}}\bbP_\o(\cG(t,\ell)^c)\le c' t \ell^d e^{-cq \ell}.
\end{equation}
\begin{remark}
If $t$ is so large that
$V_{x,\ell_t}$ coincides with the box of side lengths $(x_1,\dots,x_d)$,
then the event $\cG(t,\ell_t)^c=\emptyset$ because the origin is always unconstrained.
\end{remark}
Thus, for any $\o$ such that $\o_x=0$, 
\[
\bbP_\o(\t_{x+\L}>t)\le c' t \ell^d e^{-cq \ell} + 
 \bbP_\o(\t_{x+x_\L}>T(\e);\ \cG(x,\ell)).
 \]
Recall that $\L_y= \{y_1,\dots, x_1+L_1\}\times\dots\times
 \{y_d,\dots,x_d+L_d\}$ and 
let  
$\cF_{y,t}$ be the $\s$-algebra generated by the variables
$\{\o_z(s): z\in \partial_{\downarrow}\L_y, s\le t\}$. Notice that $\{c_y(\o(s))\}_{s\le t}$ is measurable w.r.t.\ $\cF_{y,t}$ so that 
\begin{align*}
  \bbP_\o(\t_{x+x_\L}>t;\ \cG(x,\ell))&= \sum_{y\in V_{x,\ell}}\bbP_\o(\t_{x+x_\L}>t;\xi=y)\\
  &=  \bbE_\o
(1_{\{\xi=y\}}\bbP_\o(\t_{x+x_\L}>t \tc
\cF_{y,t})).
\end{align*}
The orientation of the East process implies that, conditionally on $\cF_{y,t},$ the event $\{\t_{x+x_\L}>t\}$ coincides with the same
event for the  \emph{time-inhomogeneous} East chain in $\O_{\L_y}$
with \emph{deterministic, time-dependent}
boundary conditions on $\partial_{\downarrow} \L_y$. We denote the law of the latter chain with initial
state $\o\restriction_{\L_y}$ by $\hat \bbP_{\o}(\cdot)$. Thus,
\begin{align}
  \label{eq:7}
\bbP_\o(\t_{x+x_\L}>t\tc
  \cF_{y,t}) &= \hat \bbP_{\o}(\t_{x+x_\L}>t)\nonumber \\
  &\le \mu(\o\restriction_{\L_y})^{-1}\sum_{\eta\in
  \O_{\L_y}}\mu(\eta)\hat \bbP_{\eta}(\t_{x+x_\L}>t)\nonumber\\
           &\le 2^{\theta_q |\L_y|}\sum_{\eta\in
  \O_{\L_y}}\mu(\eta)\hat \bbP_{\eta}(\t_{x+x_\L}>t).
\end{align}
Let now $t_0\equiv 0<t_1<t_2<\dots< t_n< t_{n+1}\equiv t$ be the times at which the
boundary conditions  on $\partial_{\downarrow} \L_y$ change and let $\s^{(i)}$ denote
the boundary condition during the time interval $(t_{i-1},t_{i})$.  Let
also $\hat \cL^{(i)}$ be the generator of the East chain on $\O_{\L_y}$ with
boundary conditions $\s^{(i)}$ and
let $\cA^{(i)}= 1_{A^c}\hat \cL^{(i)} 1_{A^c}$ be the generator $\hat\cL^{(i)}$ with 
Dirichlet boundary condition on $A=\{\eta\in \O_{\L_y}:\ \eta_{x+x_\L}=0\}$.
Then,
\[
\sum_{\eta\in
  \O_{\L_y}}\mu_{\L_y}(\eta)  \hat \bbP_{\eta}(\t_{x+x_\L}>t)= \langle \mathbf 1,
  e^{t_1\cA^{(1)}}\times e^{(t_2-t_1)\cA^{(2)}}\times\dots\times
  e^{(t_{n+1}-t_n)\cA^{(n+1)}} \mathbf 1\rangle,
\]
where $\mathbf 1(\eta)=1 \ \forall \eta \in \O_{\L_y}$
and $\langle\cdot,\cdot\rangle$ denotes the scalar product in $\ell^2(\O_{\L_y},\mu_{\L_y})$.
Let $\l_i \ge 0$ be the smallest eigenvalue of
$-\cA^{(i)}$. Clearly, 
\begin{equation}
  \label{eq:8}
\langle \mathbf 1,
  e^{t_1\cA^{(1)}}\times e^{(t_2-t_1)\cA^{(2)}}\times\dots\times
  e^{(t_{n+1}-t_n)\cA^{(n+1)}} \mathbf 1\rangle \leq    e^{-\sum_{i=1}^{n+1}(t_i-t_{i-1})\l_i}.
\end{equation}
If during the time interval $(t_i,t_{i+1})$ the constraint $c_y$ at
the vertex $y$ is zero then we simply use $\l_i\ge 0$. If instead
$c_y=1$ we use monotonicity of $\l_i$ in the boundary conditions $\s^{(i)}$ to write $\l_i\ge \l^D(\L_y)$.
Thus, recalling that
$
  \int_0^t ds\,1_{\{c_y=1\}}\ge t\ell^{-d},
$
we get
\begin{align*}
  &\langle \mathbf 1,
  e^{t_1\cA^{(1)}}\times e^{(t_2-t_1)\cA^{(2)}}\times\dots\times
  e^{(t_{n+1}-t_n)\cA^{(n+1)}} \mathbf 1\rangle \nonumber \\
  &\leq e^{-\l^D(\L_y)\, \int_0^t ds\,1_{\{c_y=1\}}}= e^{-t\ell^{-d} \l^D(\L_y)}.
\end{align*} 
In conclusion, 
\[
 \bbP_\o(\t_{x+x_\L}>t;\ \cG(x,\ell))\le 2^{\theta_q|\L_y|-t\ell^{-d} \l^D(\L_y)}\le 
 2^{\theta_q(\ell +\max_i L_i)^d -t\ell^{-d} \l^D(\L_y)}
 \]
and the statement of the lemma follows.
\end{proof}

\subsection{A bottleneck on scale \texorpdfstring{$2^{\frac{\theta_q}{d}}$}{2t/d}}
\label{sec:bottleneck}

\begin{definition}[Legal updates and legal path]
Consider $\L\subseteq \bbZ^d_+$ together with a boundary condition $\s$ for $\L$ if $\L\neq \bbZ^d_+$. Given $\o\in \O_\L$ and $x\in \L$ we say that the update $\o\to \o^x$ is $\s$-\emph{legal} iff $c^{\L,\s}_x(\o)=1$. A sequence $(\o^{(1)},\dots, \o^{(n)})$ of configurations in $\O_\L$ such that $\o^{(i+1)}$ is obtained from $\o^{(i)}$ by
means of a (non-trivial) $\s$-legal update will be
referred to as a $\s$-\emph{legal path} in $\O_\L$ joining $\o^{(1)}$ to
$\o^{(n)}$. When $\L=\bbZ^d_+$ and $\s$ is missing we will simply write legal update and legal path.  
\end{definition}
Before discussing the core of this section, we point out the following \emph{monotonicity property of legal
updates}. Take two sets $\L\subset \L'\subset \bbZ^d_+$ together with
two boundary conditions $\s,\s'$ on $\partial_{\downarrow}\L$ and $\partial_{\downarrow}\L'$
respectively such that $\s_x=0\ \forall \, x\in \partial_{\downarrow}\L\cap \L'$
and $\s_x\le \s'_x\ \forall \, x\in \partial_{\downarrow}\L\cap \partial_{\downarrow}\L'.$  Then any $\s'$-legal
update inside $\L$ is also a $\s$-legal update.
\begin{definition}[Bottleneck]
\label{def:bottleneck}Let $\L_L=\{0,\dots,L\}^d,$ and for $x\in
\bbZ^d_+\setminus \L_L$ let $V_{x,L}=(\L_L+x-x_{\L_L}) \cap \bbZ^d_+$. We say
that $A \subset \Omega_{V_{x,L}}$ is an
$(x,L)$-\emph{bottleneck} if any legal path in $\O$ joining $E_{x,L}\equiv \{\o\in
\O: \ \o\restriction_{V_{x,L}}=1\}$
with $\{\o: \o_x=0\}$ hits $\{\o:
\o\restriction_{V_{x,L}}\in A\}$. 
  \end{definition}
\begin{proposition}\label{prop:bottleneck_general}
In the setting of Definition \ref{def:bottleneck} for any $\e>0$ there exists $q(\e)>0$ such that for
$q\le q(\e)$ the following holds. For
any $L\le 2^{\theta_q/d}$ and $x\in \bbZ^d_+\setminus \L_L$ there exists a $(x,L)$-bottleneck $A$
with $\mu(A)\le
    2^{-(n\theta_q-d\binom{n}{2})(1-\e)}$ where $n:=\lfloor\log_2(L)\rfloor$.
\end{proposition}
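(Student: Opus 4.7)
The plan is to generalise to $d$ dimensions the classical dyadic cascade construction of the bottleneck in the one-dimensional East model, exploiting the orientation of the kinetic constraint and the fact that $x$ sits at the top corner of $V_{x,L}$. I would first reduce to $L = 2^n$: for general $L$, the largest cube of side $2^n$ inside $V_{x,L}$ containing $x$ at its top corner defines a bottleneck for the original problem via the monotonicity property of legal updates recalled at the start of the subsection. The key recursive observation is this: along any legal path from $E_{x,L}$ to $\{\o : \o_x = 0\}$, the East constraint forces a vacancy to appear at some ancestor $x - \mathbf e^{(i)} \in V_{x,L}$ just before $x$ is flipped, and this ancestor is itself the top corner of a $d$-dimensional sub-box in which an analogous hitting problem has to be solved.

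\textbf{Cascade set and bottleneck property.} The bottleneck $A$ is then constructed by $n$ nested dyadic refinements. Recursively on scales $k = n, n-1, \dots, 1$, starting from $V_{x,L}$ at scale $n$: choose a bisection direction $i_k \in [d]$; bisect the current sub-box (whose target top corner is $y_{k+1}$, with $y_{n+1} = x$) along $\mathbf e^{(i_k)}$; pick $y_k$ in the half that does not contain $y_{k+1}$ (the ``near slot'' $B_k(\sigma)$); and pass to scale $k-1$ with the remaining half, still having $y_{k+1}$ as top corner. The geometric series of slot sizes is set up precisely so that the products telescope to $\prod_{k=1}^n |B_k(\sigma)| = 2^{d\binom{n}{2}}$. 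Set
\[
A \df \bigcup_{\sigma \in [d]^n}\ \bigcup_{\substack{y_k \in B_k(\sigma) \\ k \in [n]}}\{\o \in \O_{V_{x,L}}:\ \o_{y_k} = 0 \ \forall\, k \in [n]\}.
\]
To verify that $A$ is an $(x,L)$-bottleneck I would run the standard backwards-in-time argument along an arbitrary legal path: at the first flip of $x$, extract an ancestor vacancy $y_n$; at the first prior flip of $y_n$, extract $y_{n-1}$; iterate $n$ times. The orientation of the constraint plus the monotonicity of legal updates confines the successive facilitating vacancies to $V_{x,L}$, and a dyadic pigeonhole argument assigns them to the slots $B_k(\sigma)$ of a single cascade tuple $\sigma$.

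\textbf{Entropy bound and main obstacle.} A union bound gives
\[
\mu(A) \le \sum_{\sigma \in [d]^n}\ \prod_{k=1}^n |B_k(\sigma)| \cdot q^n = d^n \cdot 2^{d\binom{n}{2}} \cdot q^n.
\]
The hypothesis $L \le 2^{\theta_q/d}$ gives $n \le \theta_q/d$, so the combinatorial factor $d^n = 2^{n\log_2 d}$ is negligible compared with $2^{\e\, n\theta_q}$ for any fixed $\e>0$ provided $q$ is small enough depending only on $\e$ and $d$, yielding the claimed bound $\mu(A) \le 2^{-(n\theta_q - d\binom{n}{2})(1-\e)}$. When $V_{x,L}$ is truncated by $\bbZ^d_+$, the slots $B_k(\sigma)$ only shrink and the bound is reinforced. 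The chief technical obstacle is the backwards cascade: one has to show that the extracted facilitating vacancy at each scale indeed falls in the prescribed dyadic slot and does not leave $V_{x,L}$, despite the fact that a legal path may visit each vertex many times and create/destroy the same vacancy repeatedly. The oriented character of the East constraint, together with the monotonicity of legal updates, is precisely what makes this dyadic pigeonhole go through at every scale.
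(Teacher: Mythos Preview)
The paper does not reprove the bottleneck construction; it reduces to a known result. Its proof consists of two observations: (i) by the monotonicity of legal updates, restricting any legal path in $\Omega$ to the updates that occur inside $V_{x,L}$ produces a $\s_{\rm max}$-legal path in $\Omega_{V_{x,L}}$ (here $\s_{\rm max}$ is the all-\emph{vacancy} boundary condition, not all-ones), and (ii) any such path from the all-one configuration to $\{\omega_x=0\}$ must cross the set $\partial A_*$ of \cite[Section~4]{CFM2}, whose equilibrium probability already satisfies the stated bound. The paper therefore simply invokes the existing $d$-dimensional bottleneck; you are attempting to rebuild it from scratch.

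Your entropy estimate is fine and your cascade is indeed the skeleton of the construction in \cite{CFM2}, but the argument you give for the bottleneck property has a genuine gap. The sequential backwards extraction ``at the first flip of $x$ extract $y_n$; at the first prior flip of $y_n$ extract $y_{n-1}$; iterate'' produces vacancies $y_1,\dots,y_n$ that are present at \emph{distinct} times $t_1<\dots<t_n$, whereas membership in your set $A$ demands $\omega_{y_k}=0$ for all $k$ at a \emph{single} time. At time $t_{n-1}^+$ both $y_{n-1}$ and $y_n$ are vacant, but $y_n$ has not yet been flipped at any time $\le t_{n-2}$; hence for $n\ge 3$ your procedure exhibits no configuration along the path that lies in $A$. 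This is not the ``vacancy stays in the right slot'' issue you flag as the chief obstacle---it is a prior and more basic one, and it is exactly the mathematical content of the result. The correct argument (as carried out in \cite{CFM2}, following the one-dimensional prototype) does not work by backwards extraction at all: one proves, by an induction on scale that bounds the \emph{range} reachable under a fixed simultaneous-vacancy budget, that $\{\omega_x=0\}$ lies outside a set $A_*$ containing the all-one configuration, and then takes the bottleneck to be the boundary $\partial A_*$. Your dyadic counting enters only afterwards, in the estimate of $\mu(\partial A_*)$.
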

\begin{proof}
Fix $\e>0, L\le 2^{\theta_q/d}$ and $x\in \bbZ^d_+\setminus \L_L,$ and w.l.o.g. suppose that $V_{x,L}\subset
\bbZ^d_+$. The case when this assumption fails follows immediately from
the monotonicity property of legal updates described above. Fix a legal path $\G=(\o^{(1)},\dots, \o^{(k)})$ in $\O$ such that
$\o^{(1)}\in E_{x,L}$ and $\o_x^{(k)}=0$. Finally, write $\o^{(j)}_V$ for the restriction to $V_{x,L}$ of $\o^{(j)}$ and let $1\le j_1<j_2<\dots <j_m\le
k$ be those indices 
such that the legal update connecting $\o^{(j_i)}$ to  $\o^{(j_i+1)}$ occurs inside $V_{x,L}$. Let  $\s_{\rm max}$ denotes the \emph{maximal} boundary condition for $V_{x,L}$. Using the monotonicity of legal
updates, the sequence
$\hat \G = (\o^{(j_1)}_V,\dots, \o^{(j_m)}_V) $ is a $\s_{\rm max}$-legal path in $\O_{V_{x,L}}$ connecting the configuration in 
$\O_{V_{x,L}}$ with no vacancies to $\{\o\in \O_{V_{x,L}}:\ \o_x=0\}$.      
The results of \cite[Section 4]{CFM2} imply that $\hat
\G$ must hit a fixed subset $A$ of $\O_{V_{x,L}}$ (called $\partial A_*$ there) whose
equilibrium probability satisfies the required bound.
\end{proof}
\begin{corollary}
  \label{cor:2}
In the same setting 
  \[
    \max_{\o\in E_{x,L}}\bbP_\o(\t_x< t)\le O(t)\times
    2^{-(n\theta_q-d\binom{n}{2})(1-\e)}.
\]
Notice that for $L=2^{\theta_q/d}$ the r.h.s.\ above becomes equal to $O(t)\times   2^{-\frac{\theta_q^2}{2d}(1-\epsilon)}.$ 
  \end{corollary}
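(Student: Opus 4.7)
The plan is to combine the bottleneck property of Proposition~\ref{prop:bottleneck_general} with a reversibility-based flux estimate. Let $B := \{\eta \in \O : \eta\restriction_{V_{x,L}} \in A\}$. By the bottleneck property, any legal trajectory in $\O$ joining $E_{x,L}$ to $\{\eta : \eta_x = 0\}$ must enter $B$; since the East process moves only along legal paths, this gives $\t_x \ge \t_B$ almost surely for any start $\o \in E_{x,L}$, and so
\[
\bbP_\o(\t_x < t) \le \bbP_\o(\t_B < t).
\]
The task reduces to showing $\bbP_\o(\t_B < t) = O(t)\,\mu(A)$ uniformly in $\o \in E_{x,L}$.

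Next, I would bound the hitting probability by the expected number of jumps into $B$:
\[
\bbP_\o(\t_B < t) \le \int_0^t \bbE_\o[r_B(\o(s))]\,ds,
\]
where $r_B(\eta)$ is the instantaneous rate at which the East process jumps from $\eta$ into $B$. Only single-site flips inside $V_{x,L}$ contribute, and dropping the constraint factor $c_y\le 1$ gives $r_B(\eta) \le \bar r_A(\eta\restriction_{V_{x,L}})$, the analogous rate for the unconstrained product single-site flip chain on $V_{x,L}$. Detailed balance of $\mu_{V_{x,L}}$ under the unconstrained chain then yields the flux bound $\bbE_{\mu_{V_{x,L}}}[\bar r_A] \le |V_{x,L}|\,\mu_{V_{x,L}}(A)$, since the equilibrium flux into $A$ equals the flux out and each transition involves one of at most $|V_{x,L}|$ sites.

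To transfer this equilibrium bound to the non-equilibrium start $\o \in E_{x,L}$, I would couple the global East process with the local East chain on $V_{x,L}$ with maximal (all-zero) boundary condition, starting both from $\mathbf{1} = \o\restriction_{V_{x,L}}$. The reversibility of the local chain gives a uniform pointwise bound on its $L^\infty$-density $h_s$ w.r.t.\ $\mu_{V_{x,L}}$: indeed $h_s(\eta) = \bbP_\eta^{\mathrm{loc}}(\o(s) = \mathbf{1})/\mu_{V_{x,L}}(\mathbf{1}) \le 1/\mu_{V_{x,L}}(\mathbf{1})$. The hypothesis $L \le 2^{\theta_q/d}$ forces $|V_{x,L}|\,q \le 1$, so $\mu_{V_{x,L}}(\mathbf{1}) = p^{|V_{x,L}|} \ge e^{-1}$ for $q$ small, giving $\|h_s\|_\infty = O(1)$. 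Combined with the monotonicity of legal updates in the boundary condition (so that the relevant transition rates for the projection of the global process are dominated by those of the local process with maximal boundary), this gives $\bbE_\o[r_B(\o(s))] \le O(\mu(A))$ uniformly in $s$, and integration yields $\bbP_\o(\t_x < t) \le O(t)\,\mu(A)$.

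The main obstacle is the last step: since the bottleneck set $A$ is not a priori monotone in the componentwise partial order on $\O_{V_{x,L}}$, one cannot directly conclude by a monotone-coupling argument that $\{\xi^G \in A\}$ is dominated by $\{\xi^L \in A\}$. The resolution is to bypass monotonicity of $A$ altogether: the equilibrium flux identity requires only reversibility of $\mu_{V_{x,L}}$, while the $L^\infty$-density bound transfers the equilibrium estimate to the $\mathbf{1}$-start precisely because the regime $L \le 2^{\theta_q/d}$ keeps $\mu_{V_{x,L}}(\mathbf{1})$ bounded away from $0$. Plugging in the bound on $\mu(A)$ from Proposition~\ref{prop:bottleneck_general} then yields the claimed inequality.
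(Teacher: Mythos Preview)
Your overall architecture---reduce to hitting the bottleneck $B$, bound the hitting probability by an integrated jump rate, and use that $\mu_{V_{x,L}}(\mathbf 1)^{-1}=O(1)$ in the regime $L\le 2^{\theta_q/d}$---is sound and matches the paper's in spirit. But the step where you transfer the $L^\infty$-density bound from the \emph{local} chain with fixed maximal boundary to the marginal of the \emph{global} process on $V_{x,L}$ is not justified as written. You invoke ``monotonicity of legal updates in the boundary condition'' to make the comparison, yet in the same breath you (correctly) observe that $A$ is not monotone. Monotonicity of transition rates gives a useful comparison only for monotone functionals or under a monotone coupling of the East process, which does not exist in the required sense; so the two ingredients you list do not combine to yield $\bbE_\o[\bar r_A(\o(s)\!\restriction_{V_{x,L}})]\le O(\mu(A))$.

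The paper closes this gap differently and more directly. It first replaces the start $\o\in E_{x,L}$ by $\d_{\o\restriction_{V_{x,L}^c}}\otimes \mu_{V_{x,L}}$ at cost $\mu_{V_{x,L}}(\mathbf 1)^{-1}=O(1)$, and then uses the \emph{local stationarity} of the East process (the orientation of the constraints implies that $\partial_\downarrow V_{x,L}$ evolves autonomously, so conditionally on it the process inside $V_{x,L}$ is a time-inhomogeneous $\mu_{V_{x,L}}$-reversible chain, hence preserves $\mu_{V_{x,L}}$). This makes the equilibrium flux bound apply verbatim at every time $s$, with no coupling or monotonicity needed. Your argument can be repaired along the same lines: condition on the autonomous evolution of $\partial_\downarrow V_{x,L}$; the conditional process on $V_{x,L}$ started from $\mathbf 1$ is then a composition of $\mu_{V_{x,L}}$-reversible kernels, and the telescoping identity $\mu(\eta_{i-1})P_i(\eta_{i-1},\eta_i)=\mu(\eta_i)P_i(\eta_i,\eta_{i-1})$ gives the pointwise density bound $\le \mu_{V_{x,L}}(\mathbf 1)^{-1}$ directly for this time-inhomogeneous chain---without ever comparing to the fixed maximal-boundary chain.
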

  \begin{proof}
 We only give a quick sketch because the proof of similar statements
 has already appeared elsewhere (see e.g.\ \cite{CFM}). Fix $L\le 2^{\theta_q/d}$ and $x\in
 \bbZ^d_+\setminus \L_L.$ Using Proposition \ref{prop:bottleneck_general} there
 exists $A\subset \O_{V_{x,L}}$ such that
 \[
  \max_{\o\in E_{x,L}}\bbP_\o(\t_x< t)\le   \max_{\o\in
    E_{x,L}}\bbP_\o(\t_{A}\le t).
\]
 For a given $\o\in \O_{V_{x,L}^c}$ write
$\d_\o\otimes \mu_{V_{x,L}}$ for the product measure on $\O$ whose marginals
on $\O_{V_{x,L}^c}\otimes \O_{V_{x,L}}$ are the Dirac mass at $\o$ and $\mu_{V_{x,L}}$
respectively. Using $L\le 2^{\theta_q/d}$ we get that
$\mu_{V_{x,L}}(\o\restriction_{V_{x,L}}=1)^{-1}=O(1)$ as $q\to 0$. Hence,
\begin{align*}
   \max_{\o\in E_{x,L}}\bbP_\o(\t_A\le t)& \le O(1)\times
   \max_{\o\in \O_{V^c_{x,L}}} \bbP_{\d_\o\otimes
                                           \mu_{V_{x,L}}}(\t_A\le t)\\
 &\le O(t\, L^d) \max_{\o\in \O_{V_{x,L}^c}} \sup_{s\le t}\bbP_{\d_\o\otimes \mu_{V_{x,L}}}(\o(s)\restriction_{V_{x,L}}\in  A ).
\end{align*}
It is easy to check (see \cite[Section 3]{CFM3}) that $\mu_{V_{x,L}}$ is stationary for the marginal on $\O_{V_{x,L}}$
of the East
process with initial distribution $\d_\o\otimes \mu_{V_{x,L}}$. Hence,
the r.h.s.\ above is equal to $O(tL^d)\mu(A)\le
O(t)2^{-(n\theta_q-d\binom{n}{2})(1-2\e)}$ for $q$ small enough
depending on $\e$.
\end{proof} 
\section{Proof of Theorems \ref{thm:1}, \ref{thm:2}, and \ref{thm:3}}
\label{sec:proofs}
\subsection{Proof of Theorem \ref{thm:1}: (A)} 
\label{sec: proof thm1}In the sequel $\mathbf x\in \bbR^d_+$
will denote a unit vector independent of $q$ with
$\min_i \mathbf x_i>0.$
\subsubsection{Lower bound on \texorpdfstring{$v_{\rm min}(\mathbf x)$}{vmin(x)}.}
\label{sec:lower bound vmin}Let $\ell=\lfloor
2^{\theta_q^{3/2}}\rfloor$ and let $x^{(n)}=\lfloor n\ell \mathbf x\rfloor
, \ n\in \bbN$. We begin by proving that
\begin{equation}
  \label{eq:9}
      \limsup_{n\to \infty}\frac{\bbE_{\o^*}(\t_{x^{(n)}})}{n}\le
2^{\frac{\theta_q^2}{2d} (1+o(1))}\quad \text{as $q\to 0$.}
\end{equation}
Clearly
\[
  \t_{x^{(n+1)}}\le \inf\{s\ge \t_{x^{(n)}}:\ \o_{x^{(n+1)}}(s)=0\},
  \]
 so that, using the strong Markov property, 
  \begin{align*}
    \bbE_{\o^*}(\t_{x^{(n+1)}})\le \bbE_{\o^*}(\t_{x^{(n)}}) +
    \max_{\o\in \{\o:\, \o_{x^{(n)}}=0\}}\bbE_\o(\t_{x^{(n+1)}}).
  \end{align*}
Let $L_i= x_i^{(n+1)}-(x_i^{(n)}+1), i\in [d].$ Clearly the
box with sides length $(L_1,\dots,L_d)$ is $(0,\k)$-outstretched with
$\k=\max_{i,j}\mathbf x_i/\mathbf x_j +1$ and Lemma \ref{lem:1} implies that, uniformly in $n$, for
any $\e>0$ 
\begin{equation}
  \label{eq:9bis}
\max_{\o\in \{\o:\, \o_{x^{(n)}}=0\}}\bbE_\o(\t_{x^{(n+1)}})\le
2^{\frac{\theta_q^2}{2d}(1+\e)},
\end{equation}
for any $q$ sufficiently small depending on $\e$.
Equation \eqref{eq:9} now follows immediately. 

In order to complete the proof of (A) we write
\begin{equation*}
      \bbE_{\o^*}(\t_{n \mathbf x})\le     \bbE_{\o^*}(\t_{x^{(\lfloor
          n/\ell\rfloor)}}) + \max_{\o\in \{\o:\, \o_{x^{(\lfloor
            n/\ell\rfloor)}}=0\}}\bbE_\o(\t_{n \mathbf x}).
\end{equation*}
By using the arguments entering into the proof of Lemma \ref{lem:1}
it is easy to see that $\sup_n \max_{\o\in \{\o:\, \o_{x^{(\lfloor
      n/\ell\rfloor)}}=0\}}\bbE_\o(\t_{n \mathbf x})<+\infty$. Therefore
\begin{equation*}
  \limsup_{n\to \infty}\frac{\bbE_{\o^*}(\t_{n \mathbf x})}{n}\le
  \ell^{-1}2^{\frac{\theta_q^2}{2d}(1+o(1))}= 2^{\frac{\theta_q^2}{2d}(1+o(1))},
\end{equation*}
 because of the choice of $\ell$. In conclusion we have proved that
 $v_{\rm min}(\mathbf x) \ge 2^{-\frac{\theta_q^2}{2d}(1+o(1))}$ as $q\to 0$. \qed

 \subsubsection{Upper bound on \texorpdfstring{$v_{\rm max}(\mathbf x)$}{vmax(x)}.}
For any
   $y\in \bbZ^d_+$ and $n\le \|y\|_1$ let $H_{y,n}=\{z:\ z\prec y,\,
   \|y-z\|_1\le n\}.$
Fix now $y\in \bbZ^d_+$
 with $\|y\|_1\ge \ell_q=\lfloor 2^{\theta_q/d}\rfloor$ and observe that 
if the starting configuration of
the East process on $\bbZ^d_+$ is $\o^*,$ then $\tau_{\partial_{\downarrow} H_{y,\ell_q}}<\tau_y$ a.s.
Hence, for all $\l>0$ the strong Markov property gives
\begin{align}
  \label{eq:10}
  \bbE_{\o^*}(e^{-\l \t_y})&=\bbE_{\o^*}\big(e^{-\l \tau_{\partial_{\downarrow} H_{y,\ell_q}}}\bbE_{\o_{\tau_{\partial_{\downarrow}H_{y,\ell_q}}}}(e^{-\l\t_y})\big)
  \nonumber \\
                        &\le  W(\l)\sum_{z\in \partial_{\downarrow} H_{y,\ell_q}}\bbE_{\o^*}(e^{-\l \t_z}),
  \end{align}
  where    $W(\l):=\sup_{z:\, \|z\|\ge \ell}\max_{\o\in \{\o:\,
    \o\restriction_{H_{z,\ell_q}}=1\}}\bbE_\o(e^{-\l\t_z})$.
Using $|\partial_{\downarrow} W_{y,\ell_q}|\le O(\ell^{d-1})$ we can iterate \eqref{eq:10}
to get  that
  \[
    \bbE_{\o^*}(e^{-\l \t_y})\le 
    \Big(O(\ell^{d-1})W(\l)\Big)^{\lfloor \|y\|_1/\ell\rfloor}.
    \]
    \begin{claim}
 For any $\e>0$ sufficiently small let
 $T(\e)=2^{\frac{\theta_q^2}{2d}(1-\e)}$ and choose $\l=\l(\e,q)=
 \e\theta_q^2 T(\e)^{-1}$. Then $W(\l(\e,q))\le e^{-\O(\e \theta_q^2 )}$ as $q\to 0$. 
    \end{claim}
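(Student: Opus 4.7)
The plan is to combine the Laplace-transform identity
\[
  \bbE_\o(e^{-\l\t_z}) \;=\; \l\int_0^\infty e^{-\l t}\,\bbP_\o(\t_z\le t)\,dt
\]
with the bottleneck bound of \cref{cor:2}, splitting the integral at the scale $T(\e)$. On the tail $t>T(\e)$ we use $\bbP_\o(\t_z\le t)\le 1$, and the exponential weight produces at most $e^{-\l T(\e)}=e^{-\e\theta_q^2}$ by the very choice of $\l$. It then remains to show that the contribution of $[0,T(\e)]$ is of the same order.

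For this I need a bottleneck estimate that is uniform in $z$ with $\|z\|_1\ge\ell_q$ and in every $\o$ with $\o\restriction_{H_{z,\ell_q}}=1$. The delicate point is that \cref{cor:2} is phrased for cubes $V_{x,L}$ rather than for the oriented staircase $H_{z,\ell_q}=\{w\prec z:\|z-w\|_1\le\ell_q\}$. The reconciliation is purely geometric: $V_{z,L'}\subset H_{z,\ell_q}$ whenever $L'\le\ell_q/d$, because $w\in V_{z,L'}$ forces $\|z-w\|_1\le dL'$. Taking $L'=\lfloor\ell_q/d\rfloor-1$ therefore strengthens the hypothesis $\o\restriction_{H_{z,\ell_q}}=1$ to $\o\in E_{z,L'}$, and at the same time $\|z\|_1\ge\ell_q$ yields $\max_i z_i\ge\ell_q/d>L'$, so that $z\notin\L_{L'}$ and \cref{cor:2} applies. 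Using it with bottleneck tolerance $\e/2$ gives
\[
  \bbP_\o(\t_z\le t)\;\le\; O(t)\cdot 2^{-\theta_q^2(1-\e/2)/(2d)},
\]
the extra $\log_2 d$ inside $n=\lfloor\log_2 L'\rfloor$ contributing only $O(\theta_q)$ corrections to the exponent $n\theta_q-d\binom{n}{2}$, which are absorbed in the $(1-\e/2)$ factor for $q$ small.

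Evaluating at $t=T(\e)=2^{\theta_q^2(1-\e)/(2d)}$ the right hand side becomes $O(1)\cdot 2^{-\theta_q^2\e/(4d)}$, and for general $t\le T(\e)$ it is at most the same quantity times $t/T(\e)$, so the corresponding integral contributes at most $O(2^{-\theta_q^2\e/(4d)})$ after using $\int_0^\infty\l e^{-\l t}\,dt=1$. Combined with the tail term $e^{-\e\theta_q^2}$ this yields $\bbE_\o(e^{-\l\t_z})\le e^{-\Omega(\e\theta_q^2)}$ uniformly in $z$ and in $\o$, which is precisely the claimed bound on $W(\l(\e,q))$. The main obstacle is the shape mismatch between the oriented staircases $H_{z,\ell_q}$ and the cubes $V_{x,L}$ appearing in \cref{prop:bottleneck_general}; once this is resolved by inscribing $V_{z,L'}$ inside $H_{z,\ell_q}$, the remaining work is a calibration of the two exponential scales, made possible by the defining identity $\l T(\e)=\e\theta_q^2$.
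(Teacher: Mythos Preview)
Your proof is correct and follows essentially the same strategy as the paper: split at the scale $T(\e)$, apply the bottleneck bound of \cref{cor:2} below $T(\e)$, and use the trivial bound above. The paper uses the slightly more direct decomposition $\bbE_\o(e^{-\l\t_z})\le e^{-\l T(\e)}+\bbP_\o(\t_z\le T(\e))$ in place of your Laplace-transform integral, and it glosses over the $H_{z,\ell_q}$ versus $V_{z,L}$ shape mismatch that you carefully resolve by inscribing a cube of side $\lfloor\ell_q/d\rfloor-1$.
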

    \begin{proof}[Proof of the claim]
 Using Corollary \ref{cor:2}, for any $z$ with $\|z\|_1\ge \ell_q$ and
 any $q$ small enough depending on $\e$, we get
 \begin{align*}
\max_{\o\in \{\o:\,
       \o\restriction_{H_{z,\ell_q}}=1\}}\bbE_\o(e^{-\l\t_z}) 
     &\le e^{-\l T(\e)} + \max_{\o\in \{\o:\,
       \o\restriction_{H_{z,\ell_q}}=1\}}\bbP_\o(\t_z\le T(\e)) \\
   &\le e^{-\e \theta_q^2} + O(T(\e))2^{-\frac{\theta_q^2}{2d}(1-\e/2)}= 
  e^{-\O(\e \theta_q^2 )}.
 \end{align*}
\end{proof}
Using $e^{-\l \bbE_{\o^*}(\t_y) }\le \bbE_{\o^*}(e^{-\l \t_y})$ and
choosing $\l$ as in the claim, we finally obtain 
\begin{align}
  \label{eq:10bis}
   \bbE_{\o^*}(\t_y) \ge \O\big(2^{\frac{\theta_q^2}{2d}(1-\e)}\big) \lfloor 2^{-\theta_q/d}\|y\|_1\rfloor.
 \end{align}
In particular, \eqref{eq:10bis} implies that $v_{\rm max}(\mathbf x)\le
 2^{-\frac{\theta_q^2}{2d}(1-o(1))}$ as $q\to 0.$ 
 \qed
 \begin{remark}
   \label{rem:3}
   Exactly the same proof applies to get the following result. For any
   $\e >0$ there exists $q(\e)>0$ and $c(\e)>0$ such that the
   following holds for $q\le q(\e)$. For any
   $y\in \bbZ^d_+$ and $n\le \|y\|_1$ 
   \[
     \max_{\o:\,  \o\restriction_{H(y,n)}=1}\bbP_{\o}(\t_y\le n
     T(\e))\le e^{- c\e \theta_q^2\lfloor
       n2^{-\frac{\theta_q}{d}}\rfloor}.
   \] 
 \end{remark}
 \subsection{Proof of Theorem \ref{thm:1}: (B)}
The proof is identical to that of Section \ref{sec: proof thm1}
with the following modification. The box $\L$ with side lengths $L_i=
x_i^{(n+1)}-(x_i^{(n)}+1), i\in [d],$ is now $(\b,\k+1)$-outstretched because of the assumption
on the direction $x=x(q)$. Using again Lemma \ref{lem:1}
we get the analogue of\ \eqref{eq:9bis}:
\begin{equation}
  \label{eq:16}
\max_{\o\in \{\o:\, \o_{x^{(n)}}=0\}}\bbE_\o(\t_{x^{(n+1)}})\le
2^{\phi(\b;d)\frac{\theta_q^2}{2}(1+\e)}.
\end{equation}
The rest of the argument remains unchanged and the conclusion is that
\[
  \limsup_{n\to \infty}\frac{\bbE_{\o^*}(\t_{nx})}{n}\le
  \ell^{-1}2^{\phi(\b;d)\frac{\theta_q^2}{2}(1+\e)},
\]
i.e. 
\[
 \limsup_{q\to 0}-\frac{1}{\theta_q^2}\log_2(v_{\rm
   min}(x))\le\frac{\phi(\b;d)}{2}<\frac 12
\]
because $\phi(\b;d)<1$ if $\b\in [0,1)$.\qed
\subsection{Proof of Theorem \ref{thm:1}: (C)}
Fix a $q$-dependent unit vector $\mathbf x\in \bbR^2_+$ such that $0<\mathbf x_2\le
\mathbf x_12^{-\theta_q^2\a}$ with $\a>0$. In order to track how a vacancy can
propagate from the origin
to the vertex $\lfloor n \mathbf x\rfloor\in \bbZ^2_+$ we introduce the following construction.

Let $0<\e\ll 1$ and let $L=L(\e,\a,q)=\lfloor
2^{\theta_q^2\a(1-\e/2)}\rfloor$. W.l.o.g. we assume that $q$ is so
small that $L\gg 2^{\theta_q}$.
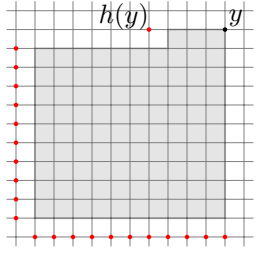
\begin{figure}[H]
    \centering
    \begin{tikzpicture}[scale=0.5]
        \draw[step=0.5cm,gray,very thin] (-0.25,-0.25) grid (6.25,6.25);
        \draw[thick,fill=gray, opacity=0.2] (5.5,5.5) -- ++(-1.5,0) -- ++(0,-0.5) -- ++(-3.5,0)
        -- ++(0,-4.5) -- ++(5,0) -- ++(0,5);
        \draw[thick, cross] (3.5,5.5) node[anchor=south east] {$h(y)$};
        \draw[red,fill=red] (3.5,5.5) circle (.3ex);
        \foreach \x in {0,...,9}{
             \draw[red,fill=red] (0.5+\x*0.5,0) circle (.3ex);
             \draw[red,fill=red] (0,0.5+\x*0.5) circle (.3ex);
        }
        \draw[red,fill=red] (5.5, 0) circle (.3ex);
        \draw[thick, cross] (5.6,5.6) node[anchor=south west] {$y$};
        \draw[black,fill=black] (5.5,5.5) circle (.3ex);
    \end{tikzpicture}
    \caption{Example for a set $U_y$ (the gray region).
    The red vertices denote $\partial_{\downarrow} U_y$.}
    \label{fig:Ux} 
\end{figure}
\begin{definition}\label{def:Ux}
    For $y=(y_1,y_2)\in \Z_+^2$ such that $1\le y_2\le
    2^{-\theta_q^2\a}\,y_1$ let $B_{y,L}\subset \bbZ^2$ be the box
    of side lengths $(L,L)$ and upper-right corner at $y$ and let (see
    Figure \ref{fig:Ux}) 
    \begin{align*}
        U_y = \Big(B_{y,L}\setminus \cup_{i=\lfloor
      1/q\rfloor+1}^{L}\{y-i\mathbf e^{(1)}\} \Big)\cap \Z_+^2.
    \end{align*}
Let also $h(y):=y-(\lfloor 1/q\rfloor +1)\mathbf e^{(1)} $ and
    note that $h(y)\in \partial_{\downarrow} U_y$. 
\end{definition}
If the starting configuration of
the East process on $\bbZ^2_+$ is $\o^*,$ then $\t_{\partial_{\downarrow}U_y}<\t_{U_y}<\t_y$. This
observation justifies the following definition. In the sequel
$\{\o_t\}_{t\ge 0}$ denotes the East process in $\bbZ^2_+$ with
  $\o_0=\o^*$.
\begin{definition}[Infection sequence for $y$]
\label{def:1}Let $\xi^{(0)}=y$ and define
recursively $\xi^{(i)}$ as the unique vertex $z\in
 \partial_{\downarrow} U_{\xi^{(i-1)}}$ such that $\o_{\tau_{\partial_{\downarrow} U_{\xi^{(i-1)}}}}(z)=0$. We also let $\nu:= \inf\{i\in \N\colon 0\in U_{\xi^{(i)}}\}$
and call the random sequence $\xi(y)=\{\xi^{(i)}\}_{i\in [\nu]}$ the \emph{infection sequence 
for $y$}.
The collection of all possible infection sequences is denoted by
$\mathcal{S}(y)$. Given $\mathbf v =\{v^{(i)}\}_{i} \in \mathcal{S}(y)$ we say
that $v^{(i)}$ is \emph{good} if $v^{(i+1)}=h(v^{(i)})$ and \emph{bad}
otherwise.
\end{definition}
\begin{remark}
  \label{rem:2}
  By construction any possible infection sequence $\mathbf v$ is such that
$\|v^{(i)}-v^{(i+1)}\|_1\ge \lfloor 1/q\rfloor$.  
\end{remark}
\begin{lemma}\label{lem:2}
For any $q$ small enough, any infection
sequence in $\mathcal{S}(y)$ contains at most $y_2$ bad points and at
least $\lfloor y_1 \frac q2 \rfloor $ good
points.
\end{lemma}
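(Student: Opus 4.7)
The plan is to read off the two bounds from an explicit description of $\partial_{\downarrow} U_z$ and a ledger of how much each type of transition shifts the two coordinates. First I would verify that, for any $z=(z_1,z_2)\in \bbZ^2_+$ with $1\le z_2\le 2^{-\theta_q^2\a}z_1$, the downward boundary $\partial_{\downarrow}U_z$ decomposes as the disjoint union of the singleton $\{h(z)\}$, of the left column $\{(z_1-L-1,j):\ j\in[\max(0,z_2-L),z_2-1]\}$, and (when $z_2\ge L+1$) of the bottom row $\{(j,z_2-L-1):\ j\in[z_1-L,z_1]\}$. This is a direct case check distinguishing whether a vertex outside $U_z$ is removed from the top row of $B_{z,L}$ or sits just below/just to the left of $B_{z,L}$. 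The punchline is that $h(z)$ is the \emph{only} element of $\partial_{\downarrow}U_z$ with second coordinate equal to $z_2$; every other element has second coordinate at most $z_2-1$.

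Armed with this, the bound on bad points is immediate. A good transition $\xi^{(i)}\to \xi^{(i+1)}=h(\xi^{(i)})$ preserves the second coordinate, while a bad one strictly decreases it by at least $1$. Since $\xi^{(0)}_2=y_2$ and $\xi^{(\nu)}_2\ge 0$, a telescoping sum forces the number $B$ of bad points to satisfy $B\le y_2$.

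For the good points, I would track horizontal displacements. A good transition decreases the first coordinate by exactly $R:=\lfloor 1/q\rfloor+1$, a bad transition by some amount in $[0,L+1]$ (again from the explicit description of $\partial_{\downarrow}U_z$), and the stopping rule $0\in U_{\xi^{(\nu)}}$ forces $\xi^{(\nu)}_1\le L$. Hence
\[
G\cdot R \;=\; y_1-\xi^{(\nu)}_1-\sum_{\text{bad }i}(\xi^{(i)}_1-\xi^{(i+1)}_1) \;\ge\; y_1-L-B(L+1) \;\ge\; y_1-L-y_2(L+1).
\]
Plugging in $y_2\le 2^{-\theta_q^2\a}y_1$, $L\le 2^{\theta_q^2\a(1-\e/2)}$, and the implied $y_1\ge 2^{\theta_q^2\a}$ gives the crude estimate $L+y_2(L+1)\le 3y_1\cdot 2^{-\theta_q^2\a\e/2}=o(y_1)$ as $q\to 0$. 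Using $R\le 1/q+1$ and choosing $q$ small enough depending on $\a,\e$, one concludes $G\ge y_1q/2\ge \lfloor y_1q/2\rfloor$.

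The argument is essentially elementary once the geometry of $\partial_{\downarrow}U_z$ is made explicit; the only delicate step is the quantitative bookkeeping in the last paragraph, where the $\a$-separation between $y_1$ and $y_2$ together with the choice of $L$ must be used to absorb both the residual $\xi^{(\nu)}_1\le L$ and the worst-case horizontal drop $B(L+1)$ of bad transitions into a term that is negligible compared to $y_1$. No deeper obstacle is expected.
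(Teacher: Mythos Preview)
Your proposal is correct and follows essentially the same approach as the paper: both arguments observe that a bad step strictly decreases the second coordinate (giving $B\le y_2$) and then bound the number of good steps from below by comparing the total horizontal displacement $y_1-\xi^{(\nu)}_1$ against the maximal horizontal contribution of the bad steps. Your version is slightly more explicit about the geometry of $\partial_{\downarrow}U_z$ and uses the tighter constant $L+1$ for the horizontal drop of a bad step (the paper writes $L$), but the final asymptotic bookkeeping is the same.
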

\begin{proof}
Given an infection sequence $\mathbf v$ let $n_g$ be the
number of its good points and observe that if $v^{(i)}$ is
bad then $v_2^{(i+1)}<
v_2^{(i)}$ and $v_1^{(i)}-
v_1^{(i+1)}\le L$. Hence, $(n-n_g)\le y_2$ and 
\begin{align*}
    (n-n_g) L+n_g/q \ge y_1-L,
\end{align*}
i.e.\ $n_g\ge q(y_1 -L(1+y_2))$. In particular, if $1\le y_2\le
2^{-\theta_q^2 \a}y_1$ then $n_g\ge \lfloor y_1 q/2\rfloor  $ for $q$
small enough. 
\end{proof} 
For any $y\in \bbZ^d_+$ let $n_y=\lfloor y_1 \frac q2 \rfloor $ and for any given $\mathbf v\in \mathcal{S}(y)$ let
$(w^{(1)}, w^{(2)},\dots,w^{(n_y)})$ be the collection of
the first  $n_y$ good points of $\mathbf v$ ordered from the
last one to the first one. By construction, for all $k,$ $w^{(k-1)} \prec h(w^{(k)}).$ Using Definition \ref{def:1}, the event  $\{\xi(y)=\mathbf
v\} $ implies the event
\[
  G_{\mathbf
      v}:=\cap_k\{\tau_{U_{w^{(k)}}} = \tau_{h(w^{(k)})} ;
    \tau_{h(w^{(k)})}\ge \tau_{w^{(k-1)}}\},
  \]
  and $\tau_y\ge
    \sum_k (\tau_{w^{(k)}}-\tau_{h(w^{(k)})}).$
 Therefore, for all $\l>0$ the definition of the event $G_{\mathbf v}$ together with a repeated use
of the
strong Markov property implies that
\begin{align}
  \label{eq:11}
e^{-\l \bbE_{\o_*}(\t_y)} &\le   \bbE_{\o_*}(e^{-\l \t_y})  \le \sum_{\mathbf v \in
    \mathcal{S}(y)}\bbE_{\o_*}(\id_{G_{\mathbf v}}e^{-\l \sum_{k=1}^{n_y} (\tau_{w^{(k)}}-\tau_{h(w^{(k)})})})\nonumber\\
  &\le  |\mathcal{S}(y)| \max_{\mathbf
    v}\bbE_{\o_*}\big(\id_{G_{\mathbf v}}\prod_{k=1}^{n_y}
    e^{-\l (\tau_{w^{(k)}}-\tau_{h(w^{(k)})})}\big)\nonumber\\
  &\le |\mathcal{S}(y)| F(\l)^{n_y},
\end{align}
where $|\mathcal{S}(y)|$ denotes the cardinality of $\mathcal{S}(y)$ and
\begin{equation}
  \label{eq:12}
F(\l):= \max_{z\in \bbZ^2_+:\, h(z)\in
    \bbZ^2_+}\, \max_{\o:\ \o(h(z))=0,\, \o\restriction_{U_z} =1} \bbE_\o\big(e^{-\l \t_z}\big).
\end{equation}
The next two lemmas provide the necessary bounds on $|\mathcal{S}(y)|$
and $F(\l)$.
\begin{lemma}\label{lem:3} For any $y\in \bbZ_+^2$ with $1\le y_2<y_1
  2^{-\a \theta_q^2}$ as $q\to 0,$ we have
    \begin{align}
        |\mathcal{S}(y)| \le \big(y_1/y_2\big)^{O(y_2)}.
    \end{align}
\end{lemma}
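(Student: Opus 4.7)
The plan is to parametrize each sequence $\mathbf{v}\in \mathcal{S}(y)$ by the data of its bad transitions: because a good point $v^{(i)}$ deterministically yields $v^{(i+1)} = h(v^{(i)}) = v^{(i)} - (\lfloor 1/q\rfloor+1)\mathbf{e}^{(1)}$, a sequence is entirely specified by (i) its total length $n$, (ii) the set of the $n_b$ indices at which the transition is bad, and (iii) the choice of $v^{(i+1)} \in \partial_{\downarrow}U_{v^{(i)}}\setminus\{h(v^{(i)})\}$ at each bad index $i$. The counting thus reduces to bounding the three quantities $n$, $n_b$, and $|\partial_{\downarrow}U_z|$.

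Lemma \ref{lem:2} already gives $n_b\le y_2$. For $n$, I would observe that $v^{(i+1)}\in \partial_{\downarrow}U_{v^{(i)}}$ implies $v^{(i+1)}\prec v^{(i)}$ coordinate-wise, so $\|v^{(i)}-v^{(i+1)}\|_1 = \|v^{(i)}\|_1 - \|v^{(i+1)}\|_1$; combining this with Remark \ref{rem:2} and telescoping gives
\[
n\lfloor 1/q\rfloor \le \sum_i\|v^{(i)}-v^{(i+1)}\|_1 = \|y\|_1 - \|v^{(\nu)}\|_1 \le \|y\|_1 \le 2y_1,
\]
i.e., $n\le 4qy_1$. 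For the downward boundary, a direct inspection of Definition \ref{def:Ux} shows that $\partial_{\downarrow}U_z$ is the union of $\{h(z)\}$, the left column of $B_{z,L}$, and the bottom row of $B_{z,L}$, possibly truncated by the coordinate axes, so $|\partial_{\downarrow}U_z|\le 2L+2$.

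Assembling these three estimates gives
\[
|\mathcal{S}(y)|\le \sum_{n\le 4qy_1}\,\sum_{n_b\le y_2}\binom{n}{n_b}(2L+2)^{n_b}\le O(qy_1\, y_2)\cdot\Bigl(\frac{4eqy_1 L}{y_2}\Bigr)^{y_2},
\]
where the maximum over $n_b\in\{1,\dots,y_2\}$ is attained at $n_b=y_2$ because $qy_1 L/y_2\gg 1$ for $q$ small. The choice $L=\lfloor 2^{\a\theta_q^2(1-\e/2)}\rfloor$ together with $q\le 2^{-\theta_q}$ and the hypothesis $y_1/y_2\ge 2^{\a\theta_q^2}$ then yields $qL \le 2^{\a\theta_q^2(1-\e/2)-\theta_q}\le (y_1/y_2)^{1-\e/2}$ for $q$ small, so $qy_1 L/y_2\le (y_1/y_2)^{2-\e/2}$. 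Raising to the power $y_2$ and absorbing the polynomial prefactor (using $y_2\ge 1$ and $y_1/y_2\gg 1$) gives the target $(y_1/y_2)^{O(y_2)}$. The main subtlety I anticipate is the careful arithmetic interleaving $L$, $q$, $y_1/y_2$, and $\a\theta_q^2$; the bound $|\partial_{\downarrow}U_z|\le O(L)$ itself is straightforward but does require a small case analysis when $B_{z,L}$ crosses an axis.
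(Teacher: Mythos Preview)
Your proof is correct and follows essentially the same approach as the paper: parametrize each sequence by its length, the positions of its bad indices, and the choice of boundary vertex at each bad step; then bound these three quantities respectively by $O(qy_1)$ (via Remark~\ref{rem:2} and telescoping), $y_2$ (via Lemma~\ref{lem:2}), and $O(L)$. The only cosmetic differences are that you use the explicit inequality $\binom{n}{k}\le(en/k)^k$ and then compare $qL$ to $(y_1/y_2)^{1-\e/2}$, whereas the paper keeps the binomial coefficient and absorbs the $L^{y_2}$ factor into an $e^{O(\theta_q^2)y_2}$ term before invoking $y_1/y_2\ge 2^{\a\theta_q^2}$; both routes land on the same bound.
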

\begin{proof}
Recall that a good point of an infection sequence specifies uniquely
the next point of the sequence. Hence, we can reconstruct the full
infection sequence by specifying which points are bad together with their relative position
w.r.t.\ the previous point. Using Remark \ref{rem:2} together with $n_y=\lfloor y_1 \frac q2 \rfloor $, it also follows that the length $n$ of any
infection sequence satisfies $n\in [n_y,q(y_1+y_2)]$. Thus for $q$ small enough
    \begin{align*}
        |\mathcal{S}(y)|
               &\le \sum_{n=n_y}^{\lceil
                 q(y_1+y_2)\rceil} \sum_{m=0}^{y_2} \binom{n}{m}
                 {(2L)}^m \le \sum_{n=n_y}^{\lceil q(y_1+y_2)\rceil}  \binom{n}{y_2} (y_2+1)  (2L)^{y_2}\\
        &\le e^{O(\theta_q^2)y_2}\times O(q)y_1\times \binom{\lceil q(y_1+y_2)\rceil}{y_2}
          \le \big(y_1/y_2\big)^{O(y_2)}.
    \end{align*}
\end{proof}
\begin{lemma}\label{lem:4}Fix $0<\e\ll 1$ and let $T_\a=T_\a(\e,q)=
  2^{\frac{\theta_q^2}{4} ((1+4\a) \wedge 2)(1-2\e)}$. Then for  
any $q$ sufficiently small and any $\l>0$
    \begin{align*}
        F(\l)\le
        e^{-\l T_\a}+ 2^{-\O(\e)\theta_q^2}.
    \end{align*}
\end{lemma}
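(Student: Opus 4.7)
The plan is to start from the standard tail decomposition
\[
\bbE_\o(e^{-\l\t_z}) \;\le\; e^{-\l T_\a}\;+\;\bbP_\o(\t_z \le T_\a),
\]
obtained by splitting on $\{\t_z>T_\a\}$ and bounding $\bbP_\o(\t_z>T_\a)\le 1$. This reduces the lemma to showing
\[
\sup_{\o:\,\o(h(z))=0,\,\o\restriction_{U_z}=1}\bbP_\o(\t_z \le T_\a)\;\le\; 2^{-\Omega(\e)\theta_q^2}.
\]
By monotonicity of the East process in its initial data I may take the worst case $\o\equiv 0$ on $\bbZ^2_+\setminus U_z$, since extra vacancies outside $U_z$ only accelerate infection. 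Flipping $z$ requires either $\o(z-\mathbf e^{(1)})=0$ (arm mechanism) or $\o(z-\mathbf e^{(2)})=0$ (2D bulk mechanism); writing $\t^{(i)}_z$ for the hitting time of $\{\o_{z-\mathbf e^{(i)}}=0\}$, a union bound yields
\[
\bbP_\o(\t_z\le T_\a)\;\le\;\bbP_\o(\t^{(1)}_z\le T_\a)+\bbP_\o(\t^{(2)}_z\le T_\a).
\]

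For $\bbP_\o(\t^{(2)}_z\le T_\a)$ I would invoke Remark~\ref{rem:3} with $y=z-\mathbf e^{(2)}$: since every $w\in H(y,n)$ has $w_2\le y_2-1<y_2$, the cone never meets the removed top row, and so $H(y,n)\subset U_z$ as soon as $n\le L-1$. Picking $n$ of order $L/2$ and recalling that $T(\e)=2^{\theta_q^2/4(1-\e)}$ while $T_\a=2^{\theta_q^2((1+4\a)\wedge 2)/4(1-2\e)}$, one checks by direct computation that $nT(\e)\ge T_\a$, so Remark~\ref{rem:3} gives
\[
\bbP_\o(\t^{(2)}_z\le T_\a)\;\le\; e^{-c\e\theta_q^2 (L/2) 2^{-\theta_q/2}},
\]
which for $\a>0$ is doubly exponentially small in $\theta_q$ and comfortably dominates $2^{-\Omega(\e)\theta_q^2}$.

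For $\bbP_\o(\t^{(1)}_z\le T_\a)$ I plan to split on the event $E$ that some vertex $(y_1-i,y_2-1)$ with $1\le i\le \lfloor 1/q\rfloor$ in the row immediately below the arm becomes vacant by time $T_\a$. A union bound over $i$ combined with another application of Remark~\ref{rem:3} to each $y=(y_1-i,y_2-1)$ (whose cone again avoids the removed top row because $y_2-1<y_2$) will show $\bbP_\o(E)$ is doubly exponentially small. On the complement $E^c$ no south facilitation of any arm site is used before time $T_\a$, and the dynamics restricted to the arm agrees, via a trivial monotone coupling, with the one-dimensional East chain on an interval of length $\sim 1/q$ started from a single vacancy at its leftmost (always unconstrained, since the west neighbour of $h(z)$ is a vacancy in the worst case) site. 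The mean hitting time of the arm tip in this 1D chain is $\lfloor 1/q\rfloor/v(\mathbf e^{(1)})=2^{\theta_q^2/2(1+o(1))}$, and the exponential tail bound for one-dimensional hitting times from \cite{Ganguly-Lubetzky-Martinelli,Blondel} gives $\bbP(\t^{(1),\mathrm{1D}}_z\le T_\a)\le 2^{-\Omega(\e)\theta_q^2}$ because $T_\a\le 2^{\theta_q^2/2(1-2\e)}$.

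The main technical obstacle is making the reduction to the one-dimensional chain on $E^c$ precise: one needs a monotone coupling between the full 2D East process on $U_z\cup\{h(z)\}$ and the pure 1D East chain on the arm that agrees on $E^c$. Once this coupling is in place, the remaining ingredients are already available: Remark~\ref{rem:3} (an immediate byproduct of the proof of Lemma~\ref{lem:1}) handles the two 2D contributions, and the one-dimensional tail bounds of \cite{Ganguly-Lubetzky-Martinelli,Blondel} handle the arm, with the elementary tail decomposition and the monotone worst-case reduction tying everything together.
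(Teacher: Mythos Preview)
Your overall architecture matches the paper's: the same tail split $e^{-\l T_\a}+\bbP_\o(\t_z<T_\a)$, the same conditioning on whether the row $A=\{h(z)+\mathbf e^{(1)}-\mathbf e^{(2)},\dots,z-\mathbf e^{(2)}\}$ below the arm gets infected by time $T_\a$, the same reduction of the arm to a one–dimensional East chain on $\{\t_A>T_\a\}$, and the same use of Remark~\ref{rem:3} to bound $\sum_{a\in A}\bbP_\o(\t_a\le T_\a)$. Your extra split into $\t^{(1)}_z,\t^{(2)}_z$ is a harmless variant.

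The genuine gap is the monotonicity you invoke twice. The East process is \emph{not} attractive: extra vacancies can be legally resampled to particles and then block later updates, so one cannot argue that $\o\equiv 0$ outside $U_z$ is the ``worst case'' for $\t_z$, nor that the arm chain with time–varying left boundary $\{\o_{h(z)}(s)\}_{s\le T_\a}$ is stochastically dominated by one whose leftmost site is always unconstrained. Your description of the comparison chain as ``started from a single vacancy at its leftmost site'' is also off: the arm lies in $U_z$ and therefore starts from all $1$'s. You flag precisely this coupling as the ``main technical obstacle''; the point is that the monotone coupling you are hoping for does not exist.

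The paper sidesteps monotonicity entirely. On $\{\t_A>T_\a\}$ the arm is a $1$D East chain with a time–varying boundary, and the paper applies Corollary~\ref{cor:2} with $d=1$, $L\sim 1/q$, $n=\lfloor\theta_q\rfloor$ directly: the bottleneck of Proposition~\ref{prop:bottleneck_general} is a combinatorial statement about legal paths inside $V_{x,L}$ and holds for any boundary, and the local stationarity of $\mu_{V_{x,L}}$ used in the proof of Corollary~\ref{cor:2} is valid for arbitrary (even time–dependent) boundary data. This yields $\bbP_\o(\{\t_z<T_\a\}\cap\{\t_A>T_\a\})\le O(T_\a)\,2^{-\frac{\theta_q^2}{2}(1-\e)}=2^{-\Omega(\e)\theta_q^2}$ with no comparison argument. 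Your references \cite{Ganguly-Lubetzky-Martinelli,Blondel} give an LLN/CLT for the $1$D front but not the large–deviation lower tail $\bbP(\t_n\le t)$ for $t\ll n/v$ that is needed here; Corollary~\ref{cor:2} is the right tool.
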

\begin{proof}
Fix $z\in \bbZ^2_+$ such that $h(z)\in
\bbZ^2_+$ together with $\omega$ such that $\omega(h(z))=0$ and
$\o\restriction_{U_z}=1$. Let also $A:=\{h(z)+\mathbf e^{(1)}-\mathbf e^{(2)},h(z)+2\mathbf e^{(1)}-\mathbf e^{(2)},\dots,z-\mathbf e^{(2)}\}$.
Then,
\begin{align*}
\bbE_{\o}(e^{-\l\tau_z})
  &\le e^{-\l T_\a}+ \bbP_{\o}(\tau_z<  T_\a)\\
  &\le e^{-\l T_\a}+ \bbP_{\o}(\{\tau_z<  T_\a\}\cap \{\t_A>T_\a\})+
    \bbP_{\o}(\t_A\le T_\a)\\
  &\le e^{-\l T_\a}+ \bbP_{\o}(\{\tau_z<  T_\a\}\cap \{\t_A>T_\a\})+
    \sum_{a\in A}\bbP_{\o}(\t_a\le T_\a).
\end{align*}
Let $\cF_{T_\a}$ be the $\s$-algebra generated by the variables $ \o_z(s),
s\in [0,T_\a]$ where $z\in \{a\in \bbZ^2_+: a\prec h(z)\}\cup
\{a\in \bbZ^2_+: a\prec b \text{ for some $b\in A$}\}$. Clearly
$\{\t_A> T_\a\}\in \cF_{T_\a}$. Moreover, conditionally on $\cF_{T_\a}$ and on the
event $\{\t_A> T_\a\},$ the East process on $A+\mathbf e^{(2)}$ 
coincides up to time $T_\a$ with the
one-dimensional East chain on $A+ \mathbf e^{(2)}$ with a boundary value at
$\{\o_{h(w)}(s)\}_{s\le T}$ which is measurable w.r.t.\ $\cF_{T_\a}$. 
We can then apply Corollary \ref{cor:2} with $d=1$ and
$n=\lfloor\theta_q\rfloor$ to obtain:
\begin{gather} \label{eq:14}
  \bbP_{\o}(\{\tau_z<  T_\a\}\cap \{\t_A>T_\a\})\le
                                            O(T_\a)2^{-\frac{\theta_q^2}{2}(1-\e)}\nonumber\\
  = O\big(
  2^{-\frac{\theta_q^2}{4}((2 - (1+4\a) \wedge 2)(1-2\e)+2\varepsilon)}\big)\le 2^{-\O(\e)\theta_q^2}. 
\end{gather}
Let $n_A= \min_{a\in A} \min_{z'\prec a,\ z'\notin U_z }\|a-z'\|_1,$ 
and observe that $\exists\, \e(\a)>0$ such that $\forall\,\e\le
\e(\a)$ and all $q$ small enough depending on $\e,$ 
$
 T_\a \le n_A\, 2^{\frac{\theta_q^2}{4}(1-\e)}.
$
We can then use Remark \ref{rem:3} to get
that
\[
  \sum_{a\in A}\ \max_{\o:\ \o\restriction_{U_z}=1}\bbP_\o(\t_a\le
  T_\a)\le e^{- \O(\e \theta_q^2\lfloor
    n_A2^{-\frac{\theta_q}{2}}\rfloor)}\le 2^{-\O(\e)\theta_q^2},
\]
because $n_A\ge L -2^{\theta_q}\gg 2^{\theta_q/2}$. 
\end{proof}
We can now conclude the proof. By combining the two lemmas above and
choosing $\l =\l_\a(q)=T_\a^{-1}\e \theta_q^2,$ we get from \eqref{eq:11} that
\begin{align*}
   e^{-\l \bbE_{\o^*}(\t_y)}  &\le |\mathcal{S}(y)|
                             F(\l)^{n_y}\le
    \big(y_1/y_2\big)^{O(y_2)}e^{-\O(\e)\theta_q^2 n_y},
\end{align*}
where we recall that $n_y:=\lfloor y_1 \frac q2 \rfloor $. If $y= \lfloor
n\mathbf{x} \rfloor$ with $\mathbf{x}$ such that
$0<x_2\le x_1 2^{-\theta_q^2\a},$ the above inequality implies 
\[
  \bbE_{\o^*}(\t_{\lfloor n\mathbf{x} \rfloor})\ge \O(q\, T_\a)\times n\quad
  \text{as $n\to \infty$}.
\]
In particular $v_{\rm max}(\mathbf x)\le 2^{-\frac{\theta_q^2}{4} ((1+4\a) \wedge 2)(1-o(1))} $.    
\subsection{Proof of Theorem \ref{thm:2}}
We begin with the case $\d=0$.

Recall Remark \ref{rem:1} and 
that $v_{\rm min}(\mathbf e^{(i)})=v_{\rm max}(\mathbf e^{(i)})=
2^{-\frac{\theta_q^2}{2}(1+o(1))}\ \forall \, i\in [d].$ Take $0<\e\ll
1$ and let 
$x_t=\lfloor 2^{-\frac{\theta_q^2}{2d}(1+\e)}\,t\rfloor\, \mathbf e^{(1)},
\ t\gg 0.$ By construction $x_t\in \L(\d=0,\e,t)$. Let also
\[
  A_t=\{\o:\ \exists\, y\in \{x_t-\lfloor 2^{2\theta_q}\rfloor \mathbf e^{(1)},\dots, x_t\} \text{ such that }
  \o_y(t)=0\},
\]
and use
\[
\|\nu_t^{\d,\e} -  \mu_{\L(\d,\e,t)}\|_{TV}\ge |\mu(A_t)-\nu_t^{\d,\e}(A_t)|.
\]
For any $t$ large enough $\mu(A_t)=1-e^{-\O(2^{\theta_q})},$ while Remark
\ref{rem:3} gives
$\limsup_{t\to \infty}\nu_t^{\d,\e}(A_t)=0$. Hence,
\[
  \liminf_{q\to 0}\liminf_{t\to \infty}\|\nu_t^{\d,\e} -
  \mu_{\L(\d,\e,t)}\|_{TV}=1.
\]
We now consider the case $0<\d < 1.$

Fix $0<\e\ll 1$ and observe (see \cite[Lemma 5.5]{CFM3}) that equilibrium in the region $\L(\d,\e,t)$ is
achieved very rapidly, within a time $O(\log(|\L(\d,\e,t)|)^{4d}),$ if the initial
configuration has a vacancy in every
interval of $\L(\d,\e,t) $ parallel to a coordinate direction and containing $O((\log(|\L(\d,\e,t)|)^2)$ vertices. Hence, if the above condition is
satisfied by the East process at time $t/2$ then at time $t$  the measure $\nu_t^{\d,\e} $ will be very
close to $\mu_{\L(\d,\e,t)}$ in the total variation distance. The
second observation 
(cf. \cite[Lemma 5.3]{CFM3}) is the following. Recall that $\t_x$ is the first time a
vacancy appears at $x$. Then the above requirement for the East
process at time $t/2$ will be fulfilled with w.h.p. if $\t_x\le t/2 -
O((\log(|\L(\d,\e,t)|)^2)\ \forall \,
x\in\L(\d,\e,t)$. 

A more precise formulation of the above two steps is as follows.
For any $t$ large
enough depending on $q,\d,\e$
\begin{equation}
 \label{eq:TV} \|\mu_{\L(\d,\e,t)}-\nu_t^{\d,\e}\|_{TV}\le \e +  \sum_{x\in
    \L(\d,\e,t)}\bbP_{\o^*}(\t_x>t/3).   
\end{equation}
We decided to skip the proof of \eqref{eq:TV} as it follows very closely
the proofs of Lemma 5.3. and 5.5 of \cite{CFM3}.
The proof of the theorem then boils down to proving that the second term in the r.h.s.\ of \eqref{eq:TV} vanishes as $t\to \infty$. For future needs we actually prove a slightly stronger result.
\begin{lemma}\label{lemma:LD} For any $\d, \e$ in $(0,1)$
there exists  $ q(\d,\e)>0$ such that for
any $q\le q(\d,\e)$ and all $t$ large enough
\begin{equation}
 \label{eq:LD}
\sup_{y\in \bbZ^d_+} \sum_{x\in \L(\d,\e,t)+y}\  \sup_{\o:\, c_y(\o)=1}\bbP_{\o}\big(\t_x>t/3)\le e^{-\O\big(2^{-(1+\e/2)\frac{\theta_q^2}{2d}}\,\log^2(t)\big)}.
\end{equation}

\end{lemma}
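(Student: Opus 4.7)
The plan is to chain together the single-step mean hitting time bounds of Lemma~\ref{lem:1} along a path from $y$ to $x$, and then convert these first-moment estimates into exponential tail bounds via an iterated Markov (Chernoff) argument. This parallels the proof of the lower bound on $v_{\min}(\mathbf x)$ in Section~\ref{sec:lower bound vmin}, upgrading from control of $\bbE(\tau_x)$ to control of the full upper tail of $\tau_x$.

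Fix $y\in\bbZ^d_+$ and $x\in\L(\delta,\epsilon,t)+y$. Since the displacement $x-y$ satisfies $\min_{i,j}(x_i-y_i)/(x_j-y_j)\ge\delta$ and $\|x-y\|_1\le 2^{-(1+\epsilon)\theta_q^2/(2d)}\,t$, one can interpolate between $y$ and $x$ by waypoints $y=x^{(0)},\ldots,x^{(n)}=x$ such that each $x^{(i)}-x^{(i-1)}$ is the upper corner $x_{\L_i}$ of a $(0,\kappa)$-outstretched box $\L_i$ of minimum side of order $\ell_q:=\lfloor 2^{\theta_q^{3/2}}\rfloor$, with $\kappa=\kappa(\delta)$; in particular $n=\Theta(\|x-y\|_1/\ell_q)\vee 1$. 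Because $\phi(0;d)=1/d$, Lemma~\ref{lem:1} gives, uniformly in $i$ and for all $q$ small enough depending on $\delta,\epsilon$,
\[
\max_{\omega':\,\omega'_{x^{(i-1)}}=0}\bbE_{\omega'}\bigl(\tau_{x^{(i)}}\bigr)\le \bar T := 2^{(1+\epsilon/3)\theta_q^2/(2d)}.
\]
The initial condition $c_y(\omega)=1$ suffices to start the chain: $y$ is unconstrained, so a vacancy appears at $y$ after an expected time $O(1/q)$, negligible compared to $\bar T$.

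Together with the strong Markov property at each $\tau_{x^{(i-1)}}$, Markov's inequality shows $\bbP(T_i>2\bar T\mid\cF_{\tau_{x^{(i-1)}}})\le 1/2$ for $T_i:=\tau_{x^{(i)}}-\tau_{x^{(i-1)}}$, so $\tau_x$ is stochastically dominated by $2\bar T\sum_{i=1}^{n}G_i$ with $\{G_i\}$ i.i.d.\ Geometric$(1/2)$. A standard Chernoff bound then yields, for any integer $m\ge 2n$,
\[
\bbP_\omega(\tau_x>2m\bar T)\le \bbP\Bigl(\textstyle\sum_{i=1}^{n}G_i>m\Bigr)\le 2^{-\Omega(m)}.
\]
Taking $m=\lfloor t/(6\bar T)\rfloor$ and using the $\L(\delta,\epsilon,t)$-hypothesis (which gives $n\bar T\ll t$) yields $\bbP_\omega(\tau_x>t/3)\le 2^{-\Omega(t/\bar T)}$, strictly stronger than required. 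In the complementary short-distance regime $\|x-y\|_1<\ell_q$, where the natural chain has only $n=1$ hop, one uses Lemma~\ref{lem:1} with a single box containing $x$ plus an extra waypoint past $x$ to obtain $\bbE(\tau_x)\le C\bar T$; iterated Markov then gives $\bbP_\omega(\tau_x>k\bar T)\le 2^{-\Omega(k)}$, and $k=\log^2 t$ produces the stated $\exp(-\Omega(\log^2 t/\bar T))$ bound. Summing the pointwise estimates over $x\in\L(\delta,\epsilon,t)+y$ (a set of cardinality $O(t^d)$, polynomial in $t$) and taking the supremum over $y$ then establishes \eqref{eq:LD}.

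The main technical obstacle is the uniform construction of the outstretched boxes $\L_i$ along every $(y,x)$-path: one must verify the aspect-ratio bound $\le\kappa(\delta)$ (via the $\delta$-pinched ratios of $x-y$) and the minimum-side condition of Lemma~\ref{lem:1} simultaneously for each hop. A secondary subtlety is the short-distance regime, where the chain has only one (or a few) hops and the $\log^2 t$ factor arises not from chaining but from iterating the Markov inequality on the total hitting time.
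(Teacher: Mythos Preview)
Your waypoint-chaining strategy and your use of Lemma~\ref{lem:1} for the mean of each hop match the paper's proof. The gap is in converting these conditional mean bounds into a tail bound on $\tau_x$.

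From $\bbE[T_i\mid\cF_{\tau_{x^{(i-1)}}}]\le\bar T$ Markov gives $\bbP(T_i>2\bar T\mid\cF_{\tau_{x^{(i-1)}}})\le 1/2$, but this does \emph{not} imply $T_i\preceq 2\bar T\,G_i$ with $G_i$ Geometric$(1/2)$: that would require $\bbP(T_i>2k\bar T\mid\cF_{\tau_{x^{(i-1)}}})\le 2^{-k}$ for every $k$, i.e.\ the ability to restart after each failed block of length $2\bar T$. At time $\tau_{x^{(i-1)}}+2\bar T$ the vacancy at $x^{(i-1)}$ may well have been filled, so the hypothesis $\omega_{x^{(i-1)}}=0$ of Lemma~\ref{lem:1} is lost and you cannot iterate. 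The same flaw affects your ``iterated Markov'' in the short-distance regime: a single conditional mean bound yields one factor of $1/2$, not a geometric tail.

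The paper resolves this with a truncation. Setting $M=\log(t)^{5d}\,2^{(1+\e/2)\theta_q^2/(2d)}$ it writes
\[
\bbP_\omega(\tau_x>t/3)\le \bbP_\omega\Bigl(\sum_n\Delta_n\id_{\{\Delta_n\le M\}}\ge t/3\Bigr)+\sum_n\sup_{\omega':\,\omega'_{x^{(n-1)}}=0}\bbP_{\omega'}(\Delta_n\ge M).
\]
The second sum is controlled by the genuine tail estimate of Lemma~\ref{lem:election} (applied with $\ell=\log^2 t$), giving $e^{-\Omega(q\log^2 t)}$; this is where a bound beyond the mean is indispensable. For the first term one uses exponential Chebyshev with $\lambda=2^{-(1+\e/2)\theta_q^2/(2d)}\log^2(t)/t$: the truncation ensures $\lambda M\le 1$, hence $e^{\lambda\Delta_n\id_{\{\Delta_n\le M\}}}\le 1+e\lambda\Delta_n$, and now the mean bound of Lemma~\ref{lem:1} together with strong Markov controls each factor. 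The outcome is $e^{-\lambda t/6}$, matching the stated bound. Note in particular that the $\log^2 t$ (rather than your claimed $t$) in the exponent is forced by the $e^{-cq\ell}$ term of Lemma~\ref{lem:election}.
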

\begin{proof}[Proof of the lemma]
Fix $y\in \bbZ^d_+$ together with $\o$ such that $c_y(\o)=1$. In the sequel all estimates will be uniform in $y,\o$. Fix $x\in \L(\d,\e,t)+y$ and let $\mathbf x= (x-y)/|x-y|$ be the associated unit vector in
$\bbR^d_+$. Clearly the components of $\mathbf x$ satisfy $\min_{i,j}\mathbf x_i/\mathbf x_j\ge
\d$. Let $\ell_q= 2^{\theta_q^{3/2}}$, let
$n_x=\lfloor |x-y|/\ell_q\rfloor,$ and define the sequence of vertices $\{x^{(n)}\}_{n=0}^{n_x+1}$ by
$x^{(n)}=\lfloor n\ell_q  \mathbf x\rfloor$ if $0\le n\le n_x$ and $x^{(n_x+1)}=x$. By construction $
|x^{(n+1)}-x^{(n)}|\le \ell_q+1,$ and 
 $\exists \,\k(\d)\ge 1, q(\d)<1$ such that $\forall \, q\le q(\d)$ 
\[
  \max_{0\le n\le
    n_x}\max_{i,j}\frac{(x^{(n+1)}-x^{(n)})_i}{(x^{(n+1)}-x^{(n)})_j} \le
  \k(\d).
  \]
For the East process with initial condition $\o$ recursively define
  \begin{align*}
    \t^{(0)} =\inf\{s\ge 0, \o_{x^{(0)}}(s)=0\},
    \quad \t^{(n)}=\inf\{s\ge \t^{(n-1)}:
              \ \o_{x^{(n)}}(s)=0\}, \end{align*}
and set $\D_n=\t^{(n)}-\t^{(n-1)}$. 
Finally, let $M= \log(t)^{5d} \times 2^{\frac{\theta_q^2}{2d}(1+\e/2)}.$  Using $\t_x\le
\sum_{n=1}^{n_x+1}\D_n$ we write 
\begin{gather}
\bbP_{\o}\big(\t_x\ge t/3\big)\nonumber\\\le 
\bbP_{\o}\big(\sum_{n=1}^{n_x+1}\D_n \id_{\{\D_n\le M\}}\ge t/3\big)
 +
\sum_{n=1}^{n_x+1}   \sup_{\o:
    \, \o_{x^{(n-1)}}=0}\bbP_{\o}\big(\D_n\ge M\big).
\label{eq:88}
\end{gather}
In order to bound from above the second term in \eqref{eq:88} we
apply Lemma \ref{lem:election} to $x= x^{(n-1)},$ $\L$ the box with sides $L_i= x^{(n)}_i-x^{(n-1)}_i$, $t=M$, and $\ell=\ell_t=\log^2(t)$ to get \[
\sup_{\o:
    \, \o_{x^{(n-1)}}=0}\bbP_{\o}\big(\D_n\ge M\big) \le c' M \ell_t^d e^{-cq \ell_t} + 2^{\theta_q
  (\ell_t+\ell_q +1)^d - M \ell_t^{-d} 2^{-\frac{\theta_q^2}{2}(1+\e)}}.
\]
Using $M\ell_t^{-d}=\O(\log(t)^{3d})$ as $t\to +\infty$, for any $t$ large enough depending on $q$ the second term in
the r.h.s.\ of \eqref{eq:88} satisfies
\begin{equation}
  \label{eq:25}
  \sum_{n=1}^{n_x+1}   \sup_{\o:
    \, \o_{x^{(n-1)}}=0}\bbP_{\o}\big(\D_n\ge M\big) \le e^{-\O(q \log^2(t))}. 
\end{equation}
We now tackle the first term in the r.h.s.\ of \eqref{eq:88} via the
exponential Chebyshev inequality with $\l=
2^{-\frac{\theta_q^2}{2d}(1+\e/2)}\, \log^2(t)/t 
$. Using the
strong Markov property and $\l M\le 1$ for any large enough $t$ we obtain
 \begin{gather*}
   \bbP_{\o}\big(\sum_{n=1}^{n_x+1}\D_n \id_{\{\D_n\le M\}}\ge t/3\big)
   \le e^{-\l t/3}\times \bbE_{\o}\big(\prod_{n=1}^{n_x+1} e^{\l \D_n \id_{\{\D_n\le M\}}} \big)\\
  \le e^{-\l t/3}\times \Big(\sup_n\sup_{\o:
    \, \o_{x^{(n-1)}=0}}\bbE_\o\big(e^{\l \D_n \id_{\{\D_n\le M\}}}
 \big )\Big)^{n_x+1}\\
  \le e^{-\l t/3}\times \Big(1+ e\l \sup_n\sup_{\o:
    \, \o_{x^{(n-1)}}=0}\bbE_\o\big(\D_n\big)\Big)^{n_x+1},
   \end{gather*}
where we used  $e^a\le 1+e
 a, \ \forall \, 0\le a\le 1$ in the last inequality. We can finally
 appeal to Lemma \ref{lem:1} to get that for all $q$ small enough
 depending on $\d,\e$
 \begin{gather*}
 1+ e \l \sup_n\sup_{\o:
    \, \o_{x^{(n-1)}}=0}\bbE_\o\big(\D_n\big)
 \le 1+ e \l\, 2^{(1+\e/2)\frac{\theta_q^2}{2d}} \le  e^{e\log^2(t)/t}.
 \end{gather*}
In conclusion,
 \begin{gather}
\label{eq:1000} \bbP_{\o}\big(\sum_{n=1}^{n_x+1}\D_n \id_{\{\D_n\le M\}}\ge t/3\big) 
     \le e^{-\l t/3 + e (n_x+1)\log^2(t)/t }\le e^{-\l t/6},
 \end{gather}
where we used $(n_x+1) \le |x-y|+1 \le t\,
2^{-\frac{\theta_q^2}{2d}(1+\e)}+1$ to obtain the last inequality for
$q$ small enough depending on $\e$. The claim of the lemma now follows from \eqref{eq:88},\eqref{eq:25} and \eqref{eq:1000}. \end{proof}
\subsection{Proof of \texorpdfstring{\cref{thm:3}}{Theorem 3}}
Using Remark \ref{rem:1} $d(t)\ge \bar d(t),$ where $\bar d(t)$ is defined as $d(t)$ but for the \emph{one dimensional} East chain on $\{0,\dots,n\}$. Hence \eqref{eq:cutoff1} follows directly from the cutoff result for the latter chain (see \cite[Theorem 2]{Ganguly-Lubetzky-Martinelli}).
We now turn to the proof of \eqref{eq:cutoff2}. 

Let $w_n= n^{2/3}$ and let $\hat T_n= T_n + w_n/2$. As in the proof of \cref{thm:2} (see \eqref{eq:TV} and the explanation immediately before) the following can be proved by following very closely the proof of Lemma 5.3 and Lemma 5.5 of \cite{CFM3}.
\begin{lemma}
\label{lem:cutoff}
For any $q\in (0,1)$ 
\begin{equation}
\label{eq:cutoff4}
\limsup_{n\to \infty} d(T_n+w_n)\le \limsup_{n\to \infty}\max_{\o\in \O_{\L_n}}\bbP_\o(\exists\, x\in \L_n: \ \t_x \ge \hat T_n).    
\end{equation}
\end{lemma}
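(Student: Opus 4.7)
\medskip

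The plan is to apply the strong Markov property at time $\hat T_n$ so as to split $[0, T_n + w_n]$ into an \emph{infection phase} of length $\hat T_n$ and an \emph{equilibration phase} of length $w_n/2 = n^{2/3}/2$. Setting $B_n := \{\exists\, x \in \L_n:\ \t_x \ge \hat T_n\}$ and using the triangle inequality, one obtains
\begin{equation*}
\|\bbP_\o^{T_n + w_n} - \mu_{\L_n}\|_{TV} \le \bbP_\o(B_n) + \bbE_\o\big[\id_{B_n^c}\,\|\bbP^{w_n/2}_{\o(\hat T_n)} - \mu_{\L_n}\|_{TV}\big].
\end{equation*}
The supremum over $\o$ of the first term coincides with the right-hand side of \eqref{eq:cutoff4}, so everything will reduce to showing that the second term is $o(1)$ as $n\to\infty$.

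To handle the second term I would follow the two-step structure of the proof of \cref{thm:2}, which in turn mimics the arguments of Lemmas 5.3 and 5.5 of \cite{CFM3}. First, introduce the buffer $M_n := C_q (\log n)^{4d}$ and replace $\hat T_n$ by $\hat T_n - M_n$ in the definition of $B_n$, absorbing the negligible difference into the first term of the decomposition. On the modified event $B_n^c$, every vertex of $\L_n$ has been infected at least $M_n$ time units before $\hat T_n$. The oriented local stationarity of the East process (Proposition 3.1 of \cite{CFM3}) together with local exponential ergodicity (Theorem 4.1 of \cite{CFM3}) then imply that the marginal law at time $\hat T_n$ of $\o(\hat T_n)$ on any axis-parallel segment $I\subset \L_n$ of length $\ell_n := C_q' \log^2 n$ is $o(1)$-close in TV to the Bernoulli($p$) product measure $\mu_I$. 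A union bound over the $O(n^d)$ such segments yields the \emph{well-spread vacancy property} with probability $1-o(1)$: every axis-parallel segment of length $\ell_n$ in $\L_n$ contains at least one vacancy in $\o(\hat T_n)$.

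Given the well-spread vacancy property of the starting configuration, the local relaxation estimate of \cite{CFM3} (or alternatively Theorem 2.2 of \cite{Laure2}) shows that the East chain on $\L_n$ reaches $\e$-TV distance from $\mu_{\L_n}$ within time $O(\log^{4d} n)$. Since $w_n/2 = n^{2/3}/2 \gg \log^{4d} n$, this forces the second term in the decomposition above to vanish as $n\to \infty$ and completes the argument.

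The main obstacle is the passage from the pointwise statement "every vertex has been infected by time $\hat T_n - M_n$" to the global statement "$\o(\hat T_n)$ has well-spread vacancies". This requires the delicate interplay between the oriented local stationarity (which controls the marginal on a small window once an infection has occurred) and the simple Bernoulli estimate $1-(1-q)^{\ell_n}$ for the presence of a vacancy in a segment of length $\log^2 n$. Fortunately, both ingredients are carried out in full detail in Lemmas 5.3 and 5.5 of \cite{CFM3} in the context of \cref{thm:2}, and the same arguments transfer to the present setting essentially verbatim upon replacing the slab $\L(\d,\e,t)$ there by the full box $\L_n$ and the time $t/3$ there by $\hat T_n$.
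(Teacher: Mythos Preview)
Your approach is exactly the one the paper has in mind: it does not give a proof but simply points to \eqref{eq:TV} and to Lemmas~5.3 and~5.5 of \cite{CFM3}, i.e.\ the same two-step structure (well-spread vacancies after infection, then fast relaxation in time $O(\log^{4d}n)$) that you outline.

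One small imprecision: your buffer is placed on the wrong side of $\hat T_n$. Replacing $\hat T_n$ by $\hat T_n-M_n$ in the definition of $B_n$ produces the larger event $B_n'\supset B_n$, and ``absorbing the negligible difference into the first term'' would turn the bound into $\bbP_\o(B_n')$ rather than $\bbP_\o(B_n)$; there is no a~priori reason why $\bbP_\o(B_n')-\bbP_\o(B_n)=o(1)$. The clean fix is to keep $B_n$ as stated and instead spend the buffer \emph{inside} the $w_n/2$ window: on $B_n^c$ every vertex is infected by time $\hat T_n$, so by the argument of \cite[Lemma~5.3]{CFM3} the well-spread vacancy property holds w.h.p.\ at time $\hat T_n+M_n$, and the remaining $w_n/2-M_n\gg \log^{4d}n$ suffices for the relaxation step. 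With this adjustment your decomposition yields exactly \eqref{eq:cutoff4}.
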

We will now prove that for $q$ small enough 
\begin{equation}
    \label{eq:cutoff4bis}
 \limsup_{n\to \infty}\max_{\o\in \O_{\L_n}}\sum_{x\in \L_n}\bbP_\o(\t_x \ge \hat T_n)=0.
\end{equation} 
We will give the full details for $d=2$ and only sketch the additional  steps needed for $d\ge 3$. In the sequel $\e$ will be a small positive constant, $q$ will be assumed to be sufficiently small depending on $\e, $ and $c(q)$ will denote a positive constant depending on $q$ whose value may change from line to line.   

The intuition behind \eqref{eq:cutoff4bis} is as follows. Fix $x\in \L_n$ and w.l.o.g. suppose that $x_1= \max(x_1,x_2)$. Then the infection time $\t_x$ should be dominated by the sum of the infection time of the vertex $ x'=(x_1-x_2,0)$ plus the infection time of $x$ starting from $\o_{\t_{x'}}$. Using \cite[Theorem 2]{Ganguly-Lubetzky-Martinelli} the first time is, with great accuracy, $(x_1-x_2)/v,$ while part (A) of \cref{thm:1} suggests that w.h.p. the second time is $O\big(x_2/v_{\rm min}(\hat {\mathbf e})\big)$ where $\hat{\mathbf e}= (\frac{1}{\sqrt{2}},\frac{1}{\sqrt{2}})$. Hence, we expect $\t_x$ to satisfy w.h.p. 
\[
\t_x \lesssim (x_1-x_2)/v+ x_2/v_{\rm min}(\hat{\mathbf e})\lesssim n/v \quad \forall x\in \L_n,
\] 
because $v_{\rm min}(\hat{\mathbf e})\gg v$ for $q$ small enough. In other words, the time needed to infect all vertices of $\L_n$ should be dominated by the time needed to infect at least once all vertices of the form $x=(j,0)$ or $x=(0,j), j\in \{0,\dots,n\}.$ In turn,  using the one dimensional cutoff result, the latter time is smaller than $\hat T_n$ w.h.p.

We will now detail the intuition above. We cover $\L_n$ with two regions:
\begin{align*}
\L_n^{(1)}&=\{x\in\L_n: \max_{i}x_i\le \log(n)^4 \},\\
 \L_n^{(2)}&=\{x\in \L_n: \max_{i}x_i\ge \log(n)^4\}, 
\end{align*}
and we will prove that
\begin{equation}
\label{eq:cutoff5}
\limsup_{n\to \infty}\max_{\o\in \O_{\L_n}}\sum_{x\in \L^{(i)}_n}\bbP_\o(\t_x \ge \hat T_n)=0, \ \forall \, i\in [2].
\end{equation}
\begin{enumerate}
\item[$(i=1)$] W.l.o.g. fix $x \in \L_n^{(1)}$ with $x_2\le x_1$ and write 
$\hat x$ for the vertex $(x_1,0)$. Using the strong Markov property we get 
\begin{gather*}
\max_\o\bbP_\o(\t_x\ge \hat T_n)
\le \max_\o\bbP_\o(\t_{\hat x}\ge \hat T_n - w_n/4) + \max_{\o: \, \o_{\hat x}=0}\bbP_{\o}(\t_x>w_n/4). 
\end{gather*}
Using once again \cite[Theorem 2]{Ganguly-Lubetzky-Martinelli} \[
\limsup_{n\to \infty}\sum_{x\in \L_n^{(1)}}\max_\o\bbP_\o(\t_{\hat x}\ge \hat T_n - w_n/4)=0.
\]
Notice that $\|x-\hat x\|_1=2x_2\le \log(n)^4\ll w^{3/8}_n$. Hence, the term $\max_{\o: \, \o_{\hat x}=0}\bbP_{\o}(\t_x>w_n/4)$ can be bounded from above by applying Lemma \ref{lem:election} with $\L=\{0\}\times \{x_2\}$, the vertex $x$ equal to $\hat x$, $t=w_n/4$ and e.g. $\ell=w_n^{1/4}$. Using \eqref{eq:24} for any $n$ large enough we get 
\[
\max_{\o: \, \o_{\hat x}=0}\bbP_{\o}(\t_x>w_n/4)\le e^{-c(q)w_n^{1/4}},
\]
so that 
\[
\limsup_{n\to \infty}\sum_{x\in \L_n^{(1)}}
\max_{\o: \, \o_{\hat x}=0}\bbP_{\o}(\t_x>w_n/4)=0.
\]
\item[$(i=2)$] Fix $x\in \L_n^{(2)}$ with e.g.\ $x_2\le x_1$ and $x_1\ge \log(n)^4$. We can assume further that $x_2/x_1 \le 1/2$ since otherwise  $\max_\o\bbP_\o(\t_x\ge \hat T_n)$ could be bounded from above using Lemma \ref{lemma:LD} to get $\max_\o\bbP(\t_x\ge \hat T_n)\le e^{-c(q)\log(n)^2}$. If $x_2=0$ we can simply apply \cite[Theorem 2]{Ganguly-Lubetzky-Martinelli} to get $\max_\o\bbP(\t_x\ge \hat T_n)\le e^{-c(q) n^{1/3}}$ for some constant $c(q)>0$. Otherwise, let $\phi(x)=x_1-x_2$ and set now $\hat x=(\phi(x)-1,0)$. By construction, the direction of the vector $x-(\hat x+\mathbf{e}^{(1)})$ is the $(\frac{1}{\sqrt{2}},\frac{1}{\sqrt{2}})$-direction. Let also $\varphi_n(x)= \max\big(\frac{\phi(x)}{v} + \frac{\phi(x)^{2/3}}{4}, w_n/5\big).$ 
As in the previous step we write 
\begin{align}
\label{eq:cutoff6}&\max_\o\bbP_\o\big(\t_x\ge \hat T_n\big)\nonumber\\\le \max_\o\bbP_\o\big(\t_{\hat x}\ge \varphi_n(x)\big) &+ \max_{\o: \, \o_{\hat x}=0}\bbP_{\o}\big(\t_x>\hat T_n-\varphi_n(x) \big). 
\end{align}
Using \cite[Theorem 2]{Ganguly-Lubetzky-Martinelli}) applied to the interval $\{0,\dots,\phi(x)\}$ we get that the first term in the r.h.s.\ of \eqref{eq:cutoff6} is bounded from above by   $e^{-c(q)w_n^{1/3}}$ for large $n,$ so that 
\[
\limsup_{n\to \infty}\sum_{x\in\L_n^{(3)}}\max_\o\bbP_\o\big(\t_{\hat x}\ge \varphi_n(x)\big) =0.
\]
For the second term in the r.h.s. of \eqref{eq:cutoff6} we crucially observe that
\[
\hat T_n-\varphi_n(x)\ge
\begin{cases}
\frac{w_n}{4} + \frac{x_2}{v} & \text{ if $\frac{\phi(x)}{v} + \frac{\phi(x)^{2/3}}{4}\ge w_n/5 ,$}\\  T_n+\frac{3}{10}w_n &\text{ otherwise.} 
\end{cases} 
\]
In both cases, using $v\le 2^{-\frac{\theta_q^2}{2}(1-\e)},$ we get that 
$\hat T_n-\varphi_n(x)\gg 2^{\frac{\theta_q^2}{4}(1+\e)}\|x-\hat x\|_1.
$ 
Hence, we can apply Lemma \ref{lemma:LD} with 
$y=\hat x +\mathbf e^{(1)}$, $\d=\frac 13$, and $t=3\big(\hat T_n-\varphi_n(x)\big)$
to get that
\begin{align*}
\max_{\o: \, \o_{\hat x}=0}\bbP_{\o}\big(\t_x>\hat T_n-\varphi_n(x)\big)&\le e^{-\O\big(2^{-(1+\e/2)\frac{\theta_q^2}{4}}\,\log^2(\hat T_n-\varphi_n(x)\big)}\\
&\le e^{-c(q) \log(w_n)^2}.    
\end{align*}
In conclusion
\[
\limsup_{n\to \infty}\sum_{x\in\L_n^{(2)}}\max_{\o: \, \o_{\hat x}=0}\bbP_{\o}\big(\t_x>\hat T_n-\varphi_n(x)\big) =0.
\]
\end{enumerate}

We will now briefly discuss the proof of \eqref{eq:cutoff4bis} when $d\ge 3$. 
The proof of \eqref{eq:cutoff5} for $i=1$ does not change. 
The proof for $i=2$ needs instead a few changes. 

Fix $x\in \L_n^{(2)}$ and w.l.o.g. assume that $1\le x_d\le x_{d-1}\le\dots\le x_1$. For $k\in [d-1]$ define recursively 
\[
\hat x^{(0)}=x,\quad \hat x^{(k)}=\hat x^{(k-1)}- \hat x^{(k-1)}_{d-k+1}\sum^{d-k+1}_{j=1}\mathbf e^{(j)},
\]
so that $\hat x^{(k)}_j=x_j-x_{d-k+1}$ if $j<d-k+1 $ and $\hat x^{(k)}_j=0$ otherwise.
Notice that the direction vector $\mathbf w^{(k)}$ corresponding to each $x^{(k-1)}-x^{(k)}$ when the latter is non-zero has the form $\mathbf w^{(k)}= \sum_{j=1}^{d-k+1}\mathbf e^{(j)}$. Hence, using Remark \ref{rem:1} and part (A) of \cref{thm:1}, the corresponding minimal velocity $v_{\min}(\mathbf w^{(k)})$ satisfies $v_{\min}(\mathbf w^{(k)})\ge  2^{-\frac{\theta^2_q}{4}(1+o(1))}\gg v$. Let also
\[
\varphi^{(k-1)}_n(x)=
\begin{cases}
\max\big(2^{\frac{\theta_q^2}{4}(1+\e)}\|x^{(k-1)}-x^{(k)}\|_1, \frac{w_n}{5d}\big) & \text{ if $k\le d-1$},\\
\max\big(\frac{x_1^{(d-1)}}{v} + \frac{(x_1^{(d-1)})^{2/3}}{4}, \frac{w_n}{5d}\big) & \text{ if $k=d$}.
\end{cases}
\]
Using $v\ll 2^{-\frac{\theta^2_q}{4}(1+o(1))}$ for $q$ small enough, it is easy to check that
\[
\sum_{k=1}^{d} \varphi^{(k-1)}_n(x)
\le \frac{9}{20} w_n + \frac{x_1}{v}\le \hat T_n.    
\]
Hence, by setting recursively
$\s_d= 0$ and $\s_{k-1}=\inf\{t> \s_{k}, \o_{x^{(k-1)}}(t)=0\},$
we get
\begin{align*}
\label{eq:cutoff7}
\max_\o\bbP_\o\big(\t_x\ge \hat T_n\big)
&\le \max_\o\bbP_\o\big(\exists k: \s_{k-1}-\s_{k} \ge \varphi^{(k-1)}_n(x)\big)\\
&\le \sum_{k}\max_{\o:\, \o_{x^{(k)}}=0}\bbP_\o\big(\s_{k-1}-\s_{k}\ge \varphi^{(k-1)}_n(x)\big).
\end{align*}
As in the $d=2$ case, we apply Lemma \ref{lemma:LD} to each term in the above sum with $k<d-1$ and \cite[Theorem 2]{Ganguly-Lubetzky-Martinelli}) to the term $k=d-1$ to conclude that the r.h.s. above is smaller than $e^{-c(q) \log(n)^2}$.  \qed

\section{Proof of Proposition \ref{prop:1}}
\label{sec:Proof of Proposition 3.1}
\subsection{Proof of (i)}
We proceed in two steps: we first prove that $\phi(0;d)\ge 1/d$ using
a bottleneck argument and
then, inspired by \cite{CFM2}, that $\phi(0;d)\le 1/d$.
\\

\paragraph{{\bf The lower bound.}}
Let $\L$ be the equilateral box of side length $\lfloor
2^{\theta_q/d}\rfloor$ and let $\L\supset V\supset \{0,x_\L\}$ be such that $\g(V)>0$.
\begin{claim} For any $\e>0$ there exists $q(\e)>0$ such that for any $q\le q(\e)$
$$
\g(V)\le 2^{-(1-\e)\frac{\theta_q^2}{2d}}.
$$
\end{claim}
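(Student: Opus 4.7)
The plan is a standard bottleneck-to-gap reduction.

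First, I would apply the combinatorial construction underlying Proposition~\ref{prop:bottleneck_general} with $L=\lfloor 2^{\theta_q/d}\rfloor$ and $x=x_\L$, producing a set $A\subset\O_\L$ (since $V_{x,L}=\L$) through which every $\s_{\rm max}$-legal path in $\O_\L$ from the all-ones configuration to $\{\o:\o_{x_\L}=0\}$ must pass; the hypothesis $x\notin\L_L$ in the statement of Proposition~\ref{prop:bottleneck_general} is inessential here, as the underlying construction of \cite[Section~4]{CFM2} is box-translation invariant. With $n=\lfloor\log_2 L\rfloor=\tfrac{\theta_q}{d}(1+o(1))$,
\[
n\theta_q-d\binom{n}{2}=\frac{\theta_q^2}{2d}(1+o(1)),
\]
so $\mu(A)\le 2^{-(1-\e/3)\frac{\theta_q^2}{2d}}$ for all sufficiently small $q$.

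Second, I would transfer the bottleneck to the chain on $V$ with maximal boundary conditions by setting
\[
A_V:=\{\o\in\O_V:\,(\o,\mathbf 1_{\L\setminus V})\in A\}.
\]
Since the chain on $V$ with max BC is precisely the chain on $\L$ restricted to updates inside $V$ (sites of $\L\setminus V$ stay frozen at $1$), any legal path in $\O_V$ from $\mathbf 1_V$ to $\{\o_{x_\L}=0\}$ canonically extends to a legal path in $\O_\L$ lying in $\{\eta:\eta\restriction_{\L\setminus V}\equiv 1\}$, which must cross $A$; the $V$-projection of the crossing point then lies in $A_V$. The identity $\mu_V(A_V)=\mu(A\mid\o\restriction_{\L\setminus V}\equiv 1)$, combined with $|\L\setminus V|\le L^d\le 1/q$ and $(1-q)^{-1/q}\to e$, gives $\mu_V(A_V)\le e\,\mu(A)$.

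Third, I would apply the variational inequality $\g(V)\le\cD_V(f)/\var_V(f)$ to the test function $f=\id_{B_0}$, where $B_0\subset\O_V$ is the connected component of $\mathbf 1_V$ in the graph of legal flips on $\O_V\setminus A_V$. The separation yields $\mu_V(B_0^c)\ge q$, while $\mu_V(B_0)\ge p^{|V|}\ge p^{L^d}\to e^{-1}$, so $\var_V(f)\ge c_0 q$ for some $c_0>0$ once $q$ is small. The edge-boundary argument (if $\o\in B_0$, $c_x^{V,\mathbf 1}(\o)=1$, and $\o^x\notin B_0$, then $\o^x\in A_V$, by extending the $\mathbf 1_V$-to-$\o$ path in $B_0$ by the flip $\o\to\o^x$), together with the elementary comparison $\mu_V(\o^x\in A_V)\le (p/q)\mu_V(A_V)$, yields $\cD_V(f)\le O(|V|)\,\mu_V(A_V)$. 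Combining,
\[
\g(V)\le\frac{O(|V|)\mu_V(A_V)}{c_0 q}\le O(|V|/q)\,\mu(A)\le 2^{2\theta_q}\cdot 2^{-(1-\e/3)\frac{\theta_q^2}{2d}}\le 2^{-(1-\e)\frac{\theta_q^2}{2d}}
\]
for $q$ small enough depending on $\e$, since $|V|/q\le L^d/q\le 2^{2\theta_q}$ and the linear-in-$\theta_q$ correction is dominated by the slack $\tfrac{2\e}{3}\cdot\tfrac{\theta_q^2}{2d}$ in the exponent.

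The main obstacle is making precise the two reductions above: extending the combinatorial bottleneck of \cite[Section~4]{CFM2} to the corner vertex $x=x_\L$ (not literally covered by Proposition~\ref{prop:bottleneck_general} as stated), and handling the projection/extension of configurations between $\O_V$ and $\O_\L$ in the bottleneck argument. Both ultimately follow from the monotonicity of legal updates recorded in Section~\ref{sec:bottleneck}, but must be checked explicitly.
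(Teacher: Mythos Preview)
Your proposal is correct and follows essentially the same bottleneck-to-gap approach as the paper: both arguments pull back the combinatorial bottleneck of \cite[Section~4]{CFM2} from $\O_\L$ to $\O_V$ via extension by $1$'s on $\L\setminus V$, and then feed it into the variational characterization of $\g(V)$. The only cosmetic difference is the choice of test function---the paper uses $\id_{A_V}$ directly (with $A_V$ built from the event $A_*$ of \cite[Definition~4.3]{CFM2}, invoking \cite[Remark~4.4]{CFM2} for the variance lower bound and bounding $\mu_V(\partial A_V)$ by $\mu_\L(\partial A_*)$ for the Dirichlet form), whereas you use the indicator of the connected component $B_0$ of $\mathbf 1_V$ in $\O_V\setminus A_V$; both routes yield the same $2^{O(\theta_q)}\cdot 2^{-(1-o(1))\theta_q^2/(2d)}$ bound.
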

\begin{proof}
Let $A_*\subset
\O_\L$ be the event defined in \cite[Definition 4.3]{CFM2} and let
$A_V=\{\o\in \O_V:\ 1_{V^c}\cdot \o\in A_*\},$ where
$1_{V^c}$ denotes the configuration in $\O_{V^c}$ identically equal
to one. As observed in \cite[Remark 4.4]{CFM2} $1_V\notin
A_V$ while the configuration with exactly one vacancy at $x_\L$
belongs to $A_V$. Therefore,
$\var\big(\id_{A_V}\big)\ge (1-q)^{2|V|-1}q=\Theta(q)$ because $|V|\le
1/q$. Next we bound the Dirichlet form of $\id_{A_V}$. Let $\partial
A_V$ consists of those elements of $A_V$ which are connected to
$A_V^c$ via a legal update for the East chain on $V$. Then 
\[
  \cD_V(\id_{A_V})
  \le |\L|\mu_V(\partial A_V)\le |\L|\mu_{V^c}(1_{V^c})^{-1}\mu_\L(\partial A_*)\le 2^{-(1-o(1))\frac{\theta_q^2}{2d}},
\]
where we used \cite[Section 4.3]{CFM2}. The claim now follows from 
the variational characterization of the spectral gap $\g(V)$.
\end{proof}
Since the box $\L$ is $(0,1;\theta_q)$-outstretched, the claim implies
that if $\l\sim \cH(0)$ then $\l\ge 1/d$. Hence $\phi(0;d)\ge 1/d$. 
\\

\paragraph{\bf The upper bound.} 
The proof that $\phi(0;d)\le 1/d$ requires a bootstrap 
procedure like the one introduced in \cite{CFM2}. The
\emph{base case} is Lemma \ref{lem:East} which gives that $\l=1\sim \cH(0)$. We
then prove the \emph{recursive step}, namely that $\l\sim \cH(0)$ implies $F(\l)\sim \cH(0)$, where
\begin{equation}
  \label{eq:mapping}
  F(\l)=((2d-1)\lambda-1)/(d^2\lambda-1)<\l\quad \forall \l\in [1/d,1].
\end{equation}
Since the mapping $F$ has an attractive fixed point in $1/d,$ the sought claim
follows by iteration. 

\paragraph{\it Proof of the recursive step} We find it easier to work with
\emph{equilateral} boxes, i.e.\ $(0,1;\theta_q)$-outstretched
boxes. For this purpose we first introduce a new condition, equivalent
to $\cH(0),$ which only requires a check on the spectral gap of suitable subsets
of equilateral boxes. 

\begin{definition}
        We say that $\l\sim \cH'(0)$ if
        $\forall\,\e>0$ there exists $q(\e)>0$ such that 
        $\forall\,q\le q(\e)$ and for any equilateral box
        $\L$ there exists $\L\supset V\supset \{0,x_{\Lambda}\}$ such that $\gamma(V)\ge
                        2^{-\l (1+\e)\frac{\theta_q^2}{2}}$.
\end{definition}
\begin{lemma}\label{lemma:hprime_h0}
        $\lambda\sim \cH'(0)$ iff $\lambda\sim \cH(0)$. 
      \end{lemma}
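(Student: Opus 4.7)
The forward direction $\cH(0) \Rightarrow \cH'(0)$ is immediate: every equilateral box $\Lambda$ is $(0,1;\theta_q)$-outstretched (its aspect ratio equals $1$), so invoking $\cH(0)$ with $\kappa=1$ yields precisely the good subset required by $\cH'(0)$.

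For the converse $\cH'(0) \Rightarrow \cH(0)$, fix $\kappa \geq 1$, $\varepsilon > 0$, and a $(0,\kappa;\theta_q)$-outstretched box $\Lambda$. By the symmetry of the East process under permutations of $\cB$, I may relabel coordinates so that the side lengths satisfy $L_1 \geq L_2 \geq \ldots \geq L_d \geq L_1/\kappa$. The plan is to build $V$ as a union of good subsets of equilateral subcubes of $\Lambda$ arranged along a staircase path from $0$ to $x_\Lambda$. Set $\ell = L_d$ and select $N = O(\kappa^d)$ equilateral cubes $B^{(0)}, B^{(1)}, \ldots, B^{(N)} \subset \Lambda$ of side $\ell$, with consecutive cubes sharing a $(d-1)$-dimensional face, $B^{(0)}$ having lower corner at the origin, and $B^{(N)}$ containing $x_\Lambda$.

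For each cube I need a good subset. For the origin-based $B^{(0)} = \{0,\ldots,\ell\}^d$, applying $\cH'(0)$ directly produces $W^{(0)} \supset \{0, \ell\mathbf{1}\}$ with $\gamma(W^{(0)}) \geq 2^{-\lambda(1+\varepsilon/2)\theta_q^2/2}$. For the shifted cubes $B^{(k)}$, $k\geq 1$, which are not origin-based, the East chain on them with the maximal boundary condition is non-ergodic (the all-particles configuration is invariant), so $\cH'(0)$ cannot be invoked directly. I would instead apply $\cH'(0)$ to a larger origin-based equilateral box enclosing $B^{(k)}$ and extract, via the oriented Markov structure of the East dynamics, the portion relevant to the shifted cube, viewing the preceding block $W^{(k-1)}$ as a source of dynamic vacancies at the shared interface that plays the role of a mobile origin for $B^{(k)}$. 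Setting $V = \bigcup_k W^{(k)}$ then ensures $\{0, x_\Lambda\} \subset V$.

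To bound $\gamma(V)$ from below, I would perform a block-by-block variance decomposition along the chain. Since each $W^{(k)}$ sits southwest of $W^{(k+1)}$, the oriented structure of East lets one condition on the state along the interface hyperplanes and apply the spectral gap of each block separately, yielding
\[
\gamma(V) \;\geq\; C(\kappa,d)^{-1}\, \min_{k} \gamma(W^{(k)}) \;\geq\; C(\kappa,d)^{-1}\, 2^{-\lambda(1+\varepsilon/2)\theta_q^2/2},
\]
for a $q$-independent constant $C(\kappa,d)$. Since this polynomial factor is swallowed by $2^{-\lambda\varepsilon\theta_q^2/4}$ as $q \to 0$, the required bound $\gamma(V) \geq 2^{-\lambda(1+\varepsilon)\theta_q^2/2}$ follows for all $q$ small enough depending only on $\kappa,\varepsilon$, giving $\lambda \sim \cH(0)$. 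The main obstacle is the shift argument for the non-origin-based cubes $B^{(k)}$, since $\cH'(0)$ only directly controls spectral gaps of subsets of origin-based boxes; transferring the bound relies on carefully exploiting the oriented structure of the East process and the driving role of the preceding block's vacancies, a standard but technical manipulation analogous to the renormalization arguments of \cite{CFM2}.
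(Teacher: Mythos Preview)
The forward direction is fine and matches the paper. The converse, however, has a genuine geometric obstruction.

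You set $\ell=L_d=\min_i L_i$ and tile $\Lambda$ with equilateral cubes $B^{(k)}$ of side $\ell$ sharing $(d-1)$-dimensional faces. Since $\ell=L_d$, every such cube must span the \emph{entire} $d$-th coordinate range $\{0,\dots,L_d\}$. Now $\cH'(0)$, applied to (a translate of) $B^{(k)}$, produces a subset $W^{(k)}$ containing the $\prec$-minimal and $\prec$-maximal corners of $B^{(k)}$. The $\prec$-maximal corner of $B^{(k)}$ has $d$-th coordinate $L_d$, while the $\prec$-minimal corner of the face-adjacent $B^{(k+1)}$ has $d$-th coordinate $0$. Thus the ``exit vertex'' of $W^{(k)}$ is \emph{not} $\prec$-below the ``entry vertex'' of $W^{(k+1)}$, and no vacancy at the former can drive the East dynamics on the latter: the chain on $W^{(k+1)}$ with a single boundary vacancy at the top corner of $B^{(k)}$ is not ergodic, so Lemma~\ref{lem:2 blocks} (or any block-variance argument respecting the East orientation) does not apply. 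Your suggested fix---applying $\cH'(0)$ to a larger origin-based cube enclosing $B^{(k)}$---gives no control over where the resulting good subset meets $B^{(k)}$, so it does not repair the link. The obstruction is intrinsic: once a $\prec$-path reaches height $L_d$ in the $d$-th coordinate it cannot descend, yet your face-adjacent cubes force exactly that.

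The paper avoids this by going in the opposite direction on scales. Instead of a few large equilateral cubes, it partitions each side $\{0,\dots,L_i\}$ into $N+1$ intervals with $N=\min_j L_j$, producing cells $B_{\mathbf j}$ of at most $(\kappa+1)^d$ vertices each. The index set $\Lambda_B=\{0,\dots,N\}^d$ is then \emph{itself} an equilateral box, and $\cH'(0)$ is invoked \emph{once}, on $\Lambda_B$, to obtain $V^*\subset\Lambda_B$ with $V^*\supset\{0,x_{\Lambda_B}\}$ and $\gamma(V^*;q^*)\ge 2^{-\lambda(1+\varepsilon/2)\theta_q^2/2}$. Choosing the facilitating event for each cell to be ``the $\prec$-smallest vertex of $B_{\mathbf j}$ is vacant'' gives $q^*=q$, so this is exactly the East spectral gap on $V^*$. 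Setting $V=\bigcup_{\mathbf j\in V^*}B_{\mathbf j}$ and comparing the $^*$East Dirichlet form to the true one via Lemmas~\ref{lem:trick} and~\ref{lem:East} costs only a factor $2^{O(\kappa^d)\theta_q}$, which is absorbed into the $\varepsilon$. The point is that the single application of $\cH'(0)$ at the coarse-grained level already encodes the correct $\prec$-connectivity, so no ad~hoc chaining between large blocks is needed.
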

\noindent The proof of the lemma is postponed to the appendix.
Next, motivated by \cite[Definition~5.2]{CFM2}, we construct  three
useful auxiliary Markov
chains. The first one, dubbed the \emph{*East chain}, is a natural generalisation of the East chain when the single site state space is a general finite set and not just the set $\{0,1\}$. The other two chains, dubbed the \emph{Knight Chain} and \emph{*Knight Chain} respectively, require a somewhat more involved geometric setting.

\begin{definition}[The *East chain]
\label{def:*East chain} Let $q^*\in (0,1)$ and let ${\{\Omega^*_x,\mu_x^*\}}_{x\in \bbZ^d_+}$ be a family of finite probability spaces. For each $x\in \bbZ^d_+$ let $G^*_x\subset \Omega^*_x$ be an event such that
$\mu^*_x(G^*_x)=q^*.$ In the sequel we will refer to $G_x^*$ as the \emph{facilitating event} at $x$. Let $V\subset \Z_+^d$ be a finite subset that
contains the origin. Then the \emph{$^*$East chain}
on $\Omega^*_V:=\otimes_{x\in V}
\Omega^*_x$
is the continuous time Markov chain, reversible w.r.t.\
$\mu_V^*=\otimes_{x\in V}\mu^*_x,$ evolving as follows. With rate one and independently across $V$ the chain attempts to update its current state $\o_x$ at any given vertex $x\in V$ by proposing a new state $\o^{\rm new}_x$ sampled from $\mu_x^*$. The attempt is successful, i.e. the proposal is accepted iff the constraint $c_x^*(\omega)=1$ where 
\begin{align*}
        c_x^*(\omega)=
        \begin{cases}
                1 &\text{if } x=0\ \text{or}\ \exists \, \mathbf e\in \mathcal{B} \ \text{such that}\
                x-e\in V\ \text{and}\ \omega_{x-\mathbf e}\in
                G^*_{x-\mathbf e},\\
                0 & \text{else.}
        \end{cases}
\end{align*}
\end{definition}
\begin{remark}
If for all $x$ the probability space $\{\O_x^*,\mu_x^*\}$ and the facilitating event $G_x$ coincide with the two points space $\{\{0,1\},\text{Bernoulli(p)}\}$ and with the event $\o_x=0$ respectively, then the *East chain coincides with the standard East chain discussed so far. However, as we will see in the proof of Proposition \ref{lemma:iterate_bound}, in a natural renormalisation procedure in which $\bbZ^d_+$ is partitioned into equal disjoint blocks indexed by $x\in \bbZ^d_+$ and the $0/1$ variables associated to the vertices of each "block" are treated together as a single block-variable, the natural choice for the pair $\big(\O_x^*,\mu_x^*\big)$ is the probability state space $\big(\{0,1\}^{B_x}, \otimes_{i\in B_x}\mu_i\big)$. In this case the natural candidate for the facilitating event $G_x$ is the event that inside the block $B_x$ there is at least one vacancy.     
\end{remark}
As in \cite[Proposition~3.4]{CFM2} it is possible to prove that the 
spectral gap $\gamma^{*}(V)$ of the $^*$East chain in $V$ coincides with the spectral gap
$\gamma(V;q^*)$ of the \emph{standard} East chain with vacancy density
$q^*$.

The construction of the Knight chain and *Knight chain requires first the construction of the
    Knight graph (see \cref{fig:K1}).
\begin{figure}[ht!]
        \begin{center}
                \begin{tikzpicture}[scale=0.95]
                        \begin{scope}[shift={(-4,1.5)}]
                        \draw[gray,step=0.5] (-0.25,-0.25) grid
                                (2.75,2.75);
                                 \draw[fill=gray,opacity=0.5,gray] (0,1.5) -- (1,1.5) --
                                         (0,2.5) -- cycle;
                                \draw[opacity=0.75,gray] (0.5,1.5) node {$\bullet$};
                                \draw[opacity=0.75,gray] (1,1.5) node {$\bullet$};
                                \draw[opacity=0.75,gray] (0,2) node {$\bullet$};
                                \draw[opacity=0.75,gray] (0.5,2) node {$\bullet$};
                                \draw[opacity=0.75,gray] (0,2.5) node {$\bullet$};
                                \draw[thick] (-0,2.25) node[anchor=east] {$E_x$};
                                \draw[thick] (-0.2,1.5)
                                node[anchor=east] {$x$};
                                \draw[thick] (0,-0.2) node[anchor=east] {$0$};
                                \draw[thick, cross] (0,0) node {$\bullet$};
                                \draw[thick, cross] (0,1.5) node {$\bullet$};
                                \draw[thick, cross] (0.5,1) node {$\bullet$};
                                \draw[thick, cross] (0.5,2.5) node {$\bullet$};
                                \draw[thick, cross] (1,0.5) node {$\bullet$};
                                \draw[thick, cross] (1,2) node {$\bullet$};
                                \draw[thick, cross] (1.5,0) node {$\bullet$};
                                \draw[thick, cross] (1.5,1.5) node {$\bullet$};
                                \draw[thick, cross] (2,1) node {$\bullet$};
                                \draw[thick, cross] (2,2.5) node {$\bullet$};
                                \draw[thick, cross] (2.5,0.5) node {$\bullet$};
                                \draw[thick, cross] (2.5,2) node {$\bullet$};
                                \draw[thick,cross] (1.25,-1) node {(A)};
                                \draw[shorten <= 0.1cm, shorten >=0.1cm] (1.5,1.5) to (2.5,2);
                                \draw[shorten <= 0.1cm, shorten >=0.1cm] (1.5,1.5) to (2,2.5);
                                \draw[shorten <= 0.1cm, shorten >=0.1cm] (1.5,1.5) to (0.5,1);
                                \draw[shorten <= 0.1cm, shorten >=0.1cm] (2.5,2) to (2,1);
                                \draw[shorten <= 0.1cm, shorten >=0.1cm] (2,1) to (1,0.5);
                                \draw[shorten <= 0.1cm, shorten >=0.1cm] (2,1) to (1.5,0);
                                \draw[shorten <= 0.1cm, shorten >=0.1cm] (1.5,1.5) to (1,0.5);
                                \draw[shorten <= 0.1cm, shorten >=0.1cm] (1.5,0) to (2.5,0.5);
                                \draw[shorten <= 0.1cm, shorten
                                >=0.1cm] (0,0) to (1,0.5);
                                \draw[shorten <= 0.1cm, shorten >=0.1cm] (0,0) to (0.5,1);
                        \end{scope}
                        \begin{scope}[scale=0.8]
                        \draw[gray,step=0.5cm] (0,0) grid (6.5,6.5);
                        \foreach \i in {0,0.5,...,2}{
                                \foreach \j in {0,0.5,...,2}{ 
                                        \draw (2*\i+\j,\i+2*\j) node {$\bullet$};
                                }
                        }
\draw (0,0)--(2,4)--(6,6)--(4,2)--(0,0);
\foreach \i in {0,1,...,4}{
  \draw [thin] (\i,0.5*\i)--(\i+2,0.5*\i+4);
  \draw [thin] (0.5*\i,\i)--(0.5*\i+4,\i+2);
  }
                        \draw[thick,cross] (1.25,3) node[anchor=east] {$\Lambda^K$};
                        \draw[thick,cross] (6.5,5.8) node[anchor=north] {$x_{\Lambda^K}$};
                        \draw[thick, cross] (3.25,-0.5) node {(B)};

                        \draw[->,thick] (3.6,2.6) to [bend left=45] (9.4+0.5,2.1);
                        \draw[thick, cross] (7,4) node[anchor=south] {$\Phi$};
                        \begin{scope}[shift={(8.5,1.5)}]
                                \draw[thick, cross] (1,-0.5) node {(C)};
                                \draw[gray,step=0.5] (0,0) grid (2,2); 
                                \draw[thick,cross] (2,2)
                                        node[anchor=south west] {$\Phi(x_{\Lambda^K})$};
                                \draw[thick,cross] (2.2,1) node[anchor=west] {$\Phi(\Lambda^K)$};
                        \end{scope}
                        \end{scope}
                \end{tikzpicture} 
        \end{center}
                \caption{(A) A piece of the Knight graph (the black
                  dots and the Knight edges) for $d=2$.
                The gray triangle corresponds to
                the enlargement $E_x$ of $x$. (B) The graph of the largest Knight equilateral box
                $\Lambda^K$ of side length $4$ inside an equilateral
                box of side length $13$. (C) Under the natural
                isomorphism $\Phi$ the graph $\Lambda^K$ becomes an equilateral
                box.}
              \label{fig:K1}
            \end{figure}
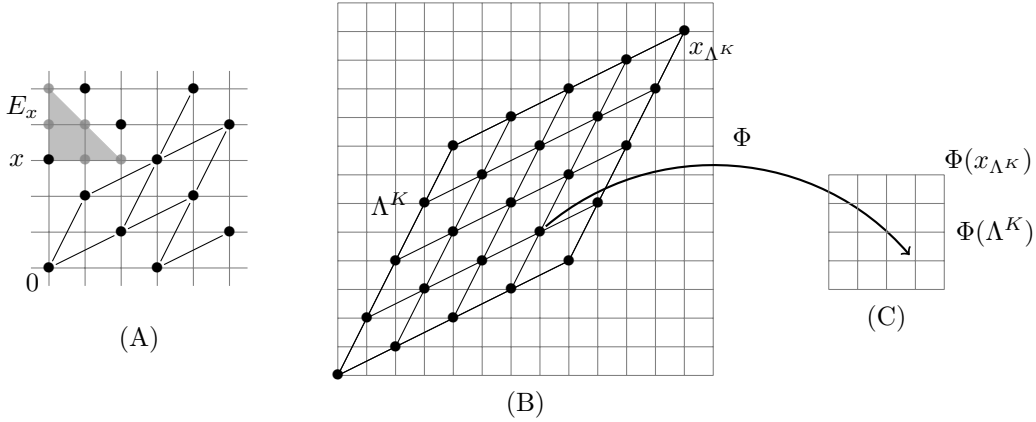
\begin{definition}[The Knight graph]
Given two vertices $x,y\in \bbZ^d$ we say that they form a Knight edge
        if there exists $j\in [d]$ such that $y_i=x_i-1$ for all $i\neq j$ and
        $y_j=x_j-2$ or vice versa. We then consider the unique graph $G=(W,E),
        W\subset \bbZ^d,$ constructed as follows. The vertex set $W$ contains
        the origin and those $x\in \bbZ^d$ which are connected to the origin
        via a path of Knight edges. The edge set $E$ consists of all the Knight
        edges of $W\times W$.  It is easy to see that $G$ is isomorphic to
        $\bbZ^d$ via the natural isomorphism $\Phi$ which is unique if we set
        $\Phi(0)=0$.
\end{definition}
The graph $G$ will inherit  the notation used so far for $\bbZ^d$ via the
       isomorphism $\Phi$. We write $W_+=\Phi^{-1}(\bbZ^d_+)$ and we say that $\L^K\subset W_+$
       is a Knight equilateral box containing the origin if
       $\Phi(\L^K)$ is an equilateral box in $\bbZ^d_+$ containing the origin. In the
       latter case we write $x_{\L^K}\in \L^K$ for the vertex $\Phi^{-1}(x_{\Phi(\L^K)})$. Notice that $\exists\, c>0$ such that for any equilateral box $\Lambda\subset \bbZ^d_+$
containing the origin there exists a Knight
equilateral box $\L\supset \Lambda^{K}\ni 0$ such that $\|x_{\Lambda}-x_{\Lambda^{K}}\|_1\le c.$

Recall that
$\|z-z'\|_1=d+1 \ \forall z,z'\in W$ connected by a Knight edge and $\forall x\in W$ let
$E_x=\{y\in W^c: y\succ x,\ \|x-y\|_1\le d\}$ be the enlargement of
$x$ (see Figure \ref{fig:K1}). The enlargement of a subset $V^K$ of the Knight graph $W$ is the set $EV^K=\cup_{x\in
  V^K}E_x$.
  
We are now ready to define the Knight and *Knight chains. As in Definition \ref{def:*East chain} we assume that we are given $q^*\in (0,1),$ a family ${\{\Omega^*_x,\mu_x^*\}}_{x\in \bbZ^d_+}$ of finite probability spaces and a facilitating event  $G^*_x\subset \Omega^*_x$ for each $x\in \bbZ^d_+$. 
\begin{definition}[The Knight chain ] 
Given an equilateral box $\L\subset \bbZ^d_+$ with origin at $0$ and $V\subset
       \L$ containing the origin, let 
       $V^K:=\Phi^{-1}(V)$. Then the Knight chain on $\O^*_{V^K}$ 
       is the
       image under $\Phi^{-1}$ of the $^*$East chain on $\O^*_V$.
\end{definition}
\begin{definition}[ The *Knight chain]
 \label{def:*K} 
            Given an equilateral box $\L\subset \bbZ^d_+$ with origin at $0$ and $V\subset
       \L$ containing the origin the *Knight chain on $\O^*_{EV^K\cap \L}$ is the continuous
       time Markov chain evolving as
       follows. At any legal update at $z\in V^K$ of the Knight chain
       on $\O^*_{V^K}$  
       the whole configuration in $E_z\cap \L$ is resampled from
       $\mu^*_{E_z\cap \L}$.
\end{definition} 
It is immediate to verify that the *Knight chain is reversible
w.r.t.\ $\mu^*_{EV^K\cap \L}$ with a positive spectral gap
$\g^{*K}(EV^K\cap \L).$ In the
appendix will prove the following result:
\begin{lemma}
\label{lemma:K}  
$
\g^{*K}(EV^K\cap \L)=\g(V;q^*).  
$
\end{lemma}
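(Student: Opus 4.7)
\medskip
\noindent\textbf{Proof plan for Lemma~\ref{lemma:K}.} The plan is to reduce the claim to the identity $\gamma^*(V)=\gamma(V;q^*)$ already recalled after Definition~\ref{def:*East chain}, by showing that, up to the Knight-graph isomorphism $\Phi$ and a harmless inflation of single-site state spaces, the $^*$Knight chain is itself a $^*$East chain on $V$.

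The first step will be a geometric observation: any two distinct Knight vertices lie at $\ell_1$-distance at least $d+1$, while each enlargement $E_z$ is contained in the $\ell_1$-ball of radius $d$ around $z$. Hence the enlargements $\{E_z\}_{z\in V^K}$ are pairwise disjoint subsets of $W^c$, so that
\[
EV^K\cap\L \;=\; \bigsqcup_{z\in V^K}\bigl(E_z\cap\L\bigr),\qquad \mu^*_{EV^K\cap\L}\;=\;\bigotimes_{z\in V^K}\mu^*_{E_z\cap\L}.
\]
In particular the $^*$Knight chain is reversible with respect to this product measure, and its Dirichlet form splits as a sum indexed by Knight vertices in which the $z$-th term is a block-variance on $E_z\cap\L$ weighted by a constraint depending only on neighbouring blocks.

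The core step is then to recognise this block structure as a $^*$East chain on the Knight graph with inflated single-site data. For each $z\in V^K$ I would take the enlarged single-site space $\widetilde\Omega^*_z:=\Omega^*_{E_z\cap\L}$ with measure $\widetilde\mu^*_z:=\mu^*_{E_z\cap\L}$, and facilitating event $\widetilde G^*_z\subset\widetilde\Omega^*_z$ defined as the pull-back of $G^*_{\Phi(z)}$ under the coordinate projection implicit in Definition~\ref{def:*K}, namely the projection that reads from a block configuration on $E_z\cap\L$ whether the Knight chain has a legal update at a successor of $z$. This pull-back has $\widetilde\mu^*_z$-probability $q^*$. A direct comparison of generators then shows that the $^*$Knight chain is literally the $^*$East chain on the Knight graph with data $(\widetilde\Omega^*_z,\widetilde\mu^*_z,\widetilde G^*_z)_{z\in V^K}$, and transporting via $\Phi$ produces a $^*$East chain on $V\subset\bbZ^d_+$ with the \emph{same} $q^*$ but a larger single-site space.

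To finish, I would apply the extension of \cite[Proposition~3.4]{CFM2} quoted after Definition~\ref{def:*East chain}: the spectral gap of any $^*$East chain on $V$ depends only on $V$ and on $q^*$, not on the choice of single-site space, measure, or facilitating event. This immediately yields $\gamma^{*K}(EV^K\cap\L)=\gamma(V;q^*)$. The main technical hurdle I anticipate is the middle step: making the coordinate projection implicit in Definition~\ref{def:*K} entirely explicit and verifying, at the level of generators, that the block-resampling rule of the $^*$Knight chain is indeed the single-site $^*$East update applied to the enlarged space $(\widetilde\Omega^*_z,\widetilde\mu^*_z,\widetilde G^*_z)$. Once this identification is pinned down, the spectral gap equality is a formal consequence of the $^*$East chain theory.
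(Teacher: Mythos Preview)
Your geometric observation in the first step is false, and this breaks the entire argument. You claim that any two distinct Knight vertices lie at $\ell_1$-distance at least $d+1$, but this holds only for \emph{Knight-adjacent} vertices, not for all pairs in $W$. Already in $d=2$ the Knight vertices $(1,2)$ and $(2,1)$ (both Knight-neighbours of the origin, hence both in any Knight box of side $\ge 1$) are at $\ell_1$-distance $2<d+1=3$. Consequently their enlargements overlap: for instance $(2,2)\in E_{(1,2)}\cap E_{(2,1)}$, since $(2,2)\succ (1,2)$, $(2,2)\succ (2,1)$, $\|(2,2)-(1,2)\|_1=\|(2,2)-(2,1)\|_1=1\le d$, and $(2,2)\in W^c$. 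The paper in fact flags precisely this issue, noting that disjointness is ``a feature not necessarily shared by the sets $\{E_x\setminus\{x\}\}_{x\in V^K}$''. Once the decomposition $EV^K\cap\L=\bigsqcup_{z\in V^K}(E_z\cap\L)$ fails, the $^*$Knight chain is \emph{not} a $^*$East chain on inflated blocks: an update at $z$ resamples vertices that also belong to $E_{z'}\cap\L$ for some $z'\neq z$, so it modifies what you would call the block variable at $z'$; this is incompatible with the single-site update rule of a $^*$East chain.

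The paper circumvents the overlap by an extra comparison step. It first chooses a genuine \emph{partition} $\{Q_x\}_{x\in V^K}$ of the enlarged region with $Q_x\subset E_x$, and considers the auxiliary chain that at a legal Knight update at $x$ resamples only $x\cup Q_x$. Because the $Q_x$ are disjoint by construction, this auxiliary chain \emph{is} a $^*$East chain on inflated (and now product) single-site spaces, so its gap equals $\gamma(V;q^*)$ by the result you cite. One then uses $x\cup Q_x\subset E_x\cap\L$ together with $\mu^*_{E_x}\bigl(\var^*_{x\cup Q_x}(f)\bigr)\le \var^*_{E_x}(f)$ to bound the auxiliary Dirichlet form by the $^*$Knight Dirichlet form, yielding $\gamma^{*K}(EV^K\cap\L)\ge \gamma(V;q^*)$; the reverse inequality follows by projecting onto the indicators $\eta_x=1-\id_{\{\omega_x\in G^*_x\}}$. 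Your plan is essentially the paper's proof with the partition step skipped, and that step cannot be skipped.
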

We can finally state the
main result of this section.
\begin{proposition}\label{lemma:iterate_bound}
Fix $\lambda\in (1/d,1]$ and let $F(\cdot)$ be the mapping in
\eqref{eq:mapping}. Then $\l\sim \cH'(0)$ implies that $F(\l)\sim\cH'(0)$.
\end{proposition}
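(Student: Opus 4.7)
The plan is to execute one step of the renormalization procedure of \cite{CFM2}, bootstrapping $\l\sim\cH'(0)$ to $F(\l)\sim\cH'(0)$. Fix $\e>0$, an equilateral box $\L\subset\bbZ^d_+$, and a parameter $\a\in(0,1/d)$ to be optimized. Partition $\bbZ^d_+$ into disjoint blocks of side $L_B=\lfloor 2^{\a\theta_q}\rfloor$, and for each block $B_x$ endow $\O^*_x=\{0,1\}^{B_x}$ with the product measure $\mu^*_x$, declaring the block facilitating (event $G^*_x$) iff it contains at least one vacancy. The block-vacancy density is $q^*=1-(1-q)^{L_B^d}$, so that $\theta_{q^*}=(1-d\a)\theta_q(1+o(1))$ as $q\to 0$. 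The boundary case $L\lesssim L_B$ is handled trivially by \cref{lem:East}, so henceforth assume the side of $\L$ is much larger than $L_B$.

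Next I transfer to the Knight graph. Let $\L^K\subset\L$ be the largest Knight equilateral box containing the origin; by construction $\|x_\L-x_{\L^K}\|_1$ is bounded, so $x_\L$ lies in the enlargement of $x_{\L^K}$. Since $\Phi(\L^K)$ is a standard equilateral box in $\bbZ^d_+$, the hypothesis $\l\sim\cH'(0)$ applied at vacancy density $q^*$ (in place of $q$) yields a subset $\tilde V\subset\Phi(\L^K)$ containing both the origin and $x_{\Phi(\L^K)}$ with $\gamma(\tilde V;q^*)\ge 2^{-\l(1+\e)\theta_{q^*}^2/2}$. Setting $V^K:=\Phi^{-1}(\tilde V)\subset\L^K$ and $V:=EV^K\cap\L$, so that $V\supset\{0,x_\L\}$ up to a negligible shift, \cref{lemma:K} guarantees that the ${}^*$Knight chain on $\O^*_V$ has spectral gap $\gamma^{*K}(V)=\gamma(\tilde V;q^*)$.

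The remaining step is a two-scale Poincar\'e decomposition to pass from the ${}^*$Knight chain back to the standard East chain on $V$. The ${}^*$Knight Dirichlet form reads $\cD^{*K}(f)=\sum_{z\in V^K}\mu(c_z^*\,\var_{E_z\cap\L}(f))$, so that $\var_V(f)\le \gamma^{*K}(V)^{-1}\cD^{*K}(f)$. It then suffices to dominate each within-block variance $\var_{E_z\cap\L}(f)$ on the event $\{c_z^*=1\}$ by $\cD_{E_z\cap\L}(f)$. The event $c_z^*=1$ guarantees a vacancy in a Knight-neighbor block, and the simplicial geometry of the enlargements ensures that such a vacancy can be positioned adjacent to the corner of $E_z$, providing precisely the unconstrained seed needed by the East chain inside $E_z$. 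The resulting within-block spectral gap is controlled by the $d=1$ gap on intervals of length $L_B=2^{\a\theta_q}$; by \cref{lem:East} and the CMRT asymptotic this gap is bounded below by $2^{-\theta_q^2(\a-\a^2/2)(1+o(1))}$. Composing the two Poincar\'e inequalities,
\[
\gamma(V)^{-1}\le 2^{\theta_q^2[\,\l(1-d\a)^2+2\a-\a^2\,](1+\e)/2}.
\]

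To close the recursion I minimize $\a\mapsto\l(1-d\a)^2+2\a-\a^2$, which yields $\a^*=(d\l-1)/(d^2\l-1)\in(0,1/d)$ for $\l\in(1/d,1]$. Direct substitution gives
\[
\l(1-d\a^*)^2+2\a^*-(\a^*)^2=\frac{(2d-1)\l-1}{d^2\l-1}=F(\l),
\]
proving $F(\l)\sim\cH'(0)$. The main technical obstacle is the within-block step: making rigorous the passage from the generic facilitation event $\{c_z^*=1\}$ to an explicit unconstrained corner of $E_z\cap\L$, and then to the one-dimensional bound along the simplex, requires the precise Knight-graph geometry together with a careful choice of auxiliary boundary conditions, so that the facilitation propagates cleanly across scales and no spurious factor polluting the exponent $(\a-\a^2/2)\theta_q^2/2$ appears.
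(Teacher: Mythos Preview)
Your approach is the paper's: block-renormalize at scale $\ell=\lfloor 2^{\a\theta_q}\rfloor$, pass to the Knight graph on the box $\L_B$ of block indices, invoke $\l\sim\cH'(0)$ at the renormalized density $q^*$ to produce $V^K$, and run a two-scale Poincar\'e comparison. The paper's precise tool for your ``within-block step'' is \cref{lem:trick}, which converts $\mu(c^*_{\mathbf j}\var_{B_{U_{\mathbf j}}}(f))$ into a piece of the East Dirichlet form at the cost $\g(B_0)^{-1}$, with $\g(B_0)\ge 2^{-(2\a-\a^2)\theta_q^2(1+\e)/2}$ by \cref{lem:East}; this replaces your informal ``positioning the vacancy adjacent to the corner of $E_z$''. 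The paper fixes $\a=m:=(d\l-1)/(d^2\l-1)$ at the outset rather than optimizing at the end, but this is exactly your $\a^*$.

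There is however one concrete omission. The condition $\cH'(0)$ requires $V\ni x_\L$ exactly, not ``up to a negligible shift''. Note first that the Knight graph and its enlargements live on $\L_B$, not on $\L$; your ``$\|x_\L-x_{\L^K}\|_1$ bounded'' is a statement about block indices and says nothing about $x_\L$ itself lying in any $E_z$. The set you build (which in the paper's notation is $V_1=\cup_{\mathbf j\in EV^K\cap\L_B}B_{\mathbf j}$) will in general miss $x_\L$ by $O(\ell)$ lattice sites, both because the blocks need not tile $\L$ exactly and because the Knight corner $\mathbf j_{\L_B^K}$ is only within $O(1)$ block-steps of $\mathbf j_{\L_B}$. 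The paper closes this by adjoining a monotone lattice path $\G$ of length $O(\ell)$ from a vertex of $V_1$ to $x_\L$, then applying \cref{lem:2 blocks} to bound $\g(V_1\cup\G)\ge \frac{q}{4}\min(\g(V_1),\g^\s(\G))$; the path contribution $\g^\s(\G)$ is again $\ge 2^{-(2\a-\a^2)\theta_q^2(1+\e)/2}$ by \cref{lem:East} and is absorbed into the same exponent. Without this patch the set you exhibit does not witness $F(\l)\sim\cH'(0)$.
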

\begin{proof}
Let $\lambda\in (1/d,1]$ with  $\l\sim \cH'(0)$ and let $\Lambda\subset \bbZ^d_+$ be an equilateral box with side
        length $L$. Using a suitable $\l$-dependent *Knight chain, we will now construct a set $V\subset \L$ such that
        $\g(V)\ge 2^{-F(\lambda)\frac{\theta_{q_*}^2}{2}(1+\e)}$.

        Let $\ell=\lfloor 2^{m\theta_q}\rfloor,$ where
        $m=(d\lambda-1)/(d^2\lambda-1)$ and observe that $\ell\le
        2^{\theta_q/d}.$ If $L\le \ell$
        we can use Lemma \ref{lem:East} to get that
        \[
          \g(\L)\ge 2^{-(m-m^2/2)\theta_q^2(1+o(1))}\ge 2^{-F(\l)\frac{\theta_q^2}{2}(1+o(1))}.
        \]
In this case we simply choose $V=\L$. If instead $L > \ell$ we
proceed as follows. 
\begin{figure}[ht!]
        \begin{center}
        \begin{tikzpicture}[scale=0.25]
                \draw[lightgray,opacity=0.5] (0,0) grid (30,30);
                \draw[fill=lightgray, opacity=0.1]  (0,0) -- ++(27,0) -- ++(0,27) -- ++(-27,0) --
                        cycle;
                \foreach \i in {0,...,2}{
                        \foreach \j in {0,...,2}{ 
                                \draw[opacity=0.4,fill=red,red] (8*\i+4*\j,4*\i+8*\j) -- ++(3,0) -- ++(0,3)
                                        -- ++(-3,0) -- cycle;
                        }
                }
                \foreach \i in {0,...,2}{
                        \foreach \j in {0,1}{ 
                                \draw[thick,fill=green, opacity=0.4]
                                (8*\i+4*\j,4*\i+8*\j) -- ++(3,0) --
                                ++(0,3) -- ++(-3,0) -- cycle;
                                \foreach \offset in {1}{
                                        \draw[pattern=north west lines]
                                                (8*\i+4*\j+4*\offset,4*\i+8*\j) -- ++(3,0) -- ++(0,3)
                                                -- ++(-3,0) -- cycle;
                                        \draw[pattern=north west lines]
                                                (8*\i+4*\j,4*\i+8*\j+4*\offset) -- ++(3,0) -- ++(0,3)
                                                -- ++(-3,0) -- cycle;
                                }
                                \draw[pattern=north west lines] (8*\i+4*\j+4,4*\i+8*\j+4) -- ++(3,0) -- ++(0,3) -- ++(-3,0) -- cycle;
                        }
                }

                \foreach \i in {0,...,2}{
                        \foreach \j in {0}{ 
                                \foreach \offset in {2}{
                                        \draw[pattern=north west lines]
                                                (8*\i+4*\j+4*\offset,4*\i+8*\j) -- ++(3,0) -- ++(0,3)
                                                -- ++(-3,0) -- cycle;
                                        \draw[pattern=north west lines]
                                                (8*\i+4*\j,4*\i+8*\j+4*\offset) -- ++(3,0) -- ++(0,3)
                                                -- ++(-3,0) -- cycle;
                                }

                        }
                }
                \foreach \i in {0,...,1}{
                        \foreach \j in {1}{ 
                                \foreach \offset in {2}{
                   \draw[pattern=north west lines]  (8*\i+4*\j+4*\offset,4*\i+8*\j) -- ++(3,0) -- ++(0,3)
                                                -- ++(-3,0) -- cycle;
                                        \draw[pattern=north west lines]
                                                (8*\i+4*\j,4*\i+8*\j+4*\offset) -- ++(3,0) -- ++(0,3)
                                                -- ++(-3,0) -- cycle;
                                }

                        }
                }
                \draw[pattern=north west lines]
                        (8+4+8,4+8+12) -- ++(3,0) -- ++(0,3) -- ++(-3,0) -- cycle;

                \draw[thick,fill=green, opacity=0.4] (8*2+4*2,4*2+8*2) -- ++(3,0) -- ++(0,3)
                        -- ++(-3,0) -- cycle;

                \foreach \i in {0,...,6}{
                        \foreach \j in {0,...,6}{
                                \draw (4*\i,4*\j) -- ++(3,0) -- ++(0,3)
                                        -- ++(-3,0) -- cycle;
                        }
                }
                                       \draw[cross] (30,30) node
                                       {$\bullet$};
                                       \draw[cross] (27-3,27-3) node {$\bullet$};
                                       \draw[cross] (24,25) node {$\bullet$};
                \draw (30,27) node[anchor=west] {$\G$};
                \draw (31,15) node[anchor=west] {$\Lambda$};
                \draw (21.5,1.5) node {$B_{\mathbf{j}}$};
                \draw (30,30) node[anchor=south] {$x_{\Lambda}$};
                \draw (37.5,24 ) node {$(\lfloor L/\ell\rfloor,\dots, \lfloor L/\ell\rfloor)$};
                 \draw[->, bend left=10] (24,24) to  
                 (31,24);
                 \draw [thick] (24,25)--(30,25)--(30,30);
               \end{tikzpicture}
        \end{center}
        \caption{The setting in the proof of Proposition
          \ref{lemma:iterate_bound} with $\ell=3$ and $L=30$. The $3\times 3$
        boxes $B_{\mathbf j}$ are those with $\mathbf j\in \L_B,$ the
        coloured (red/green) ones are those with $\mathbf j\in
        \L_B^K, $ the green ones are those with $\mathbf j\in
        V^K, $ and the dashed ones are those with $\mathbf j\in
        (EV^K\cap \L_B)\setminus V^K$. The set $V$ with $\g(V)\ge
        2^{-F(\l)\frac{\theta_q^2}{2}(1+o(1))}$ is the union of
        the green and dashed boxes together with the path $\G.$ }
      \label{fig:K2}
    \end{figure}

Let $B_0$ be the equilateral box with side length
        $\ell,$ let $\Lambda_B:= \{0,\dots, \lfloor L/\ell\rfloor\}^d$ and for
        $\mathbf{j}\in \bbZ_+^d$ let $B_{\mathbf{j}}= B_0 + \mathbf{j}\ell$.
       Thus $\cup_{\mathbf{j}\in \Lambda_B}B_{\mathbf{j}}\subset
        \Lambda$ and $\min_{x\in
        B_{\mathbf{j}_{\Lambda_B}}}\|x-x_{\Lambda}\|_1\le O(\ell)$. We
      say that
        $B_{\mathbf{j}}$ is \emph{good} if it contains at least one
        vacancy and observe that the density $q^*=1-{(1-q)}^{\ell^d}$ of good
        boxes satisfies (we use the Bonferroni inequality for the
        lower bound)
        \[
          q\ell^d/2 \le q^*\le q\ell^d\le 1 \quad \Rightarrow \quad
          \theta_{q^*}\in [ \theta_{q}(1-dm), \theta_{q}(1-dm)+1].
          \]
In the sequel we will use the Knight chain and the *Knight
chain with $\O^*_{\mathbf j}=\{0,1\}^{B_{\mathbf{j}}},\ \mu^*_{\mathbf j}=\otimes_{x\in B_{\mathbf j}}
\mu_x$, and facilitating events $G^*_{\mathbf j}=\{\text{$B_{\mathbf j}$
  is good}\}$. 

Let $\L_B^K\subset \L_B$ be the largest Knight equilateral box containing the
origin and for $V^K\subset \L_B^K$ consider the *Knight
chain on $\O_{EV^K\cap \L_B}$.
Using $\l\sim \cH'(0)$ we can choose $V^K\subset \L^K_B$ such that $V^K\supset
\{0,{\mathbf j}_{\L^K_B}\}$ and $\forall \e>0$ and $q$ small enough depending on
$\e$ 
\begin{equation}
  \label{eq:26}
  \gamma^{*K}(EV^K\cap \L_B)=\g(\Phi(V^K);q^*)
                \ge
                2^{-\l\frac{\theta_{q_*}^2}{2}(1+\epsilon/2)},
\end{equation}
where in the equality we used Lemma \ref{lemma:K}.
We then take $V=V_1\cup \G \subset \L,$ where $V_1= \cup_{\mathbf j\in EV^K\cap
  \L_B}B_{\mathbf j}$ and 
$\G=(x^{(0)},x^{(1)},\dots, x^{(N)})$ is any path in
$\L$ satisfying: (i) $x^{(0)}\in V_1,\, x^{(N)}=x_\L,$ (ii) $x^{(i-1)}\prec x^{(i)} \
\forall i\in [N],$ and (iii) $N=O(\ell)$.
By construction such a path always exists. 
\begin{claim}
  For any $\e>0$ there exists $q(\e)>0$ such that for all $q\le q(\e)$
  \[
\g(V)\ge 2^{-\frac 12 (\l \theta_{q_*}^2+(2m-m^2)\theta_q^2)(1+\epsilon)}=2^{-F(\l) \frac{\theta_q^2}{2}(1+\epsilon)}.
  \]
  
\end{claim}
Clearly the claim proves the proposition. 
\end{proof}
\begin{proof}[Proof of the claim]
  Fix $\e>0$ and choose $q$ small enough depending on $\e$.
Let $V_2=\G\setminus x^{(0)}$ and use Lemma \ref{lem:2 blocks} to get that
 $\g(V)\ge 2^{-(\theta_q+2)}\min\big(\g(V_1),\g^\s(V_2)\big)$ where $\s\in
 \O_{\partial_{\downarrow}V_2}$ consists of a unique vacancy at
$x^{(0)}.$ 
Lemma \ref{lem:trick} together with \eqref{eq:26}
and the fact that $\g(B_0)\ge 2^{-(2m-m^2)\frac{\theta_q^2}{2}(1+\e)}$ give
that $ \g(V_1)\ge 2^{-\frac 12 (\l
  \theta_{q_*}^2+(2m-m^2)\theta_q^2)(1+\epsilon)}$.
Moreover, using Lemma \ref{lem:East} we have that $\g^{\s}(V_2)\ge
2^{-(2m-m^2)\frac{\theta_{q}^2}{2}(1+\e)}$. The claim then follows
from the observation that
\[
  (\l  \theta_{q_*}^2+(2m-m^2)\theta_q^2)=F(\l)\theta_q^2(1+o(1))\quad
  \text{as $q\to 0$.}
\]
\end{proof}
The recursive step $\l\sim \cH(0)\Rightarrow F(\l)\sim
\cH(0)$ now follows immediately from Lemma \ref{lemma:iterate_bound} and Lemma \ref{lemma:hprime_h0}.
\subsection{Proof of (ii)}
The proof consists of two different steps. We first prove
that  $\phi(\b;2)<1$ for all $\b<1$ implies that the same
holds for any $d\ge 3$ and then we deal with the
two dimensional case.
\subsubsection{The induction step}
Fix $d\ge 3$ and $\b<1$ and assume $\phi(\b;d')<1$ for any $2\le d'\le
d-1$. We are going to prove that $\phi(\b;d)<1$ as well. Fix  $\k\ge
1$ together
with a $(\b,\k)$-outstretched box
$\L$ with side lengths $(L_1,\dots,L_d)$ and 
set (see \cref{fig:2})
   \begin{align*}
\L_1&=\{x\in \L:\ x_1\le \lfloor
      L_1/2\rfloor, \ x_d=0\},\\
  \L_2 &=\{x\in \L:\ x_1> \lfloor
L_1/2\rfloor, x_i=L_i,\  2\le i\le d-1\}.
  \end{align*}
\begin{figure}[!ht]
  \begin{center}
\begin{tikzpicture}{scale=0.7}
        \newcommand{\Side}{2}
        \coordinate (O) at (0,0,0);
        \coordinate (A) at (0,\Side,0);
        \coordinate (B) at (0,\Side,\Side);
        \coordinate (C) at (0,0,\Side);
        \coordinate (D) at (\Side,0,0);
        \coordinate (E) at (\Side,\Side,0);
        \coordinate (F) at (\Side,\Side,\Side);
        \coordinate (G) at (\Side,0,\Side);

        \draw[black,fill=gray!50] (O) -- (\Side/2-0.1,0,0) -- (\Side/2-0.1,0,\Side) -- (C) -- cycle;
        \draw[black,fill=gray!50] (\Side/2,0,0) -- ++(0,\Side,0) -- ++(\Side/2,0,0)
                -- (D) -- cycle;

        \draw[black] (O) -- (C) -- (G) -- (D) -- cycle;
        \draw[black] (O) -- (A) -- (E) -- (D) -- cycle;
        \draw[black] (O) -- (A) -- (B) -- (C) -- cycle;
        \draw[black] (D) -- (E) -- (F) -- (G) -- cycle;
        \draw[black] (C) -- (B) -- (F) -- (G) -- cycle;
        \draw[black] (A) -- (B) -- (F) -- (E) -- cycle;

        \node[anchor=north] at (C) {$0$};
        \filldraw (\Side/2,0,0) circle (1pt);
        \filldraw (\Side/2-0.1,0,0) circle (1pt);
        \node[anchor=south] at (E) {$x_{\Lambda}$};
        \draw[thick, cross] (-0.1,-0.5) node[anchor=south west] {$\L_1$};
         \draw[thick, cross] (1.3,1) node[anchor=south west] {$\L_2$};
\end{tikzpicture}
\end{center}
\caption{The boxes $\L_1,\L_2$. The two black dots denote
  $x_{\L_1}$ and the origin of $\L_2$ at $x_{\L_1}+\mathbf e_1$ respectively.}
\label{fig:2}
\end{figure}
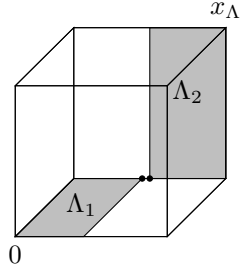
 By construction, the origin of the box
$\L_2$ is at $x_{\L_1}+
\mathbf e_1$ and $x_{\L_2}=x_\L.$ Moreover,
both $\L_1$ and $\L_2$ are $(\b,\k)$-outstretched boxes in $\bbZ_+^{d-1}$
and $\bbZ_+^2$ respectively.
The induction hypothesis implies that for all $\e>0$ and all $q$ small enough depending on
$\e,\b,\k$ there exist $V_i\subset \L_i, i=1,2,$ such that
\begin{itemize}
\item $V_1\supset \{0,x_{\L_1}\}$ and $V_2\supset \{x_{\L_1}+\mathbf
  e_1, x_\L\};$
  \item $\g(V_1)\ge 2^{-(1+\e)\phi(\b;d-1)\frac{\theta_q^2}{2}}$ and
    $\g^\s(V_2)\ge 2^{-(1+\e)\phi(\b;2)\frac{\theta_q^2}{2}},$ where
    $\s\in \O_{\partial_{\downarrow}V_2}$ has a unique vacancy at $x_{\L_1}$.
  \end{itemize}
Lemma \ref{lem:2 blocks} then implies that
$ \g(V) \ge 2^{-(1+2\e)(\phi(\b;d-1)\vee
        \phi(\b;2))\frac{\theta_q^2}{2}},$
 i.e.\ $\phi(\b;d)\le \phi(\b;d-1)\vee
        \phi(\b;2))<1$.    
\subsubsection{The base case \texorpdfstring{$d=2$}{d=2}}
  We will
  prove that $\forall \b\in (0,1)$ 
  \begin{equation}
    \label{eq:20}
    \phi(\b;2)\le \frac 12 (1-\b)^2 +2\b -\b^2,
  \end{equation}
  which, in particular, implies that $\phi(\b;2)<1\ \forall \b<1$. 
The main idea here is to partition a $(\b,\k)$-outstretched box $\L$ into
suitably chosen mesoscopic boxes in such a way that the coarse-grained
version of $\L$ becomes a $(0,2)$-outstretched box on which the control of
the Dirichlet eigenvalue gap is assured by part (i) of the
proposition.

Fix $0<\b<1, \k\ge 1$ together with
a $(\b,\k)$-outstretched box $\L$ with side lengths $(L_1,L_2),$ and
assume w.l.o.g. that $L_1=\min_i L_i$.
We set $\ell=\lceil 
(L_2+1)/2(L_1+1)\rceil\le (\k/2)2^{\b \theta_q},$ and
w.l.o.g.\  we assume that $(L_2+1)/\ell \in \bbN$.
We then partition
$\L$ into vertical one dimensional boxes $B_{\mathbf j}=B+ x_{\mathbf
  j}, \ B=\{0\}\times
\{0,\dots,\ell-1\}, x_{\mathbf j}=(j_1,j_2\ell)$ where $\mathbf j\in Q=\{0,\dots, L_1-1\}\times \{0,\dots, (L_2+1)/\ell -1\}$. 
 We also write $\O^*_{\mathbf
  j},\mu^*_{\mathbf j}$ for $\O_{B_{\mathbf
    j}}$ and $\mu_{B_{\mathbf j}}$ respectively.

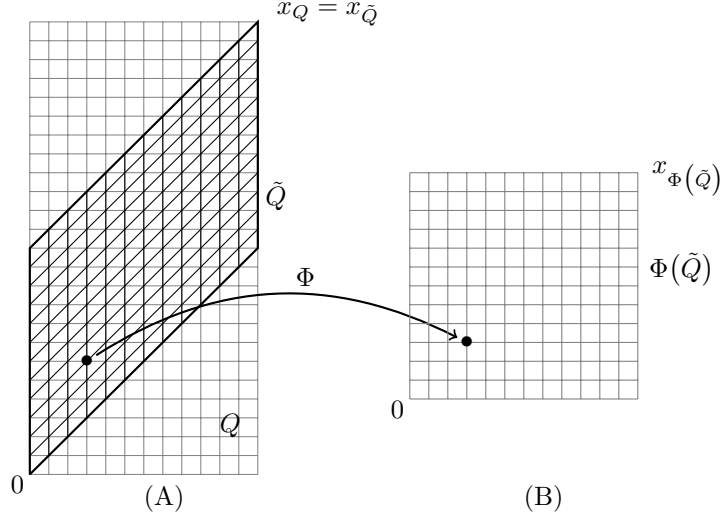
\begin{figure}[!ht]
  \begin{center}
\begin{tikzpicture}[scale=0.5]
        \draw[step=0.5cm,gray,very thin] (0,0) grid (6,12);
        \draw[thick] (0,0) --(6,6) --(6,12) --(0,6)
        --(0,0);
        \foreach \x in {0,...,12}{
             \draw[black, thin] (0,0.5*\x)--(6,6+0.5*\x);
             \draw[black,thin] (0.5*\x,0.5*\x)--(0.5*\x,0.5*\x+6);
        }
        \draw[thick, cross] (-0.5,-0.5) node[anchor=south west] {$0$};
         \draw[thick, cross] (6.5,12.5) node[anchor=north west]
         {$x_Q=x_{\tilde Q}$};
         \draw[thick, cross] (1.5,3) node {$\bullet$};
        \draw[thick, cross] (5,1) node[anchor=south west] {$Q$};
        \draw[thick, cross] (6.2,7) node[anchor=south west] {$\tilde Q$};
        \draw[thick, cross] (3,-1) node[anchor=south west] {(A)};
        \draw[thick, cross] (13,-1) node[anchor=south west] {(B)};
        \draw[thick,cross] (7,5) node[anchor=south west] {$\Phi$};
\node (a) at (1.5, 3) {};
\node (b) at (11.5, 3.5) {};
\draw[->,thick] (a)  to [bend left=30] (b);
  \begin{scope}[shift={(10,2)}];
    \draw[step=0.5cm,gray] (0,0) grid (6,6);
        \draw[thick, cross] (-0.5,-0.5) node[anchor=south west] {$0$};
         \draw[thick, cross] (8.3,6.3) node[anchor=north east]
         {$x_{\Phi\big(\tilde Q\big)}$};
        \draw[thick, cross] (6.3,3) node[anchor=south west] {$\Phi\big(\tilde Q\big)$};
         \draw[thick, cross] (1.5,1.5) node {$\bullet$};
      \end{scope}
          \end{tikzpicture}

    \end{center}
\caption{\label{fig:3} (A). The box $Q$ and the region $\tilde Q$ with its graph structure. Each
  vertex $\mathbf j\in Q$ represents the box
  $B_{\mathbf j}.$  (B). Under the
  natural isomorphism $\Phi$ the graph $\tilde Q$ becomes the standard
  square graph of $\bbZ^2$.    }

\end{figure}
Let $\tilde Q$ be the subset of $Q$ lying between the two
$45^{\circ}$-lines, one through the origin and the other through the
point $x_Q$ and declare that $\mathbf j, \mathbf j'\in \tilde Q$ form an edge 
if either $j_2=j_2'+1$ and $j_1\in \{j_1',j_1'+1\}$ or vice versa (see
Figure \ref{fig:3}). The
corresponding graph over the vertex set $\tilde Q$ is isomorphic via
the natural graph isomorphism $\Phi$ to
the box $\Phi(\tilde Q)\subset \bbZ_+^2$ with origin
at $x=0$ and side lengths $L_1-1,(L_2+1)/\ell -L_1$. In particular,
we write $\mathbf j'\prec \mathbf j$ iff
$\Phi(\mathbf j')\prec \Phi(\mathbf j).$ 

On any subset $V$ of $\tilde Q$ we
consider the image of the *East chain on $\Phi(V)$ (or rather a slightly
altered version of it as we see below) with parameters
$\O^*_{\mathbf j}, \mu^*_{\mathbf j}$ and facilitating event
$G_{\mathbf j}=\{\o_{B_{\mathbf j}}\neq 1\}.$
Thus $q^*=1-(1-q)^{\ell}$ and $\theta_{q^*}= (1-\b)\theta_q +\Theta(1)$.
As the box $\Phi(\tilde Q)$ is $(0,2)$-outstretched, part (i) of
Proposition \ref{prop:1} implies the existence of $W\subset
\Phi(\tilde Q),$ containing the origin and 
$x_{\Phi(\tilde Q)}$ such that, for any $\e>0$ and any
$q$ sufficiently small depending on $\e,$  
\begin{equation}
  \label{eq:21}
\g(W; q^*)\ge 2^{-(1+\e/2)\frac{\theta_{q^*}^2}{4}}.
\end{equation}
Recall the definition of enlargements $E_x$ from above Definition \ref{def:*K}.
We define $E\Phi^{-1}(W):= \cup_{\mathbf{j}\in \Phi^{-1}(W)}E_{\mathbf{j}}\cap
Q$ and $V=\cup_{\mathbf j\in E\Phi^{-1}(W)}B_{\mathbf j}\subset
\L$ and observe that $V$ contains the origin and the vertex $x_\L$.

\begin{claim}
\label{claim:1}For any $\e>0$ and any
$q$ sufficiently small depending on $\e$
\[
  \g(V)\ge
  \g(W; q^*)\times 2^{-(\b-\b^2/2) \theta_q^2(1+\e)}.
  \]
\end{claim}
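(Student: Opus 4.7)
The proof will follow the same two-scale renormalisation strategy already used at the end of the proof of part (i). By construction $V$ admits the disjoint decomposition $V=\bigsqcup_{\mathbf{j}\in E\Phi^{-1}(W)} B_{\mathbf{j}}$ into one-dimensional segments of length $\ell\le (\kappa/2)2^{\b\theta_q}$ oriented along the second coordinate. After the product identification $\Omega_V\simeq \bigotimes_{\mathbf{j}}\Omega^*_{\mathbf{j}}$, the East chain on $V$ has a natural two-scale structure: microscopic updates inside a single block $B_{\mathbf{j}}$ are those of the one-dimensional East chain, while updates at the bottom vertex of $B_{\mathbf{j}}$ require a vacancy in a suitable neighbouring block, matching exactly the facilitating event $G_{\mathbf{j}}=\{\omega\restriction_{B_{\mathbf{j}}}\neq 1\}$ of the *Knight chain with density parameter $q^*=1-(1-q)^{\ell}$.

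For the mesoscopic factor, Lemma \ref{lemma:K} identifies the spectral gap of the *Knight chain on $\Omega^*_{E\Phi^{-1}(W)}$ with $\g(W;q^*)$, which was already estimated in \eqref{eq:21}; this is where the *East/Knight coarse-graining machinery developed in Section \ref{sec:RG eigenvalue} is used. For the microscopic factor, each block $B_{\mathbf{j}}$ is a one-dimensional box of length $\ell\le (\kappa/2)2^{\b\theta_q}$, so Lemma \ref{lem:East} gives
\[
\g(B_{\mathbf{j}})\;\ge\; 2^{-(2\b-\b^2)\frac{\theta_q^2}{2}(1+\e)} \;=\; 2^{-(\b-\b^2/2)\theta_q^2(1+\e)},
\]
uniformly in $\mathbf{j}$, for all $q$ small enough depending on $\e,\b$.

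I would then invoke Lemma \ref{lem:trick} verbatim as was done in part (i) to combine these two estimates into the product bound $\g(V)\ge \g(W;q^*)\cdot\g(B_0)$, which is exactly the claim. The only verification required is that the present geometric setting meets the hypotheses of Lemma \ref{lem:trick}: the decomposition into disjoint equal-size blocks, the product identification of $\Omega_V$, and the matching between the East constraints at inter-block vertices and the *East facilitating events $G_{\mathbf j}$.

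The main subtlety (which is the one point that deserves a careful check rather than being routine) is this geometric matching: because we work with the Knight/anisotropic coarse-graining, the East constraint at the bottom vertex of $B_{\mathbf{j}}$ has to be shown to be activated by a vacancy in precisely one of the blocks indexed by a Knight-predecessor of $\mathbf{j}$. This is built into the definition of $\tilde Q$ and of the natural isomorphism $\Phi$, and it is exactly why $\ell$ was chosen of order $(L_2+1)/2(L_1+1)$, so that $\Phi(\tilde Q)$ is $(0,2)$-outstretched and part (i) of \cref{prop:1} supplies the bound \eqref{eq:21}. Once this is in place the combination of the two scales is automatic and the claim follows.
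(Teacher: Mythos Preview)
Your approach is correct and is essentially the paper's. Two minor imprecisions worth flagging: (i) Lemma~\ref{lemma:K} as stated is for the Knight graph, whereas here the coarse-grained graph is $\tilde Q$ with its $45^\circ$-tilted edge structure; the paper therefore builds an ad hoc auxiliary block chain on $V$ (partitioning $E\Phi^{-1}(W)$ into disjoint sets $U_{\mathbf j}$ with $\mathbf j\in U_{\mathbf j}\subset E_{\mathbf j}$, since the enlargements themselves may overlap) and notes its gap equals $\g(W;q^*)$ by the same argument as Lemma~\ref{lemma:K}. (ii) When combining via Lemma~\ref{lem:trick} and Lemma~\ref{lem:East}, the sum over $z\in B_{E_{\mathbf j'}}$ in \eqref{eq:23} produces an overcounting factor $|E_{\mathbf j}|=O(\ell)$ which must be absorbed into the $(1+\e)$; this is routine but not literally ``$\g(V)\ge\g(W;q^*)\cdot\g(B_0)$''. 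Otherwise your two-scale outline (mesoscopic Poincar\'e inequality \eqref{eq:22}, then microscopic one-dimensional estimate \eqref{eq:23}) matches the paper exactly.
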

\begin{proof}[Proof of the claim]
On $V$ we define an auxiliary dynamics to the *East chain. Consider for that
a partition of $E\Phi^{-1}(W)$ into disjoint connected subsets $U_{\mathbf{j}}$
for $\mathbf{j}\in \Phi^{-1}(W)$ such that $\mathbf{j}\in U_{\mathbf{j}}\subset
E_{\mathbf{j}}$ and $\cup_{\mathbf{j}\in \Phi^{-1}(W)}U_{\mathbf{j}}=
E\Phi^{-1}(W)$. In the sequel we write
$B_{U_{\mathbf{j}}}:=\cup_{\mathbf{j}'\in U_{\mathbf{j}}} B_{\mathbf{j}'}$ and
analogously for $B_{E_{\mathbf{j}}}$. Let
$c_{\mathbf{j}}^{*}(\omega)=1$ iff either $\mathbf{j}=0$ or there exists a
neighbor $\mathbf{j}'\prec \mathbf{j}$ such that there exists at least a
vacancy in $B_{\mathbf{j}'}.$ For such constraints we define the auxiliary dynamics that updates
$B_{U_{\mathbf{j}}}$ with a configuration sampled from
$\mu_{B_{U_{\mathbf{j}}}}$ if $c^*_{\mathbf{j}}(\omega)=1$ and otherwise do
nothing. The spectral gap of this chain is, as the one for the enlarged East
chain, given by $\gamma(W,q^*)$, since the $\mathbf{j}$ that participate in the
dynamics are only the ones in $\Phi^{-1}(W)$ (see the appendix for the proof in
the case of enlarged-*Knight chains). The Poincar\'e inequality reads
\begin{equation}
  \label{eq:22}
  \var_{V}(f)\le \g(W; q^*)^{-1}\sum_{\mathbf j\in
    \Phi^{-1}(W)}\mu_V\big(c^*_{\mathbf j}\var_{B_{U_{\mathbf{j}}}}(f)\big), \quad \forall\, f,
\end{equation}
We now bound a generic term $\mu_V\big(c^*_{\mathbf j}(\o)
\var_{B_{U_{\mathbf j}}}(f) \big)$. Using Lemma \ref{lem:trick}, Lemma \ref{lem:East}, and
$\ell \le O(\kappa)2^{\b \theta_q},$ for any $\e>0$ and any $q$ small enough
depending on $\e$ we get
\begin{align}
  \label{eq:23}
\mu_V\big(c^*_{\mathbf j}(\o)    \var_{B_{U_{\mathbf{j}}}}(f) \big) & \le 2^{(\b-\b^2/2)\theta_q^2(1+\e/2)}\sum_{\substack{z\in
      B_{E_{\mathbf{j}'}}\\ \mathbf{j}'=\mathbf{j}\ \text{or}\ \mathbf{j}'\prec
        \mathbf{j},\, \mathbf j'  \text{ neighbor of } \mathbf j}}\mu_V\big(c^{V}_z\var_z(f)\big).
  \end{align}
 By combining \eqref{eq:22}  and \eqref{eq:23} and using that
 $|E_{\mathbf{j}}|=O(\ell)$ we conclude for $q$ small enough that
\[
    \var_V(f)\le \g(W; q^*)^{-1} \times 2^{(2\b-\b^2)\frac{\theta_q^2}{2}(1+\e)}\cD_V(f)\quad \forall\, f,
\]
and the claim follows from the variational characterization of $\g(V)$.
\end{proof}
The claim together with \eqref{eq:21} finally implies that
$
  \g(V)\ge 2^{-((1-\b)^2/2 +
    2\b-\b^2)\frac{\theta_q^2}{2}(1+\e)},
$
$\forall\, q\le q(\e),$ i.e.\ \eqref{eq:20}.

\subsection{Proof of (iii)}
We already know (cf. Lemma \ref{lem:East}) that $\phi(\b;d)\le 1\
\forall \b.$ Fix now $\b\ge 1$ and consider the $(\b,1)$-outstretched one dimensional box $\L=\cup_{k=0}^{\lfloor
2^{\b \theta_q}\rfloor}\{k\,\mathbf e_1\}$. The only subset $V\subset
\L$
containing the origin and $x_\L$ and such that $\g(V)>0$ is
$V=\L$. But 
$\g(\L)=2^{-\frac{\theta_q^2}{2}(1+o(1))}$ (see again Lemma \ref{lem:East}) so that $\phi(\b;d)\ge 1$.
\appendix
\section{Appendix}
 \label{sec:appendix}
We first state three results which have been used
quite often in the previous sections and then we prove Lemmas \ref{lemma:hprime_h0} and \ref{lemma:K}.
\begin{lemma}
  \label{lem:2 blocks}
Consider two finite sets $V_1,V_2\subset \bbZ^d_+$ such that $V_1\ni
0$ and $\exists\, z\in V_1$ such that $z+\mathbf e\in V_2$ for some
$\mathbf e\in \cB$ and the East chain on $V_2$ with boundary condition
$\s$ having a unique vacancy
at $z$ is ergodic. Then $\g(V_1\cup V_2)\ge \frac q4 \min\big(\g(V_1),\g^\s(V_2)\big)$.    
\end{lemma}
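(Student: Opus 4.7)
The plan is to apply the standard variance decomposition
\[
\var_\mu(f) = \mu_{V_1}\big(\var_{V_2}(f)\big) + \var_{V_1}\big(\mu_{V_2}(f)\big),\qquad \mu = \mu_{V_1 \cup V_2},
\]
and to bound each summand by a multiple of $\cD_{V_1 \cup V_2}(f)$. The second term is the easy one: $\mu_{V_2}(f)$ depends only on $\o|_{V_1}$, for $x \in V_1$ one has $c_x^{V_1} \le c_x^{V_1 \cup V_2}$ (the maximal boundary condition is dominated by any condition coming from $V_2$), and Jensen's inequality $\mu_{V_2}(\var_x(f)) \ge \var_x(\mu_{V_2}(f))$ gives $\cD_{V_1}(\mu_{V_2}(f)) \le \cD_{V_1 \cup V_2}(f)$. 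The Poincaré inequality for the East chain on $V_1$ then yields $\var_{V_1}(\mu_{V_2}(f)) \le \g(V_1)^{-1}\cD_{V_1 \cup V_2}(f)$.

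For the first summand, write $h(\o|_{V_1}) := \var_{V_2}(f)$, let $A = \{\o_z = 0\}$, and split $\mu_{V_1}(h) = \mu_{V_1}(\id_A h) + \mu_{V_1}(\id_{A^c} h)$. On the event $A$, the configuration $\o|_{V_1}$ carries at least as many vacancies on $\partial_\downarrow V_2$ as $\s$ does, so by monotonicity of the spectral gap in the boundary condition $\g^{\o|_{V_1}}(V_2) \ge \g^\s(V_2)$; since the constraints for $x \in V_2$ agree in the two chains (both equal $c_x^{V_1 \cup V_2}$), one obtains $\mu_{V_1}(\id_A h) \le \g^\s(V_2)^{-1}\cD_{V_1 \cup V_2}(f)$. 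The main obstacle is the event $A^c$: there is no vacancy at $z$, so the chain on $V_2$ with the induced boundary may fail to be ergodic and the direct Poincaré inequality is unavailable. I bypass this by letting $\o^{(i)}$ denote the configuration agreeing with $\o$ on $V_1 \setminus \{z\}$ with $\o_z = i$, and using the elementary inequality
\[
\var_{V_2}\big(f(\o^{(1)},\cdot)\big) \le 2\var_{V_2}\big(f(\o^{(0)},\cdot)\big) + 2\mu_{V_2}\big((f(\o^{(1)},\cdot) - f(\o^{(0)},\cdot))^2\big)
\]
together with the pointwise identity $(f(\o^{(1)}) - f(\o^{(0)}))^2 = \var_z(f)/(pq)$. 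After integrating with $p\,\mu_{V_1\setminus\{z\}}$ this produces
\[
\mu_{V_1}(h) \le \tfrac{2-q}{q}\mu_{V_1}(\id_A h) + \tfrac{2}{q}\mu(\var_z(f)).
\]

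It remains to absorb $\mu(\var_z(f))$ into the Dirichlet form. Jensen's inequality $\mu_{V_1\setminus\{z\}}(\mu_z(f)^2) \ge \mu_{V_1}(f)^2$ shows $\mu(\var_z(f)) \le \mu_{V_2}(\var_{V_1}(f))$; the Poincaré inequality on $V_1$ applied to $f(\cdot,\o|_{V_2})$ for each fixed $\o|_{V_2}$, combined once more with $c_x^{V_1} \le c_x^{V_1 \cup V_2}$ for $x \in V_1$, then upgrades this to $\mu(\var_z(f)) \le \g(V_1)^{-1}\cD_{V_1 \cup V_2}(f)$. Collecting all the estimates,
\[
\var(f) \le \left[\tfrac{2+q}{q}\g(V_1)^{-1} + \tfrac{2-q}{q}\g^\s(V_2)^{-1}\right]\cD_{V_1 \cup V_2}(f) \le \tfrac{4}{q \min(\g(V_1),\g^\s(V_2))}\cD_{V_1 \cup V_2}(f),
\]
which is precisely the announced bound. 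The prefactor $1/q$ arises from the mismatch between the event $\{\o_z = 0\}$ of probability $q$ and its complement, and reflects the bottleneck nature of the vacancy needed at $z$ to activate the $V_2$-dynamics.
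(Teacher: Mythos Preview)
Your proof is correct (for disjoint $V_1,V_2$) but follows a genuinely different route from the paper. The paper introduces an auxiliary \emph{2-block chain} on $\O_V$: with rate one resample $\o\restriction_{V_1}$ from $\mu_{V_1}$, and with rate one resample $\o\restriction_{V_2}$ from $\mu_{V_2}$ provided $\o_z=0$. The spectral gap of this constrained block chain is $\ge q/2$ by \cite[Proposition~4.4]{CMRT}, which directly yields
\[
\var_V(f)\le \tfrac{2}{q}\,\mu_V\big(\var_{V_1}(f)+\id_{\{\o_z=0\}}\var_{V_2}(f)\big);
\]
each inner variance is then bounded via the East Poincar\'e inequality on $V_1$ (resp.\ on $V_2$ with boundary $\s$), and the constraints are compared by monotonicity. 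You instead start from the law of total variance and deal with the problematic term $\mu_{V_1}(\id_{A^c}\var_{V_2}(f))$ by an explicit ``flip at $z$'' comparison, trading a factor $1/q$ for an extra $\mu(\var_z(f))$ which is then absorbed via the Poincar\'e inequality on $V_1$. Your argument is more self-contained (it avoids the external black box for the block-chain gap), while the paper's is shorter once that result is quoted; both arrive at the same constant $q/4$.

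One small caveat: your variance decomposition $\var_\mu(f)=\mu_{V_1}(\var_{V_2}(f))+\var_{V_1}(\mu_{V_2}(f))$ tacitly assumes $V_1\cap V_2=\emptyset$, whereas the paper's block-chain argument also covers the overlapping case (this is precisely where the extra factor of $2$ arises, turning $q/2$ into $q/4$). All applications of the lemma in the paper are to disjoint sets, so this does not affect any downstream result.
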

\begin{proof}  Let $V=V_1\cup V_2$ and consider the 
  $2$-block chain on $\O_V$, reversible w.r.t.\ $\mu_V,$:
  \begin{enumerate}[(i)]
  \item with rate one $\o\restriction_{V_1}$ is resampled from
    $\mu_{V_1}$;
    \item with rate one $\o\restriction_{V_2}$ is resampled from
    $\mu_{V_2}$ iff $\o_z=0$.
  \end{enumerate}
  The block chain has Dirichlet form
  \[
    \cD_V^{\rm block}(f)= \mu_V\big(\var_{V_1}(f)+
 \id_{\{\o_z=0\}}\var_{V_2}(f)\big)
 \]
and spectral gap $\g_V^{\rm block}(f)= 1-\sqrt{1-q}\ge q/2$
(see \cite[Proposition 4.4]{CMRT}).
Therefore, the Poincar\'e inequality for the block chain reads
\begin{equation}
  \label{eq:17}
  \var_{V}(f)\le 2/q\,\mu_V\big(\var_{V_1}(f)+
 \id_{\{\o_z=0\}}\var_{V_2}(f)\big) \quad \forall\, f.
\end{equation}
The definition of $\g(V_1)$ and $\g^\s(V_2)$ implies that
\begin{align}
  \label{eq:18}
  \var_{V_1}(f) &\le \g(V_1)^{-1}\sum_{x\in V_1}\mu_{V_1} \big(c_x^{V_1,1}\var_x(f)\big),\\
\label{eq:19} \id_{\{\o_z=0\}}  \var_{V_2}(f) &\le \g^\s(V_2)^{-1}\sum_{x\in
V_2} \id_{\{\o_z=0\}} \mu_{V_2}\big(c_x^{V_2,\o\restriction_{\partial_{\downarrow}V_2}}\var_x(f)\big).
\end{align}
It is now sufficient to insert the r.h.s.\ of \eqref{eq:18}, \eqref{eq:19} into the
r.h.s.\ of \eqref{eq:17} and use
the fact that both $c_x^{V_1,1}, x\in V_1,$ and $\id_{\{\o_z=0\}}
c_x^{V_2,\o\restriction_{\partial_{\downarrow}V_2}}, x\in V_2,$ are
dominated by the constraint $c_x^{V,1}$ 
to conclude that
\[
  \var_{V}(f)\le 4/q\times \max\big( \g(V_1)^{-1},
  \g^\s(V_2)^{-1}\big) \cD_V^{1}(f) \quad \forall\, f,
  \]
where the additional factor of $2$ appears if $V_1\cap V_2\neq \emptyset$.
\end{proof}
\begin{lemma}[{\cite[Lemma 3.1 and eq. (2.9)]{CFM2}}]
  \label{lem:East}
  Consider the box $\L$ with side lengths $(L_1,\dots,L_d)$ and let
  $n\in \bbN$ be such that $\max_i L_i\in (2^{n-1},2^n]$. Then, as
  $q\to 0$
  \[
    \g(\L)=
    \begin{cases}
      2^{-(n\theta_q -\binom{n}{2})(1+o(1))}& \text{ if $n\le
        \theta_q$}\\
      2^{-\frac{\theta_q^2}{2}(1+o(1))} & \text{otherwise.}
    \end{cases}
    \]
\end{lemma}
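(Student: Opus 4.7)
The plan is to reduce the $d$-dimensional bound to the one-dimensional East chain on an interval of length $L_\star := \max_i L_i$, for which the asymptotic is a classical result going back to Aldous and Diaconis \cite{Aldous}. The upper bound is immediate from Remark \ref{rem:1}: the projection of the East chain on $\L$ to the longest coordinate axis is the 1D East chain on an interval of length $L_\star$, and since the variational formula for the spectral gap of the projection is taken over a subclass of the test functions on $\O_\L$ (those factoring through the projection), one has $\g(\L) \le \g(\{0,\dots,L_\star\})$. For the matching lower bound one exploits that in $d\ge 2$ the constraint at $x$ is weaker than in 1D (it can be satisfied by a vacancy at any of the $d$ neighbors $x-\mathbf e$, $\mathbf e \in \cB$); a 2-block argument along the longest side, iterated in the spirit of Lemma \ref{lem:2 blocks} and absorbing the $O(1)$ losses per step into the $(1+o(1))$ error, reduces the $d$-dimensional Dirichlet form to a 1D Dirichlet form along that axis.

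For the 1D case with $L \in (2^{n-1},2^n]$ and $n \le \theta_q$, the analysis proceeds by recursive bisection. Writing $\L_k = \{0,\dots,2^k-1\}$, a 2-block decomposition at the midpoint gives a recursion essentially of the form
\[
\g(\L_k)^{-1} \le \frac{2(1+o_q(1))}{q}\, \g(\L_{k-1})^{-1},
\]
whose naive iteration from $\g(\L_0)=1$ yields only $\g(\L_{2^n}) \ge 2^{-n\theta_q(1+o(1))}$. The sharper $\binom{n}{2}$ correction arises from a refined renormalization in which, once a vacancy has been produced at the midpoint, the two half-intervals can be relaxed in parallel rather than sequentially, exploiting that their Dirichlet forms act on disjoint coordinates; this yields a combinatorial/entropic gain of roughly a factor $2^k$ at the $k$-th recursive step, telescoping to the claimed lower bound $\g(\L_{2^n}) \ge 2^{-(n\theta_q - \binom{n}{2})(1+o(1))}$. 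The matching upper bound is obtained by a capacity/bottleneck argument: one exhibits an explicit dyadic "ladder" set $A_n \subset \O_{\L_{2^n}}$ that every legal path from the all-particle configuration to a configuration with $\o_{2^n-1}=0$ must cross, and verifies $\mu(A_n) \le 2^{-(n\theta_q - \binom{n}{2})(1+o(1))}$, from which a Cheeger-type inequality yields the matching upper estimate on $\g(\L_{2^n})$.

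For the saturation regime $n > \theta_q$, the upper bound follows by monotonicity of $\g(\L_L)$ in $L$ evaluated at $L = 2^{\lfloor \theta_q \rfloor}$, while the lower bound follows by restricting attention to a subinterval of length $2^{\lfloor \theta_q \rfloor}$ and applying the previous case, giving $\g(\L) \ge 2^{-\theta_q^2/2\,(1+o(1))}$. Intuitively, once $L \gtrsim 1/q = 2^{\theta_q}$ there are typically several vacancies in the interval at equilibrium, and the relaxation is no longer bottlenecked by the transport of a single vacancy across the whole interval. The main technical obstacle throughout is the precise combinatorial accounting that delivers the $\binom{n}{2}$ gain in the exponent: it requires a careful enumeration of the minimal bottleneck configurations at each recursive scale, matched by a tight Dirichlet-form estimate at the same scale. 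This is exactly the content of \cite[Lemma 3.1 and eq.\ (2.9)]{CFM2}, building on \cite{Aldous,CMRT}.
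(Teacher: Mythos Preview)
The paper does not prove this lemma: it is listed in the Appendix as a known result, with the citation to \cite[Lemma 3.1 and eq.~(2.9)]{CFM2} appearing already in the lemma's heading and no argument supplied. Since your proposal also concludes by deferring to that same reference, you are ultimately aligned with the paper's treatment.

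That said, several steps in your heuristic reconstruction of the \cite{CFM2,CMRT} arguments misidentify the mechanisms and would not go through as written. The $\binom{n}{2}$ correction in the one-dimensional lower bound does \emph{not} arise from ``relaxing the two half-intervals in parallel rather than sequentially''---that buys at most a constant factor per recursive step. The gain is entropic: at depth $k$ the facilitating event in the 2-block recursion is ``there is a vacancy \emph{somewhere} in the left block of size $2^{k-1}$'', which has probability $\approx q\,2^{k-1}$ rather than $q$, so the recursion reads $\g(\L_k)^{-1}\le O(1)\,(q\,2^{k-1})^{-1}\g(\L_{k-1})^{-1}$ and telescoping $\prod_{k=1}^n (q\,2^{k-1})^{-1}=q^{-n}2^{-\binom{n}{2}}$ produces the exponent. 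Your saturation lower bound is also backwards: restricting to a subinterval of length $2^{\lfloor\theta_q\rfloor}$ gives an \emph{upper} bound on $\g(\L)$, since the spectral gap is non-increasing in the volume; the correct lower bound for $n>\theta_q$ is simply $\g(\L)\ge \g_{d=1}$, the infinite-volume one-dimensional gap from \cite{CMRT}. Finally, ``iterating Lemma~\ref{lem:2 blocks} along the longest side'' for the $d$-to-$1$ reduction is either too lossy (if you mean $L_\star$ iterations, the accumulated $(q/4)^{L_\star}$ is certainly not absorbable into $(1+o(1))$) or too vague to match what is actually done in \cite{CFM2}.
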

\begin{lemma}[{\cite[Lemma 3.6]{CFM2}}]
\label{lem:trick}
 Let $\L_x=\L+x$ where $\L$ is a box of $\bbZ^d_+$ and $x$ an
 arbitrary vertex. Let $V\subset
 \L_x$ be such that $x\prec V$ and let
 $A=\{\exists z\in \L_x, z\prec
  V: \o_z=0\}$. Then, 
\[
\mu_{\L_x}\big(\id_A\var_{V}(f)\big)\le \g(\L)^{-1}\cD_{\L_x}(f).
\]
\end{lemma}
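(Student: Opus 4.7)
The plan is to condition on $\o\restriction_{\L_x\setminus V}$, apply a Poincar\'e inequality to the East chain on $V$ fibrewise, and then dominate the resulting Dirichlet form by $\cD_{\L_x}(f)$. The first observation is that any vertex $z$ witnessing the event $A$ satisfies $z\prec V$ and hence $z\in \L_x\setminus V$, so $\id_A$ is measurable with respect to $\o\restriction_{\L_x\setminus V}$ and
\begin{equation*}
\mu_{\L_x}\bigl(\id_A\var_V(f)\bigr)=\mu_{\L_x\setminus V}\Bigl(\id_A\,\var_V\bigl(f\tc \o\restriction_{\L_x\setminus V}\bigr)\Bigr).
\end{equation*}

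For each $\eta$ in the support of $\mu_{\L_x\setminus V}$ with $\eta\in A$, let $\s_\eta$ denote the boundary condition for $V$ induced by $\eta$ on $\partial_{\downarrow}V\cap \L_x$, completed by $1$ on $\partial_{\downarrow}V\setminus \L_x$. Since $\s_\eta$ carries a vacancy at some vertex $\prec V$, the East chain on $V$ with boundary $\s_\eta$ is ergodic, and the heart of the argument is the spectral-gap bound $\g_V(\s_\eta)\ge \g(\L)$. I would establish it by translating by $-x$ so that $V$ becomes $\tilde V:=V-x\subset\L$, extending any non-constant $g\colon\O_{\tilde V}\to\bbR$ trivially to $\tilde g\colon\O_\L\to\bbR$ depending only on the $\tilde V$-coordinates, and applying $\var_\L(\tilde g)\le \g(\L)^{-1}\cD_\L(\tilde g)$. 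A direct computation then shows that the Dirichlet sum collapses onto $y\in\tilde V$ and, after integrating out the coordinates in $\L\setminus\tilde V$ against $\mu_{\L\setminus \tilde V}$ restricted to configurations compatible with $\s_\eta$, reproduces $\cD_{\tilde V}^{\s_\eta}(g)$ up to a common Bernoulli prefactor that cancels the corresponding factor in $\var_\L(\tilde g)=\var_{\tilde V}(g)$. The fibrewise Poincar\'e inequality therefore reads
\begin{equation*}
\var_V\bigl(f\tc \eta\bigr)\le \g(\L)^{-1}\sum_{y\in V}\mu_V\Bigl(c_y^{V,\s_\eta}\var_y\bigl(f(\eta,\cdot)\bigr)\Bigr).
\end{equation*}

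To conclude, I would multiply by $\id_A$, integrate against $\mu_{\L_x\setminus V}$, and use the pointwise domination $\id_A\,c_y^{V,\s_\eta}(\o)\le c_y^{\L_x,1}(\eta\cdot\o)$ valid for every $y\in V$: the two constraints read the same $\eta$-values on $\partial_{\downarrow}V\cap \L_x$, while the maximal boundary on $\partial_{\downarrow}\L_x\cap \partial_{\downarrow}V$ can only suppress legal updates in $c_y^{V,\s_\eta}$. Summing over $y\in V$ and extending the sum to the full box gives
\begin{equation*}
\mu_{\L_x}\bigl(\id_A\var_V(f)\bigr)\le \g(\L)^{-1}\sum_{y\in \L_x}\mu_{\L_x}\bigl(c_y^{\L_x,1}\var_y(f)\bigr)=\g(\L)^{-1}\cD_{\L_x}(f).
\end{equation*}
The main obstacle is the spectral-gap comparison $\g_V(\s_\eta)\ge\g(\L)$: the trivial-extension argument requires a careful matching of both the variance and the Dirichlet form of the East chain on $\tilde V$ with the vacancy-carrying boundary $\s_\eta$, which is where the oriented nature of the constraint and the assumption $x\prec V$ (guaranteeing that $\tilde V$ sits inside $\L$ with the right alignment) enter decisively.
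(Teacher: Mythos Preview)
The paper does not give a proof of this lemma; it is quoted from \cite[Lemma~3.6]{CFM2}. Your proposal, however, has a genuine gap at its central step.

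Your fibrewise argument rests on the comparison $\g_V(\s_\eta)\ge\g(\L)$, and this is \emph{false} in general. The event $A$ only guarantees a vacancy at some $z\in\L_x$ with $z\prec V$; nothing forces $z$ to lie on $\partial_{\downarrow}V$. Take $d=1$, $x=0$, $\L=\{0,1,2,3\}$, $V=\{3\}$ and $\eta=(\eta_0,\eta_1,\eta_2)=(0,1,1)$. Then $\eta\in A$ (witness $z=0$), yet $\partial_{\downarrow}V=\{2\}$ and $\s_\eta$ puts a particle there, so the East chain on $V$ with boundary $\s_\eta$ is frozen and $\g_V(\s_\eta)=0<\g(\L)$. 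Your trivial-extension argument cannot repair this: once you apply $\var_\L(\tilde g)\le\g(\L)^{-1}\cD_\L(\tilde g)$, the Dirichlet form on the right carries an \emph{average} of $c_y^\L$ over $\o_{\L\setminus\tilde V}$, and there is no ``common Bernoulli prefactor'' converting that average into the fixed-boundary constraint $c_y^{\tilde V,\s_\eta}$. The oriented structure and the hypothesis $x\prec V$ do not help here, because the obstruction is geometric: the vacancy at $z$ is simply invisible to the chain restricted to $V$.

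The fix is to run the Poincar\'e inequality on the \emph{whole} translated box rather than on $V$. By translation invariance, the East chain on $\L_x$ with its corner $x$ unconstrained has spectral gap exactly $\g(\L)$; write $\tilde\cD_{\L_x}$ for its Dirichlet form. Then
\[
\mu_{\L_x}\bigl(\id_A\var_V(f)\bigr)\le \mu_{\L_x}\bigl(\var_V(f)\bigr)\le \var_{\L_x}(f)\le \g(\L)^{-1}\tilde\cD_{\L_x}(f),
\]
the middle inequality being the conditional-variance decomposition. The point is that the full-box chain can propagate influence from the unconstrained corner $x$ through $z$ to every vertex of $V$, which your $V$-restricted chain cannot. (With the paper's convention $\cD_{\L_x}=\cD_{\L_x}^{1}$ one has $\tilde\cD_{\L_x}(f)=\cD_{\L_x}(f)+\mu_{\L_x}(\var_x(f))$; this extra single-site term is harmless in every application here, as it is absorbed when the local Dirichlet forms are summed into the global one.)
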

\paragraph{\bf Proof of Lemma \ref{lemma:hprime_h0}}
Clearly, $\l\sim \cH(0)\Rightarrow \l\sim \cH'(0)$. 
Suppose now that $\l\sim \cH'(0),$ fix $\kappa\ge 1, \e>0$ and let $\L$ be a
  $(0,\kappa,\theta_q)$-outstretched box with side lengths
$(L_1,\ldots,L_d)$. Let $N=\min_jL_j$ and for any $i\in [d]$ choose a partition
of the discrete interval $\{0,1,\dots,L_i\}$ into $N+1$ discrete intervals,
        $B_0^{(i)},\dots, B_{N}^{(i)}$, ordered from left to right,
        each one
        containing at least one vertex and at most $\k+1$ vertices. For $\mathbf j\in \L_B:=\{0,\dots,N\}^d$ write $B_{\mathbf
        j}=\prod_{i=1}^d B^{(i)}_{j_i}$ so that
      $\cup_{\mathbf j\in \L_b}B_{\mathbf j}=\L.$ Furthermore, let $\Omega^*_{\mathbf{j}}:=\Omega_{B_{\mathbf{j}}},\ \mu^*_{\mathbf{j}}:=\mu_{B_{\mathbf{j}}}$
        and choose as facilitating event $G_{\mathbf
          j}$ the event that the smallest vertex in $B_{\mathbf{j}}$ in the
        $\prec$-ordering (for example the lowest-left corner if $d=2$) has a
        vacancy. Clearly $\mu^*_{\mathbf{j}}(G_{\mathbf j})=q\
        \forall\,\mathbf{j}\in \Lambda_B,$ i.e.\ $q^*=q$. Recall now Definition \ref{def:*East chain}. 
Using $\l\sim \cH'(0)$
      there exists $V^*\subset \L_B$ containing the origin and
      $x_{\L_B}$ such that
      \[
        \g^*(V^*)=\g(V^*;q^*)=\g(V^*)\ge 2^{-\l (1+\e/2)\frac{\theta_q^2}{2}}.
      \]
      Hence, if we set $V=\cup_{\mathbf j\in V^*}B_{\mathbf j}$ and
       write $\var^*$ for the variance w.r.t.\ $\mu^*$ 
      we get
      \begin{align*}
               \var_{V}(f)= \var^*_{V^*}(f)
                \le 2^{\l (1+\e/2)\frac{\theta_q^2}{2}}
                \sum_{\mathbf{j}\in V^*}
                \mu_V(c_{\mathbf{j}}^*\var_{B_{\mathbf j}}(f)).
        \end{align*}
Using Lemma \ref{lem:trick}, \ref{lem:East} and
        the fact that each box $B_{\mathbf j}$ contains at most
        $\kappa^d$ vertices, we get that the r.h.s.\ above is not larger
        than
        $
          2^{\l (1+\e/2)\frac{\theta_q^2}{2}}\,
          2^{O(\kappa^d)\theta_q}\,\cD_\L(f)
         $ so that
         \[
           \var_{V}(f)\le 2^{\l (1+\e/2)\frac{\theta_q^2}{2}}\,
           2^{O(\kappa^d)\theta_q}\cD_V(f)\le 2^{\l (1+\e)\frac{\theta_q^2}{2}}\cD_V(f).
         \]
 Hence, for any $q$ small enough depending on $(\e,\kappa)$,
 $\g(V)\ge  2^{-\l (1+\e)\frac{\theta_q^2}{2}}$ implying that
         $\l\sim \cH(0)$.
\qed
\\

\paragraph{\bf Proof of Lemma \ref{lemma:K}}
Recall Definition \ref{def:*K} and consider a partition ${\{Q_{x}\}}_{x\in V^K}$ of
$(EV^K\setminus V^K )\cap \L$  such that $Q_{x}\subset
E_{x}\ \forall x$. The important point here is that the sets
$\{Q_x\}_{x\in V^K}$ are mutually disjoint, a feature not necessarily shared by
the sets $\{E_x\setminus \{x\}\}_{x\in V^K}$ (see
\cref{fig:K2}). Instead of the *Knight chain on $\O^*_{EV^K\cap \L}$
consider now the (very closely related) chain which at any legal update of
the Knight chain at $x\in V^K$ resamples the whole configuration
in $x\cup Q_x$. This chain can be viewed as a new Knight chain on
$\O^*_{V^K}$ with new parameters $\tilde \O^*_x=\otimes_{z\in x\cup
  Q_x} \O_z^*, \tilde \mu_x^*=\otimes_{z\in x\cup
  Q_x}\mu_z^*, x\in V^K,$ and the same facilitating events as the
original Knight chain. Of course $\otimes_{x\in
  V^K}(\tilde \O^*_x , \tilde \mu^*_x)=
(\O^*_{V^K}, \mu^*_{V^K}).$ Hence, the spectral gap of the new chain, as discussed after
Definition \ref{def:*East chain}, coincides with $\g(V;q^*)$ and $\forall \, f$
\begin{align*}
  \var^*_{V_K}(f)&\le \g(V;q^*)^{-1}\sum_{x\in
    V^K}\mu^*_{V^K}\big(K_x\var^*_{x\cup Q_x}(f)\big)
  \\
  &\le \g(V;q^*)^{-1}\sum_{x\in
    V^K}\mu^*_{V^K}\big(K_x\var^*_{E_x}(f)\big).
\end{align*}
where $K_x$ is the Knight constraint at $x$. Above we used the
fact that   $K_x$ does not depend on $\{\o_z\}_{z\in x\cup Q_x}$
and that $\mu^*_{E_x}\big(\var^*_{x\cup
    Q_x}(f)\big)\le \var^*_{E_x}(f).$ The sum in the
r.h.s.\ above is the Dirichlet form of the *Knight chain and
we conclude that its spectral gap is at least $\g(V;q^*)$. The reverse
inequality follows immediately by projection onto the variables
$\eta_x=1-\id_{\{\o_x\in G^*_x\}}, x\in V^K,$ where $G^*_x$ is the
facilitating event.  
\qed






\end{document}